\newcommand{\lcm}{\mathop{\mathrm{lcm}}}
\newtheorem{thm}{Theorem}[section]
\newtheorem{cor}[thm]{Corollary}
\newtheorem{lem}[thm]{Lemma}
\newtheorem{prop}[thm]{Proposition}
\newtheorem{defn}[thm]{Definition}
\newtheorem {thmS}{Theorem }
\newtheorem*{thmA}{Theorem A}
\newtheorem*{thmB}{Theorem B}
\newtheorem*{corC}{Corollary C}
\newtheorem*{corD}{Corollary D}
\newtheorem{proc}[thm]{}
\newtheorem*{rem}{Remark}
\numberwithin{equation}{section}
\def\pn{\par\noindent}
\begin{document}
% !TeX spellcheck = en_US
%------------------------------------------------------------------------------------%

\vspace{1.3 cm}
%------------------------------------------------------------------------------------%
\title{On some graphs  associated with the finite alternating groups}
\author{D.~Bubboloni, Mohammad~A.~Iranmanesh and S.~M.~Shaker}

\thanks{{\scriptsize
\hskip -0.4 true cm MSC(2010): Primary: 05C25; Secondary: 20B30.
\newline Keywords: Power graph, Order graph, Power type graph, Alternating group.\\
}}
\maketitle
%------------------------------------------------------------------------------------%
%------------------------------------------------------------------------------------%
\begin{abstract}  Let $P_0(A_n), \widetilde{P}_0(A_n), P_0(\mathcal{T}(A_n))$ and
$\mathcal{O}_0(A_n)$ be respectively the proper power graph, the proper quotient power graph,
the proper power type graph and  the proper order graph of the alternating group $A_n$,
for $n\geq 3.$ We determine the number of the components of  those graphs.
In particular, we prove that  the power graph $P(A_n)$ is $2$-connected if and only
if the power type graph $P(\mathcal{T}(A_n))$ is $2$-connected, if and only
if either $n = 3$ or none of  $n, n-1, n-2, \frac{n}{2}$ and ${\frac{n-1}{2}}$ is
a prime. We also give some information on the properties of those components.
\end{abstract}

\vskip 0.2 true cm
%------------------------------------------------------------------------------------%
\pagestyle{myheadings}
\markboth{\rightline {\scriptsize  Bubboloni, Iranmanesh and Shaker}}
         {\leftline{\scriptsize  }}
\bigskip

%----------------------------------------------------------------------------------
\section{ Introduction }
\vskip 0.4 true cm

In the recent literature in group theory there are many examples of the usefulness of associating a graph $X(G)$ with every finite group $G$, requiring that isomorphic groups determine isomorphic corresponding graphs. Classic choices for $X(G)$ are: the prime graph, the prime graph for conjugacy class sizes or for complex irreducible characters degrees, the commuting graph, the non-commuting graph, the power graph and, of course, Cayley graphs. The process of passing from $G$ to $X(G)$
reduces the complexity, by focussing on some aspects of $G$ and allows the use of methods  from graph theory.
If the reduction of complexity is weak, little information about $G$ is lost and one can recover the group $G$, up to isomorphism, from the graph $X(G)$. This very desirable situation of course depends on the classes of groups under consideration. To be more precise, let $\mathcal{F}$ be the class of the finite groups and let $\mathcal{ U}$ and $ \mathcal{ V}$ be subclasses of $ \mathcal{ F}$ such that $\mathcal{ U}\subseteq \mathcal{ V}\subseteq\mathcal{F}$. Assume that, for every $G\in \mathcal{ V},$ a graph $X(G)$ is given.
Define the class of groups in $\mathcal{ U}$ that are $X$-{\it recognisable} in $\mathcal{ V}$ by
$$\mathcal{R}_{X}(\mathcal{U},\mathcal{V} )=\{G\in \mathcal{U} : \forall H\in\mathcal{V},\  X(H)\cong X(G) \Rightarrow H\cong G \}.$$
We can measure the level of adherence of $X$  to $ \mathcal{U}$, with respect to $\mathcal{V}$,  by the broadness of $\mathcal{R}_{X}(\mathcal{U},\mathcal{V} )$ with respect to $\mathcal{U}$. % If $\Gamma_1(G)$ and $\Gamma_2(G)$ are two graphs associated to $G\in \mathcal{U}$, then we say that $\Gamma_1$ is more adherent to $\mathcal{U},$  with respect to $\mathcal{V}$, than $\Gamma_2$ if $\mathcal{R}_{\Gamma_1}(\mathcal{U},\mathcal{V} )\supseteq \mathcal{R}_{\Gamma_2}(\mathcal{U},\mathcal{V} ).$
The best level of adherence is realized  for those choices of $X$ such that $\mathcal{R}_{X}(\mathcal{U},\mathcal{V} )=\mathcal{U}$; in that case we say that $X$ recognises $\mathcal{U}$ in $\mathcal{V}$. Frequent applications of that concept in the literature are given when $\mathcal{V}=\mathcal{U}$ or $\mathcal{V}=\mathcal{F}$. If $\mathcal{R}_{X}(\mathcal{U},\mathcal{U} )=\mathcal{U}$
 we say that $X$ recognizes $\mathcal{U}$; if $\mathcal{R}_{X}(\mathcal{U},\mathcal{F} )=\mathcal{U}$ we say that $X$ recognizes $\mathcal{U}$ among the finite groups.
Even though the adherence level is not the best possible, the definition of $X$ is considered to be interesting if $\mathcal{R}_{X}(\mathcal{U},\mathcal{V} )$ contains known classes of  groups, such as nilpotent groups, solvable groups, abelian groups, simple groups and so on.

In this paper, all the considered classes of groups  are included in $\mathcal{F}$ and $G$ always denotes a finite group;  every graph $X=(V_X,E_X)$  is finite, undirected, simple and reflexive. Having a loop on every vertex simplify many arguments in the general theory developed in \cite{bub}
to recover the number and the structure of the components of a graph by the knowledge of the components of its quotient graphs.

We are mainly focused on the \emph{power graph} $P(G),$ that is, the graph
with  vertex set $G$ and having an edge between two vertices if one of them is the power of the other. %Note that, by definition, there is a loop on each vertex. This is a slight variation with respect to the classic definition which does not allow loops. Anyway, it is clear that having loops does not affect the connectivity properties of the power graph.
Many interesting results  on power graphs are collected in the survey \cite{sur}, where a wide list of open problems and references is given. Nowadays, the study of the
power graphs is  far to be completed and the recent literature  shows a growing interest about the graph theoretical properties of $P(G)$. For instance, in
\cite{wang2} it is shown that the power graph has a transitive orientation and  a closed formula for the metric dimension of $P(G)$ is established. In \cite{ wang1},
relying on a fundamental result about  groups with isomorphic
power graphs (\cite[Proposition 1]{cam2}), the full automorphism group of $P(G)$ is described. In many cases
such as \cite{ df, pya,shi,df2,Ma}, those properties are investigated in relation to the group theoretical properties of $G$. In particular,  the groups $G$ for which  $P(G)$ is planar or toroidal or projective are
classified in \cite{df2}; in \cite{Ma} it is presented a characterization of the chromatic number of $P(G)$
and  the groups whose power graphs are uniquely colorable, split or unicyclic are classified.
In our opinion, a motivation for the studying of the power
graph is given by its realizing a high level of adherence to the class of finite groups. Indeed,
by \cite[Theorem 1]{shi} and \cite[Theorem 15]{filo}, we have:

\begin{thmS}\label{first}Let $\mathcal{U}$  be the union of the following classes of  groups:
\begin{itemize}
\item[(1)] the class $\mathcal{S}$ of simple groups ;
\item[(2)] the symmetric groups;
\item[(3)] the automorphism groups of a sporadic simple group;
\item[(4)] the class $\mathcal{C}$ of cyclic groups;
\item[(5)] the dihedral groups;
\item[(6)] the generalized quaternion groups.
\end{itemize}
Then $\mathcal{R}_{P}(\mathcal{F},\mathcal{F} )\supseteq \mathcal{U}$.

%$\mathcal{B}\subseteq \mathcal{R}_{P}(\mathcal{F},\mathcal{F} ).$
\end{thmS}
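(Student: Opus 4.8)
The statement is a compilation: \cite[Theorem 1]{shi} yields the families (1)--(3) and \cite[Theorem 15]{filo} the families (4)--(6), so the plan is only to sketch the mechanism underlying both. The common starting point is \cite[Proposition 1]{cam2}: if $G$ and $H$ are finite groups with $P(G)\cong P(H)$, then the \emph{directed} power graphs of $G$ and $H$ are isomorphic; hence $G$ and $H$ have the same order and, for each positive integer $k$, the same number of elements of order $k$, and more precisely they have the same poset of cyclic subgroups under inclusion, each node labelled by its number of generators. So, after fixing a target group $G\in\mathcal{U}$, everything reduces to showing that these combinatorial data cannot be realised by a finite group not isomorphic to $G$.

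For the cyclic, dihedral and generalized quaternion groups I would exploit the rigidity of that labelled poset. For $\mathbb{Z}_n$ the poset of cyclic subgroups is the divisor lattice of $n$, with the node $d$ carrying $\varphi(d)$ generators, and it has a unique maximal node, of order $n$; since a realising group $H$ has $|H|=n$, it then possesses an element of order $|H|$ and is therefore cyclic. For $D_n$ the $n$ non-central involutions occur in $P(D_n)$ as pairwise non-adjacent vertices each adjacent only to the identity, so $P(D_n)$ is $P(\mathbb{Z}_n)$ with $n$ pendant vertices attached at the identity; recovering $n$, together with the fact that every element outside the cyclic subgroup of order $n$ is an involution, forces $H\cong D_n$. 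For the generalized quaternion $2$-group the unique involution is recovered as the only non-identity vertex lying in every maximal clique of $P(G)$ (the maximal cliques of the power graph of a $p$-group being exactly its maximal cyclic subgroups), which together with the $2$-power order statistics pins $H$ down. These are precisely the verifications carried out in \cite[Theorem 15]{filo}.

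For the simple groups, the symmetric groups and the automorphism groups of sporadic simple groups the argument is deeper and rests on the classification of finite simple groups: one shows that each such $G$ is already determined among finite groups by the numerical part of $P(G)$ --- its set of element orders together with their multiplicities --- except in the configurations in which another finite group shares these data, and those residual cases are then resolved using the extra information carried by the labelled cyclic-subgroup poset, which $P(G)$ also encodes. I expect the main obstacle to lie exactly here: dealing uniformly, across all simple groups, with those not recognisable by their spectrum alone --- for instance certain isospectral but non-isomorphic pairs of classical groups --- where one must genuinely use that the power graph remembers strictly more than the spectrum. Working this out is the technical core of \cite[Theorem 1]{shi}.
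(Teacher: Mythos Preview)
Your proposal is correct and matches the paper's approach exactly: the paper does not prove this statement at all but simply attributes it to \cite[Theorem 1]{shi} for classes (1)--(3) and \cite[Theorem 15]{filo} for classes (4)--(6), which is precisely what you do. Your additional sketch of the mechanism via \cite[Proposition 1]{cam2} and the case analyses is extra explanatory material that the paper itself omits.
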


 Theorem \ref{first} says that the power graph recognizes $\mathcal{U}$ among the finite groups. In particular, as a very remarkable fact, the power graph recognizes $\mathcal{S}$ among the finite groups. On the other hand, $\mathcal{R}_{P}(\mathcal{F},\mathcal{F} )\neq \mathcal{F}$. Indeed it is easily seen that  $\mathcal{R}_{P}(\mathcal{F},\mathcal{F} )$ has empty intersection with the class of $p$-groups $G$, with $|G|\geq p^3$ and $\mathrm{Exp}(G)=p$. First of all  note that any $p$-group $G$, with $\mathrm{Exp}(G)=p$ admits a partition given by its distinct subgroups.
Thus $P(G)$ is formed by $\frac{|G|-1}{p-1}$ complete graphs on $p-1$ vertices and by the edges $\{x,1\}$, for $x\in G.$ In particular, the structure of $P(G)$ depends only on the size of $G.$ Now assume that a certain $p$-group  $G$ of exponent $p$, with $|G|\geq p^3$, is recognizable among the finite groups. Pick a
 $p$-group  $H$ of exponent $p$ with $|G|=|H|$, chosen such that only one among $G$ and $H$ is abelian. Then $P(G)\cong P(H)$ while $G\not \cong H,$ a contradiction.

The exact description of $\mathcal{R}_{P}(\mathcal{F},\mathcal{F} )$ is, at the best of our knowledge, still wide open.  In particular, does $\mathcal{R}_{P}(\mathcal{F},\mathcal{F} )$ contains  the full class of  the almost simple groups?

The previous example also shows that if $\mathcal{A}$ is the class of the abelian groups, then $\mathcal{R}_{P}(\mathcal{A},\mathcal{F} )\subsetneq \mathcal{A}$. Note instead that, by \cite[Theorem 1]{cam1}, the power graph recognizes the abelian groups, that is $\mathcal{R}_{P}(\mathcal{A},\mathcal{A} )=\mathcal{A}.$ It follows that $\mathcal{R}_{P}(\mathcal{A},\mathcal{F} )$ contains the $p$-groups of order $p^2$ and so, using Theorem  \ref{first},  $\mathcal{C}\subsetneq\mathcal{R}_{P}(\mathcal{A},\mathcal{F} )\subsetneq \mathcal{A}$.  The exact determination of the class $\mathcal{R}_{P}(\mathcal{A},\mathcal{F} )$ is a further open problem.

The fact that the power graph  recognizes $\mathcal{S}$ among the finite groups, indicates that it is possible to shed light on simple groups through the power graph. We know that if $G\in \mathcal{S}$ and $H\in \mathcal{F}$ are such that $P(H)\cong P(G)$, then $H\cong G.$ To reach an effective control of being $P(H)\cong P(G)$, when $G\in \mathcal{S}$ is given, we need a deep knowledge of the graph theoretical properties of $P(G).$
Among those, connectivity is one of the most natural. Since $P(G)$ is always connected, the focus is on $2$-connectivity. Recall that a graph $X$ is called $2$-connected if, for every $x\in V_{X}$, the $x$-deleted subgraph $X- x$  is connected. It is immediately checked that $P(G)$ is $2$-connected if and only if the  \emph{proper power graph} $P_0 (G)$, defined as the $1$-deleted subgraph of $P (G),$  is connected. The study of $2$-connectivity of $P(G)$ for $G\in \mathcal{S}$, begun with a question posed by Pourgholi, Yousefi-Azari and Ashrafi \cite[Question 13]{pya} about the existence of finite simple groups with $2$-connected power graphs. In \cite{aa} some simple groups are analyzed and for all of them it is shown that the power graph is not $2$-connected. We believe that it could be interesting to find  the number $c_0(G)$ of components of $P_0 (G)$ for $G\in \mathcal{S}$, the aim being a grouping of simple groups having the same  $c_0(G)$.
This research echoes the counting of the components of the prime graph for simple groups in \cite{wi}, and could lead to an analogous new understanding of the simple groups.

In this paper, we compute $c_0(A_n)$ for all $n\geq 3,$ and we found among other things, an infinite family of alternating groups with $2$-connected power graph. We also add a description of some graph theoretical properties of the components.

\section{Notation and main results}
\bigskip
In \cite{bis} we studied the $2$-connectivity of  $P(G)$, for  $G\in \mathcal{F}$, as an application of the general theory developed in \cite{bub}.  Theorem B in \cite{bub} gives an explicit formula  for counting the components of a graph $X$ by the knowledge of the components of its quotient graph $Y$ when the projection is an  orbit homomorphism. We briefly  recall the main concepts and definitions needed for the sequel. For further details, the reader is referred to \cite{bub}.

\subsection{Graphs and homomorphisms}
For a finite set $A$ and $k\in \mathbb{N}$, let $\binom{A}{k}$ be the set of the subsets of $A$ of size $k.$
In this paper, as in  \cite{bub} and \cite{bis}, a graph $X=(V_X,E_X)$ is a pair of finite sets such that $V_X\neq\varnothing$ is the set of vertices  and $E_X$ is the  set  of edges given by the union of the set of  loops $L_X=\binom{V}{1}$ and of a set  $E^*_X\subseteq \binom{V}{2}$. Note that while there is a loop on every vertex, the set $E^*_X$ may be empty. If  $e\in E^*_X$  we say that $e$ is a {\it proper edge}.
We usually specify the edges of a graph giving only the edges in $E^*_X$.  $X$ is called complete if $E_X=\binom{V}{1}\cup \binom{V}{2}.$

From now on, let  $X=(V_X,E_X)$ and $Y=(V_Y,E_Y)$ be  two  graphs and let $\varphi:V_X \rightarrow V_Y$ be  a map. For every $y\in V_Y,$ the subset $\varphi^{-1}(y)$ of $V_X$  is called the {\it fibre} of $\varphi$ on $y$. The relation $\sim_{\varphi}$
 on $V_X$ defined, for every $x,y\in V_X$, by $x\sim_{\varphi}y$ if $\varphi(x)=\varphi(y),$ is an equivalence relation. The equivalence classes of  $\sim_{\varphi}$ are called $\varphi$-{\it cells} and coincide with the nonempty fibres of $\varphi$. We call $\sim_{\varphi}$ the equivalence relation induced by $\varphi$ and denote the corresponding quotient graph  by $X/\hspace{-1mm}\sim_{\varphi}.$
$\varphi$ is called a homomorphism from $X$ to $Y$  if, for every $x_1,x_2\in V_X,$ $\{x_1,x_2\}\in E_X$ implies $\{\varphi(x_1),\varphi(x_2)\}\in E_Y$.  In that case we use the notation $\varphi:X \rightarrow Y$. The set of homomorphisms from $X$ to $Y$ is denoted by $\mathrm{Hom}(X,Y)$.
Let $\varphi\in \mathrm{Hom}(X,Y)$. Then $\varphi$ naturally induces a map from $E_X$ to $E_Y$, denoted again by $\varphi$, associating to every $e=\{x_1,x_2\}\in E_X$ the edge $\varphi(e)=\{\varphi(x_1),\varphi(x_2)\}\in E_Y$. $\varphi$ is called a $2$-{\it homomorphism} if, for every $e\in E^*_X$, $\varphi(e)\in E^*_Y$;
 surjective (injective) if $\varphi:V_X\rightarrow V_Y$ is surjective (injective). If $\hat{X}$ is a subgraph of $X$, we define the image of $\hat{X}$ by $\varphi$ as the subgraph of $Y$ given by $\varphi(\hat{X})=(\varphi(V_{\hat{X}}), \varphi(E_{\hat{X}})).$ $\varphi$ is called
 {\it complete} if $\varphi(X)=Y$. Note that being complete is a stronger than being  surjective.
 We call $\varphi$ {\it locally strong} if for every $x_1,x_2\in V_X$, $\{\varphi(x_1),\varphi(x_2)\}\in E_Y$ implies the existence of $\tilde{x}_2\in \varphi^{-1}(\varphi(x_2))$  such that $\{x_1,\tilde{x}_2\}\in E_X;$  {\it pseudo-covering}  if it is surjective and locally strong;  an  {\it orbit homomorphism} if there exists $\mathfrak{G}\leq  \mathrm{Aut}(X)$ such that the partition of $V_X$ into $\varphi$-cells coincides with the partition of $V_X$ into $\mathfrak{G}$-orbits.
If $\varphi$ is an orbit homomorphism with respect to $\mathfrak{G}$ we shortly say that $\varphi$ is $\mathfrak{G}$-{\it consistent} or that $\mathfrak{G}$ is $\varphi$-{\it consistent}. Finally $\varphi$ is called {\it tame}  if  every fibre of $\varphi$ is connected.
\subsection{Components, admissibility, vertex deleted subgraphs}\label{components-intro} A component of $X$ is a maximal connected subgraph of $X$. The component of $X$ containing the vertex $x\in V_X$ is denoted by $C_{X}(x)$ or more simply, when no confusion is possible, by $C(x).$
The set of components of $X$ is denoted by $ \mathcal{C}(X)$.

Given $U\subseteq V_X$ and $y\in V_Y$, define the {\it multiplicity} of $y$ in $U$, through the map $\varphi$, by the non-negative integer $k_{U}(y)=|U\cap \varphi^{-1}(y) |.$
We say that $y$ is  {\it admissible} for $U$ (or $U$ is admissible for $y$), if $k_{U}(y)>0$. The set $\varphi(U)$ is therefore the set of vertices of $Y$ admissible for $U.$
If $\hat{X}$ is a subgraph of $X$ we adopt the same language referring to its set of vertices $V_{\hat{X}}$. In particular  $k_{\hat{X}}(y)$ is defined by  $k_{V_{\hat{X}}}(y)$.
 Note that $k_{X}(y)$ is simply the size of the fibre  $\varphi^{-1}(y)$. The subgraphs on which we are focused are the components of $X$.
The set of components of $X$ admissible for $y$  is denoted by $\mathcal{C}(X)_{y}=\{C\in\mathcal{C}(X): k_{C}(y)> 0\}.$

A homomorphism $\varphi\in \mathrm{Hom}(X,Y)$ is called {\it component equitable} if for every $y\in V_Y$ and every $C,\overline{C}\in \mathcal{C}(X)_{y}$, $k_{C}(y)=k_{\overline{C}}(y)$.
In \cite[Propositions 5.9 and 6.9]{bub} it is shown that every orbit complete homomorphism is component equitable and pseudo-covering.

Let  $x_0\in V_X$ be fixed. The vertex deleted subgraph is the graph $X_0=((V_X)_0,(E_X)_0)$ having vertex set $(V_X)_0=V_X\setminus\{x_0\}$ and edge set $(E_X)_0$ given by the edges in $E_X$ not incident to $x_0$. We use that uniform notation disregarding, from the notational point of view, the particular $x_0$ used.
 Moreover  we write $\mathcal{C}_0(X)$ for the set of the components of $X_0$ as well as $c_0(X)$  for their number. In order to improve readability, we occasionally  introduce some slight variation of that notational rule.

\subsection{The  proper power graph and its quotients}
The {\it power graph} of $G$ is the graph $P (G)$ with $V_{P (G)}=G$ and edge set $E$ where, for $x,y\in G$, $\{x,y\}\in E$ if there exists $m\in\mathbb{N}$ such that $x=y^m$ or $y=x^m$. Let $G_0=G\setminus\{1\}.$ The {\it proper  power graph} $P_0 (G)=(G_0,E_0)$ is defined as the $1$-deleted subgraph  of $P(G).$ The number of components  of $P_0 (G)$ is denoted by  $c_0(G)$.

 In \cite{bis} we  got a formula for  $c_0(G)$ through the consideration of a series of quotients of $X=P_0 (G)$.  For a particular class of permutation  groups, the so-called fusion controlled permutation groups (\cite[Definition 6.1]{bis}), that formula became particularly concrete (\cite[Theorem A]{bis}) by considering a relevant quotient graph, the power type graph. Throughout the paper  let $n$ indicate a natural number. The symmetric group $S_n$ and the alternating group $A_n$ are interpreted as naturally acting on the set $N=\{1,\dots,n\}$ and with identity element $id$. They are both  fusion controlled and so they share a similar method for the interpretation and counting of their proper power graph components. Recall that $G\leq S_n$ is called  fusion controlled if  for every $ \psi\in G$ and  $x\in S_n$ such that $\psi^x\in G$, there exists $y\in N_{S_n}(G)$ such that $\psi^x=\psi^y.$ We present an overview of the quotient graphs, introduced in \cite{bis},  that we are going to use.

The \emph{quotient power graph} $\widetilde{P}(G)$ is the quotient graph of
$P(G)$ with respect to the equivalence relation identifying $x,y\in G$ if
$\langle x\rangle=\langle y\rangle$. The vertex set  of $\widetilde{P}(G)$, denoted by
$[G]$, is formed by the corresponding equivalence classes $[x]$, for $ x\in G$, and the edge set is denoted  by $[E].$ We define the order of $[x]$ as the order of $x.$ For every $X\subseteq G$, we also set $[X]=\{[x]\in [G]:x\in X\}$.
The  {\it proper quotient power graph}
$\widetilde{P}_0(G)=([G]_0, [E]_0)$ is defined as the $[1]$-cut subgraph of $\widetilde{P}(G)$. Note that we have set
$[G]_0=[G]\setminus\{[1]\}$. We recall a very useful link between edges in the proper power graph and in the proper quotient power graph (\cite[Lemma 3.5]{bis}).
\begin{equation}\label{lato}
\forall x,y\in G_0,\  \{[x], [y]\}\in [E]_0 \mbox{ if and only if } \{x,y\}\in E_0.
  \end{equation}
The projection $\pi:G_0\rightarrow [G]_0$, defined by $\pi(x)=[x]$ for all $x\in G_0$ gives a pseudo-covering  homomorphism  between the graph $P_0(G)$ and its tame quotient $\widetilde{P}_0(G)$ (\cite[Definition 3.2 and Lemma 3.6]{bis}). As a consequence, the number $\widetilde{c}_0(G)$ of the components of
$\widetilde{P}_0(G)$  is equal to $c_0(G)$.

The {\it order graph}  $\mathcal{O}(G)$ is the graph
with vertex set $O(G)=\{ o(g): g\in G\}$, where for $a,b\in O(G)$, $\{a,b\}\in E_{\mathcal{ O}(G)}$
if $a$ divides $b$ or $b$ divides $a$.
The {\it proper order graph}
$\mathcal {O}_0(G)$ is the $1$-deleted subgraph of $\mathcal{O}(G) $.
In  \cite[Corollary 4.3]{bis}, we proved that $\mathcal {O}_0(G)$ is a quotient of $\widetilde{P}_0(G)$
 which usually simplifies in a too drastic way
the complexity of $\widetilde{P}_0(G)$. In fact,  the natural map
$\widetilde{o}:[G]_0\rightarrow O_0(G)$ defined by
$\widetilde{o}([\psi])=o(\psi)$ for all $[\psi]\in [G]_0$, induces a complete  $2$-homomorphism $\widetilde{o}$ from $\widetilde{P}_0(G)$ onto $\mathcal {O}_0(G)$ (\cite[Proposition 4.2]{bis}) which is not, in general, pseudo-covering (\cite[Example 4.4]{bis}). We denote by $c_0(\mathcal{O}(G))$ the number of components of $\mathcal {O}_0(G)$.
Note that $\{a,b\}\in E^*_{\mathcal{ O}_0(G)}$ if and only if $a$ divides $b$ or $b$ divides $a$, $a\neq b$ and both $a$ and $b$ are different from $1.$

Finally we recall the definition of the  power type graph $P(\mathcal{T}(G))$,  for $G\leq S_n$  (\cite[Section 5.4 ]{bis}.
We start recalling the definition of power in the set $\mathcal{T}(n)$ of the partitions of $n$. Recall that a partition of $n$ is an unordered $r$-tuple
$T=[x_1,\dots,x_r]$, with $x_i\in\mathbb{N}$ for all $i\in
\{1,\dots,r\}$ and $r\in \mathbb{N}$, such that $n=\sum_{i=1}^{r}x_i.$
The $x_i$ are  the terms of $T$.
Given $T\in \mathcal{T}(n)$, let $m_1<\dots < m_k$ be its $k$ distinct terms, for some $k\in \mathbb{N}$; if $m_j$ appears $t_j\geq 1$ times in $T$ we use the notation $T=[m_1^{t_1},..., m_k^{t_k}]$ and say that $t_j$ is the  multiplicity of $m_j$.  We will accept, in some occasions,  the multiplicity $t_j=0$ simply  to say that a certain natural number $m_j$ does not appear as a term in $T.$  We usually omit to write down the multiplicities equal to $ 1$. The partition $[1^n]$ is called the  trivial partition and we put $\mathcal{T}_0(n)=\mathcal{T}(n)\setminus \{[1^n]\}$.
We define $\lcm(T)=\lcm\{m_i\}_{i=1}^k$ and $\gcd (T)=\gcd\{m_i\}_{i=1}^k$; the  order of $T$  is defined by $\lcm(T)$ and written as $o(T).$ For $T=[x_1,..., x_r]\in\mathcal{T}(n)$ and  $a\in \mathbb{N}$, the power of $T$ of exponent $a$ is defined as the partition $T^a$ having as terms $\frac{x_i}{\gcd(a,x_i)}$  repeated $\gcd(a,x_i)$ times for  all $i\in \{1,\dots,r\}$. We say that $T^a$ is a  proper power of $T$ if $[1^n]\neq T^a\neq T.$ Note that $T^a$ is a proper power of $T$ if and only if $\gcd(a, o(T))\neq 1,o(T).$ For instance, $[2,5,6]^2=[1^2,3^2,5]$ is a proper power of $[2,5,6]\in \mathcal{T}(13).$

Let $\psi\in S_n.$ The  type of  $\psi$ is the partition of $n$ given by  the unordered list
$T_{\psi}=[x_1,...,x_r]$ of the sizes $x_i$  of the $r$ orbits  of $\psi$ on $N.$ In particular, $T_{id}=[1^n].$  The type map $t:S_n\rightarrow \mathcal{T}(n)$ which maps $\psi$ into $T_{\psi}$ is surjective
and it is well known that $T_{\psi}=T_{\varphi}$ if and only if $\psi, \varphi\in S_n$ are conjugate in $S_n$.  If $X\subseteq S_n$, then $t(X)$ is  the set of types admissible for $X$ in the sense of  \cite[Section 4.1]{bub}, and is  denoted by $\mathcal{T}(X)$.
 We also set  $\mathcal{T}_0(X)=\mathcal{T}(X)\setminus \{ [1^n]\}$. Note that $\mathcal{T}(S_n)=\mathcal{T}(n)\supsetneq \mathcal{T}(A_n)$ for all $n\geq 2.$

Given $G\leq S_n$,  the {\it power type graph}  $P(\mathcal{T}(G))$ is defined as the graph with vertex set $\mathcal{T}(G)$ and edge set
 $E_{\mathcal{T}(G)}$, where $\{T_1, T_2\}\in E_{\mathcal{T}(G)}$,  for some $T_1, T_2\in \mathcal{T}(G)$,  if there exists $a\in\mathbb{N}$ such that $T_1=T_2^a$ or $T_2=T_1^a.$
The  {\it proper power type graph}  $P_0(\mathcal{T}(G))=(\mathcal{T}_0(G), E_{\mathcal{T}_0(G)})$ is defined as the $[1^n]$-deleted subgraph of  $P(\mathcal{T}(G))$. Note that given $\psi,\varphi\in G_0$, there is a proper edge incident to $T_{\psi}, T_{\varphi}\in \mathcal{T}_0(G)$ in $P_0(\mathcal{T}(G))$  if and only if one of $T_{\psi}, T_{\varphi}$  is a proper power of the other. We denote by $c_0(\mathcal{T}(G))$ the number of components of $P_0(\mathcal{T}(G))$.
Recall that $\mathcal {O}_0(G)$ may be seen as a quotient of $P_0(\mathcal{T}(G))$ (\cite[Proposition 5.6]{bis}).

For every $\psi\in S_n$, the  type of  $[\psi]\in [S_n]$ is defined  by  $T_{[\psi]}=T_{\psi}$. The  map
$\widetilde{t}: [G]_0\rightarrow \mathcal{T}_0(G)$ defined by $\widetilde{t}([\psi])=T_{\psi}$ for all $[\psi]\in [G]_0,$
gives a complete $2$-homomorphism from $P_0(\mathcal{T}(G))$  to  $\widetilde{P}_0(G)$ so that $P_0(\mathcal{T}(G))$ is a quotient of $\widetilde{P}_0(G)$ (\cite[Proposition 5.4]{bis}).  For $X\subseteq S_n$  let  $[X]=\{[x]\in[S_n] :  x\in X\}$. Then, accordingly to \cite[Section 4.1]{bub}, $\widetilde{t}([X])=t(X)=\mathcal{T}(X)$ is  the set of   types admissible for $[X]$.  If $\hat{X}$ is a subgraph of $ \widetilde{P}_0(G)$ the set of types admissible for $\hat{X}$, denoted  by $ \mathcal{T}(\hat{X})$,  is given by the set of types admissible for $V_{\tilde{X}}.$ In particular, for $C$ component of $ \widetilde{P}_0(A_n)$ we have $ \mathcal{T}(C)=\{T\in \mathcal{T}(G_0):\hbox{there exists}\  [\psi]\in V_C\  \hbox{with}\ T_{\psi}=T\}.$

In the fusion controlled case,  $\widetilde{t} $ is also an orbit homomorphism (\cite[Proposition 6.2]{bis}) and thus we can deduce the number
of components of $\widetilde{P}_0(G)$ from those of  $P_0(\mathcal{T}(G))$ (\cite[Theorem A]{bis}).

In \cite{bis}, we proved that the graphs $P_0(G),\widetilde{P}_0(G),$ $ P_0(\mathcal{T}(G)),$ $ \mathcal{O}_0(G)$ are very strictly related  and  we computed simultaneously the number of components of those graphs for $G=S_n$ (\cite[Theorem B]{bis}).  In particular we saw
that, for $n\geq 8$, the components of $\widetilde{P}_0(S_n), P_0(\mathcal{T}(S_n))$ and
$\mathcal{O}_0(S_n)$, apart from one, are isolated vertices and that the $2$-connectivity
of $P(S_n)$ is equivalent to that of $P(\mathcal{T}(S_n))$ as well as to that of
$\mathcal{O}(S_n)$, that is, to being $n = 2$ or none of $n$ and $ n-1$ a prime (\cite[Corollary C]{bis}).
Here, we find analogous results about $A_n$ applying the same general algorithmic procedure.
Since, for $n\in\{1,2\}$, $P_0(A_n)$  is the empty graph, we deal with the set of graphs
$$\mathcal {G}=\{P(A_n), \widetilde{P}(A_n), P(\mathcal{T}(A_n)), \mathcal{O}(A_n)\}$$
and the set of their corresponding proper graphs
\begin{equation}\label{G0}
\mathcal {G}_0=\{P_0(A_n),\widetilde{P}_0(A_n), P_0(\mathcal{T}(A_n)), \mathcal{O}_0(A_n)\},
\end{equation}
for  $n\geq 3$. If $X\in \mathcal {G}$, we denote the corresponding proper graph in
$\mathcal {G}_0$ with $X_0$. Note that every $X\in \mathcal {G}$ is connected because it contains a vertex adjacent to any other vertex and
it is $2$-connected if and only if $X_0$ is connected.
Let $$\mathcal{C}_0 (A_n), \widetilde{\mathcal{C}}_0 (A_n), \mathcal{C}_0(\mathcal{T}(A_n)), \mathcal{C}_0(\mathcal{O}(A_n))$$ be the sets of components of $$P_0(A_n), \widetilde{P}_0(A_n), P_0(\mathcal{T}(A_n)), \mathcal{O}_0(A_n)$$ respectively  so that, due to the given notation, we have $$c_0(A_n)=|\mathcal{C}_0 (A_n)|,\
\widetilde{c}_0(A_n)=|\widetilde{\mathcal{C}}_0 (A_n)|,\  c_0(\mathcal{T}(A_n))=|\mathcal{C}_0(\mathcal{T}(A_n))|, \ c_0(\mathcal{O}(A_n))=|\mathcal{C}_0(\mathcal{O}(A_n))|.$$

The following manageable inequalities link the number of components of the graphs in $\mathcal {G}_0.$ They follow immediately from \cite[Propositions 5.4, 5.6]{bis} and \cite[Proposition 3.2]{bub}.
\begin{equation}\label{connected}
\forall n\geq 3,\quad  c_0(\mathcal{O}(A_n))\leq c_0(\mathcal{T}(A_n))\leq \widetilde{c}_0(A_n)=c_0(A_n).
\end{equation}

In this paper, we find in one go the values of $c_0(A_n)= \widetilde {c}_0(A_n), c_0(\mathcal{T}(A_n))$ and $c_0(\mathcal{O}(A_n))$  for all $n\geq 3$, and give information on the components of the graphs in $\mathcal{G}_0$.

The idea of studying $P_0(G)$ through the proper quotient power graph  or through the proper order graph is not a complete novelty.  Even though in the literature it is not made explicit, it is tacitly implicated all the times in which the cyclic subgroups of $G$ or its element orders are used as demonstration tools. For instance, in our language, the formula for computing the number of edges of $P_0(G)$ in
 \cite[Corollary 5]{shi}, which is an application of  \cite[Theorem 4.2]{chagho}, sounds as $|E^*_{P_0(G)}|=\frac{1}{2}\sum_{m\in O_0(G)} s_m(2m-\phi(m)-3)$,
 where $s_m$ denotes the number of elements of order $m$ in $G$ and $\phi$ denotes the Euler totient function.
 It is instead completely original the idea of studying the proper power graph through the proper power type graph.

Let $P$ denote the set of prime numbers. For $b,c\in \mathbb{N}$, put
 \[bP+c=\{x\in \mathbb{N}:  x=bp+c, \ \hbox{for some}\   p\in P\}\]
and define
\begin{equation}\label{A} A=P\cup (P+1)\cup (P+2)\cup(2P)\cup (2P+1).\end{equation}
The set of integers $A$ plays the main role in our connectivity problem for the graphs in  $\mathcal{G}_0$.

\begin{thmA} Let  $P$ be the set of prime numbers and $A$ as in \eqref{A}.

The values of $c_0(A_n)= \widetilde {c}_0(A_n), c_0(\mathcal{T}(A_n))$ and $c_0(\mathcal{O}(A_n))$  are as follows.
\vspace{2mm}

\begin{itemize}

 \item[(i)] For $3\leq n\leq 10,$ they are given in Table 1 below.

 \begin{table}[ht]
\caption{$c_0(A_n),\  \widetilde {c}_0(A_n)$,  $c_0(\mathcal{T}(A_n))$ and $c_0(\mathcal{O}(A_n))$, for $3\leq n \leq 10$.}\label{eqtable1}
\begin{center}
\begin{tabular}{|c|c|c|c|c|c|c|c|c|}
\hline
$n$  & $3$ & $4$ & $5$ & $6$ & $7$ & $8$ & $9$ & $10$\\
\hline
$c_0(A_n)=\widetilde {c}_0(A_n)$ & $1$ & $7$ & $31$ & $121$ & $421$ & $962$ & $5442$ & $29345$\\
\hline
$c_0(\mathcal{T}(A_n))$ & $1$ & $2$ & $3$ & $4$& $4$ & $3$ & $4$ & $3$ \\
\hline
$c_0(\mathcal{O}(A_n))$ & $1$ &$ 2$ &$ 3$ & $3 $&$ 3$ & $2$ & $2$ & $1$\\

\hline
\end{tabular}
\end{center}
\end{table}
\item[(ii)]  For $n\geq 11$, they are given by Table $2$ below.

\begin{table}[ht]
\caption{$c_0(A_n)$,\  $\widetilde {c}_0(A_n),\  c_0(\mathcal{T}(A_n))$ and $c_0(\mathcal{O}(A_{n}))$ for $n\geq 11$.}\label{eqtable2}
\begin{center}
\begin{tabular}{|c|c|c|c|}
\hline
\vspace{-3mm}
& & &\\

{$n\geq 11$} & {$c_0(A_n)=\widetilde {c}_0(A_n)$}&
{$c_0(\mathcal{T}(A_n))$}&
{$c_0(\mathcal{O}(A_{n}))$}
\\
\vspace{-3mm}
& & &\\

\hline
\vspace{-3mm}
& & &\\
$n-2,\, {\frac{n-1}{2}}\in P$ & $\displaystyle{\frac{n(n-1)(n-4)!}{2}+\frac{4n(n-2)(n-4)!}{n-1}+1}$ & $3$ & $2$ \\
\vspace{-3mm}
& & &\\
\hline
\vspace{-3mm}
& & &\\
$n,\, {\frac{n-1}{2}}\in P$ & $\displaystyle{(n-2)!+\frac{4n(n-2)(n-4)!}{n-1}+1}$ & $3$ &$2$ \\
\vspace{-3mm}
& & &\\
\hline
\vspace{-3mm}
& & &\\
$n,\, n-2\in P$ & $\displaystyle{(n-2)!+{\frac{n(n-1)(n-4)!}{2}}+1}$ & $3$ & $3$\\
\vspace{-3mm}
& & &\\

\hline
\vspace{-3mm}
& & &\\
$n-2\in P,\, n\notin P,\,{\frac{n-1}{2}}\notin P$ & $\displaystyle{{\frac{n(n-1)(n-4)!}{2}}+1}$ & $2$ & $2$\\
\vspace{-3mm}
& & &\\

\hline
\vspace{-3mm}
& & &\\

${\frac{n-1}{2}}\in P,\, n\notin P,\, n-2\notin P$ & $\displaystyle{\frac{4n(n-2)(n-4)!}{n-1}+1}$ & $2$ &$1$ \\
\vspace{-3mm}
& & &\\

\hline
\vspace{-3mm}
& & &\\

$n\in P,\,n-2\notin P,\, {\frac{n-1}{2}}\notin P$ & $(n-2)!+1$ & $2$ & $2$\\

\vspace{-3mm}
& & &\\

\hline
\vspace{-3mm}
& & &\\

$n-1\in P,\,{\frac{n}{2}}\notin P$ & $n(n-3)!+1$ & $2$ & $2$ \\
\vspace{-3mm}
& & &\\

\hline
\vspace{-3mm}
& & &\\

$n-1,\,{\frac{n}{2}}\in P$ & $\displaystyle{\frac{4(n-1)(n-3)!}{n}+n(n-3)!+1}$ & $3$ &$2 $\\
\vspace{-3mm}
& & &\\

\hline
\vspace{-3mm}
& & &\\

${\frac{n}{2}}\in P,\, n-1\notin P$ & $\displaystyle{\frac{4(n-1)(n-3)!}{n}+1}$ & $2$ & $1$\\
\vspace{-3mm}
& & &\\

\hline
\vspace{-3mm}
& & &\\

$n\notin A$ &$1$ & $1$ & $1$\\
\hline
\end{tabular}
\end{center}
\end{table}
\end{itemize}
\end{thmA}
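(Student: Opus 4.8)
The plan is to read off all four quantities from the single graph $P_0(\mathcal{T}(A_n))$, exactly as was done for the symmetric groups in \cite{bis}, and then transfer the information along the chain of quotients $P_0(A_n)\to\widetilde P_0(A_n)\to P_0(\mathcal{T}(A_n))\to\mathcal{O}_0(A_n)$ using the facts recalled in Section~2: namely $\widetilde c_0(A_n)=c_0(A_n)$, the inequalities \eqref{connected}, and the fact that, $A_n$ being fusion controlled, $\widetilde t$ is an orbit, complete, component equitable homomorphism. The work divides into the range $3\le n\le 10$, where everything is finite and can be checked directly, and the range $n\ge 11$, where uniform formulas emerge.

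\textbf{Small cases.} For $3\le n\le 10$ I would list the cycle types and element orders occurring in $A_n$ — there are very few — draw $P_0(\mathcal{T}(A_n))$ and $\mathcal{O}_0(A_n)$, and count their components, obtaining the last two rows of Table~1. The numbers $c_0(A_n)=\widetilde c_0(A_n)$ in the first row are obtained either by a short machine computation on $P_0(A_n)$ or, more conceptually, by analysing the poset of nontrivial cyclic subgroups of $A_n$ (their number of each type and the inclusions between them). The point to be careful about — and the reason these values must be tabulated rather than being instances of Table~2 — is that for small $n$ the ``large'' component of $P_0(\mathcal{T}(A_n))$ need not lift to a connected subgraph of $\widetilde P_0(A_n)$: already for $A_7$ it splits into many components, and for $A_8$ there is an extra component $\{[2^4],[4^2],[6,2],[3^2,1^2]\}$ in $P_0(\mathcal{T}(A_8))$ itself.

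\textbf{The components of $P_0(\mathcal{T}(A_n))$ for $n\ge 11$ (the main point).} I would prove that $P_0(\mathcal{T}(A_n))$ is the disjoint union of one component $\mathcal{H}$ carrying a proper edge together with isolated vertices which are precisely: $[n]$ if $n\in P$; $[n-1,1]$ if $n-1\in P$; $[n-2,1^2]$ if $n-2\in P$; $[\tfrac{n-1}{2},\tfrac{n-1}{2},1]$ if $\tfrac{n-1}{2}\in P$; and $[\tfrac n2,\tfrac n2]$ if $\tfrac n2\in P$. First I would note that $T\in\mathcal{T}_0(A_n)$ has no proper power if and only if $o(T)$ is prime (immediate from the criterion $\gcd(a,o(T))\neq 1,o(T)$ recalled in Section~2), so that an isolated vertex is of the form $[p^k,1^{n-kp}]$ with $p$ prime and this partition even; for the five listed partitions one checks directly that every $S$ admitting a proper power equal to it — obtained by merging the fixed point(s) with a $p$ into a $2p$, or two $p$'s into a $2p$, etc. — is an \emph{odd} permutation, hence they are isolated. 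The hard part, and in my view the genuine obstacle of the paper, is the converse: every other $T\in\mathcal{T}_0(A_n)$ lies in $\mathcal{H}$. Here the plan is: (a) replacing $T$ by a proper power strictly lowers $o(T)$, so one is reduced to $T=[p^k,1^{n-kp}]$ with $(p,k)$ not one of the five exceptional pairs; (b) connect each such $T$ to the fixed ``centre'' $[3,1^{n-3}]$ of $\mathcal{H}$ via explicit even partitions, e.g.\ $[p,1^{n-p}]\sim[p,3,1^{n-p-3}]\sim[3,1^{n-3}]$ for an odd prime $p\le n-3$, $[3,1^{n-3}]\sim[3,2,2,1^{n-7}]\sim[2^2,1^{n-4}]$, and $[2^k,1^{n-2k}]\sim[3,2^k,1^{n-2k-3}]\sim[3,1^{n-3}]$ for $k$ even, together with analogous constructions for $[p^k,1^{n-kp}]$, always using an auxiliary $2$-cycle or $3$-cycle to fix parity and an auxiliary $2p$-cycle to ``package'' several equal parts. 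The hypothesis $n\ge 11$ is exactly what guarantees enough room for these auxiliary partitions; below it the extra components recorded in Table~1 appear.

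\textbf{Transfer to the other three graphs.} For $n\ge 11$ I would feed the structure of $P_0(\mathcal{T}(A_n))$ into \cite[Theorem~A]{bis}. Since $\widetilde t$ is a complete $2$-homomorphism, the preimage in $\widetilde P_0(A_n)$ of an isolated vertex $T$ can carry no proper edge, so it is a set of isolated vertices, one for each cyclic subgroup of $A_n$ of type $T$; counting the permutations of cycle type $T$ and dividing by $\phi(o(T))$ gives the contributions $(n-2)!$, $n(n-3)!$, $\tfrac{n(n-1)(n-4)!}{2}$, $\tfrac{4n(n-2)(n-4)!}{n-1}$ and $\tfrac{4(n-1)(n-3)!}{n}$ of the five types. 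That $\mathcal{H}$ contributes exactly $1$ to $\widetilde c_0(A_n)$ is the remaining thing to verify; I would deduce it from the counting formula of \cite[Theorem~B]{bub}, or by hand by joining the cyclic subgroups of type $[2^2,1^{n-4}]$ in $\widetilde P_0(A_n)$ through cyclic subgroups of orders $4$ and $6$, which is possible precisely because $n\ge 11$. Adding the contributions to $1$ yields the first column of Table~2. For $\mathcal{O}_0(A_n)$, a quotient of $P_0(\mathcal{T}(A_n))$, the analysis is parallel but shorter: an element order $m\in\{n,n-1,n-2\}\cap P$ is an isolated vertex of $\mathcal{O}_0(A_n)$ because the only element order of $A_n$ divisible by $m$ is $m$ itself, whereas $\tfrac{n-1}{2}$ (resp.\ $\tfrac n2$), when prime, is joined to the main component since it divides the element order $n-1$ (resp.\ $n$), realised by $[\tfrac{n-1}{2},2,2,1^{\ast}]$ (resp.\ $[\tfrac n2,2,2,1^{\ast}]$); hence $c_0(\mathcal{O}(A_n))=1+|\{n,n-1,n-2\}\cap P|$. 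Finally I would check which of the five primality conditions can hold simultaneously for $n\ge 11$ — for instance $n$, $n-2$, $\tfrac{n-1}{2}$ cannot all be prime, since then $3\mid n-1$ and $\tfrac{n-1}{2}>3$ is divisible by $3$ — to see that the admissible configurations are exactly the rows of Table~2, completing the proof.
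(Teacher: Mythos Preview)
Your proposal is correct and follows essentially the same route as the paper: for $n\ge 11$ the paper likewise isolates the (at most two simultaneous) exceptional types among $[n]$, $[n-1,1]$, $[n-2,1^2]$, $[(\tfrac{n-1}{2})^2,1]$, $[(\tfrac n2)^2]$, connects every other type to $[2^2,1^{n-4}]$ (equivalently to $[3,1^{n-3}]$) by exactly the kind of explicit even-partition paths you sketch, and lifts to $\widetilde P_0(A_n)$ via the key step --- your ``by hand'' joining of all cyclic subgroups of type $[2^2,1^{n-4}]$, which is the paper's Lemma~\ref{lem:12}(i) --- together with component equitability of the orbit homomorphism $\widetilde t$. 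The only difference is organisational: the paper states its connectivity lemmas directly in $\widetilde P_0(A_n)$ (Lemmas~\ref{lem:16}--\ref{lem:21}, treating even orders, order~$3$, the remaining small primes, and composite orders in turn) rather than first in $P_0(\mathcal{T}(A_n))$, but the underlying paths, the counting via \eqref{count}--\eqref{countilde}, and the reason the threshold $n\ge 11$ appears are identical to yours.
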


We emphasise that the proof of Theorem A is  conduced similarly to the symmetric case in \cite{bis} but it is more tricky and requires some more technicalities and attention.
 The $2$-connectivity of the proper power graph of a finite group $G$ was studied by Doostabadi and Farrokhi in \cite{df} for nilpotent groups, groups admitting a partition, symmetric and alternating groups.
They used ingenious ad hoc arguments for the various classes of finite groups considered, but did not create a general procedure to determine $c_0(G)$. Unfortunately,  the computation of $c_0(A_n)$ in \cite[Theorem 4.7]{df}  is not generally correct, with mistakes related to infinitely many $n$. For instance, they claim that $c_0(A_{21})=19!+1$, while we found that $c_0(A_{21})=21\cdot 10\cdot 17!+1$. The more conspicuous error in \cite{df}  seems to be considering the case $n\in 2P+2$ as significant, while Table $2$ shows it is not.

Going beyond the mere counting, we describe the components of the proper power graph of $A_n$ and their quotients. Note that for $n\notin A$, with $n\neq 3$, no $X_0\in\mathcal{G}_0$ is a complete graph because $X_0$ admits as quotient $\mathcal{O}_0(A_{n})$ which is surely incomplete having as vertices at least two primes.

\begin{thmB} Let  $n\in A$, with $n\geq 11$, and $\widetilde{\Omega}_n$ be the component of $\widetilde{P}_0(A_n)$ containing $[(12)(34)]$.
 \begin{itemize}
  \item[(i)] $\widetilde{P}_0(A_n)$ consists of the main component $\widetilde{\Omega}_n$ and of some isolated vertices of prime order, with at most two primes  involved.
 \item[(ii)] $P_0(A_n)$ consists of the main component induced by  $\{\psi\in A_n\setminus\{id\}: [\psi]\in V_{\widetilde{\Omega}_n}\}$ and of some complete graphs on $p-1$ vertices, with at most two primes $p$ involved.
  \item[(iii)] $P_0(\mathcal{T}(A_n))$ consists of the main component  $\widetilde{t}(\widetilde{\Omega}_n)$ and of some isolated vertices of prime order, with at most two primes  involved.
 \item[(iv)] $\mathcal{O}_0(A_n)$  consists of the main component $\widetilde{o}(\widetilde{\Omega}_n)$ and of some isolated vertices which are primes,  with at most two primes  involved.
\end{itemize}
 In all the above cases, the main component is never complete.
\end{thmB}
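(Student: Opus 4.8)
The plan is to establish (i) for $\widetilde{P}_0(A_n)$ first, and then to read off (ii)--(iv) by transporting that description along the homomorphisms $\pi\colon P_0(A_n)\to\widetilde{P}_0(A_n)$, $\widetilde{t}\colon\widetilde{P}_0(A_n)\to P_0(\mathcal{T}(A_n))$ and $\widetilde{o}\colon\widetilde{P}_0(A_n)\to\mathcal{O}_0(A_n)$, using the numerical values of Theorem~A at each step to certify that no component is missed. The decisive trick is that these counts let us \emph{avoid} a direct proof that the ``core'' of $\widetilde{\Omega}_n$ is connected: once $\widetilde{c}_0(A_n)$ is known and enough isolated vertices are exhibited, what is left is forced to be a single component, and it contains $[(12)(34)]$ because that class is not one of the isolated ones.

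To prove (i), for each of $n$, $n-1$, $n-2$, $\frac{n}{2}$, $\frac{n-1}{2}$ that is a prime $p$ I consider the partition of $n$ of order $p$ using as many points as possible --- namely $[n]$, $[1,n-1]$, $[1^2,n-2]$ for $p\in\{n,n-1,n-2\}$, and $[p^2]$, $[1,p^2]$ for $p=\frac{n}{2}$, $p=\frac{n-1}{2}$; these all lie in $\mathcal{T}_0(A_n)$ since $p$ is odd. Let $I\subseteq[A_n]_0$ be the set of classes whose generators have one of these ``maximal'' types. I would then establish:
\begin{itemize}
\item[(a)] each $[\psi]\in I$ is an isolated vertex of $\widetilde{P}_0(A_n)$: being of prime order it has no proper power, and by \eqref{lato} it is enough to see that its type $T$ is not a proper power of a type in $\mathcal{T}(A_n)$ --- a candidate $T'$ with $T'^a=T$ and $[1^n]\neq T'\neq T$ would contain a cycle of length $2p$ (impossible when $p\in\{n,n-1,n-2\}$, and yielding an odd permutation when $p\in\{\frac{n}{2},\frac{n-1}{2}\}$) or would attach one transposition to a $p$-cycle (impossible in $A_n$ for parity or size reasons);
\item[(b)] no other prime-order class is isolated: for $j<\lfloor n/p\rfloor$ one has $[1^{n-jp},p^j]=\big([1^{n-jp-4},2^2,p^j]\big)^2$ with $[1^{n-jp-4},2^2,p^j]\in\mathcal{T}(A_n)$;
\item[(c)] since all generators of a cyclic subgroup of order $p$ have the same $p$-type, $|I|$ is the sum, over the relevant primes, of (number of permutations of the maximal type)$/\phi(p)$, which a direct computation shows equals $\widetilde{c}_0(A_n)-1$ row by row in Table~2; moreover for $n\geq 11$ at most two of the five quantities are prime (among three consecutive integers $\geq 9$ only a twin pair $n,n-2$ can be simultaneously prime, and then $6\mid n-1$, so $\frac{n-1}{2}$ is composite, while if $n$ is even only $n-1$ and $\frac{n}{2}$ remain).
\end{itemize}
Granting (a)--(c), $\widetilde{P}_0(A_n)$ has the $|I|$ isolated vertices of $I$ as components, together with $\widetilde{\Omega}_n$, which differs from all of them because $[(12)(34)]\notin I$ and $\widetilde{\Omega}_n$ is non-trivial (e.g.\ $[(12)(34)]=\big((1324)(56)\big)^2$). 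Hence $\widetilde{P}_0(A_n)$ has at least, so exactly, $|I|+1$ components; every class outside $I$ lies in $\widetilde{\Omega}_n$, and the remaining components are isolated vertices of prime order with at most two primes involved. This is (i).

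For (ii)--(iv): $\pi$ is a tame pseudo-covering, so the $\pi$-preimage of a connected subgraph is connected, and since $c_0(A_n)=\widetilde{c}_0(A_n)$ the components of $P_0(A_n)$ are the subgraph induced by $\{\psi\neq id:[\psi]\in V_{\widetilde{\Omega}_n}\}$ and the fibres $\langle\psi\rangle\setminus\{id\}$ over the isolated $[\psi]$, each complete on $p-1$ vertices; this is (ii). Next, $\widetilde{t}$ and $\widetilde{o}$ are complete $2$-homomorphisms ($\widetilde{t}$ moreover an orbit homomorphism, hence component equitable), so they map $\widetilde{\Omega}_n$ onto the connected subgraphs $\widetilde{t}(\widetilde{\Omega}_n)$, $\widetilde{o}(\widetilde{\Omega}_n)$ and map each isolated $[\psi]\in I$ to $T_\psi$, resp.\ to $p=o(\psi)$; the $\widetilde{t}$-fibre over $T_\psi$ lies in $I$, and by (a) each such $T_\psi$ is isolated in $P_0(\mathcal{T}(A_n))$, whence (iii). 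Finally $p$ is isolated in $\mathcal{O}_0(A_n)$ exactly for $p\in\{n,n-1,n-2\}$ (no even permutation on $n$ points has order a proper multiple of such a $p$), whereas if $p=\frac{n}{2}$ (resp.\ $\frac{n-1}{2}$) the permutation of type $[1^{p-4},2^2,p]$ (resp.\ $[1^{p-3},2^2,p]$) has order $2p=n$ (resp.\ $2p=n-1$) with $p$-th power a double transposition, so $p$ lies in $\widetilde{o}(\widetilde{\Omega}_n)$ and is absorbed; comparison with the columns $c_0(\mathcal{T}(A_n))$ and $c_0(\mathcal{O}(A_n))$ of Table~2 shows nothing else arises, giving (iv) and the bound of two primes in each case.

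It remains to see that the main component is never complete, and it suffices to check this for $\widetilde{o}(\widetilde{\Omega}_n)$ since $\mathcal{O}_0(A_n)$ is a quotient of every other graph in $\mathcal{G}_0$ and a homomorphic image of a complete graph is complete. As $n\geq 11$, the element $(12)(34)(567)$ lies in $A_n$, has order $6$, its class is adjacent to $[(12)(34)]$ and so lies in $\widetilde{\Omega}_n$, and its powers $(12)(34)$ and $(576)$ put both $2$ and $3$ in $\widetilde{o}(\widetilde{\Omega}_n)$, while $\{2,3\}\notin E^*_{\mathcal{O}_0(A_n)}$. I expect the genuine difficulty to be parts (a)--(b): fixing the correct maximal $p$-types and carrying out the parity/cardinality bookkeeping inside $A_n$ rather than $S_n$ is precisely where the alternating case is more delicate than the symmetric one of \cite{bis}, and one has to verify, case by case for each of the five primes, both that the maximal type is isolated and that the smaller ones are absorbed --- though the bulk of this analysis is already contained in the proof of Theorem~A and here only needs to be repackaged structurally.
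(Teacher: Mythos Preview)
Your approach is essentially the same as the paper's: both deduce Theorem~B from the structural analysis carried out in the proof of Theorem~A, identifying the isolated critical types and showing everything else belongs to $\widetilde{\Omega}_n$. The paper does this by citing Lemma~\ref{lem:21} directly (any $[\psi]\notin V_{\widetilde{\Omega}_n}$ has $o(\psi)\in B(n)$, and those types are isolated), whereas you exhibit the isolated classes and match their count against Table~2 to force the remainder into a single component; both routes are valid and rest on the same groundwork. Your item~(b) is not actually needed once the counting argument is in place. One small point you skate over: for (iv) you must argue that $\widetilde{o}(\widetilde{\Omega}_n)$ is the full component, not merely a connected subgraph spanning the right vertex set --- the paper secures this from the \emph{completeness} of $\widetilde{o}$ (every edge of $\mathcal{O}_0(A_n)$ is the image of an edge of $\widetilde{P}_0(A_n)$, and any such edge incident to a non-critical order has a preimage inside $\widetilde{\Omega}_n$); you mention completeness but should invoke it explicitly here. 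The non-completeness argument via $2,3\in V_{\widetilde{o}(\widetilde{\Omega}_n)}$ is exactly Corollary~\ref{complete}.
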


Complete  information about the components of the graphs in $\mathcal{G}_0$, for $3\leq n\leq 10$ can be found within the proofs of Theorems \ref{thm:11}, \ref{thm:15}  and \ref{thm:17}, taking into account Lemma \ref{component-quotient} for $P_0(A_n)$.
In particular, looking at the details, one can easily checks that  all the components of $X_0 \in\mathcal{G}_0$ apart from one are isolated vertices (complete graphs when $X_0=P_0(A_n)$)  if and only if $n\geq 11$ or $n=3.$

 \begin{corC} Let  $n\geq 3$  and $A$ as in \eqref{A}. Then the following facts are equivalent:
 \begin{itemize}
 \item[(i)] $P(A_n)$ is $2$-connected;
  \item[(ii)] $P_0(A_n)$ is connected;
 \item[(iii)] $\widetilde{P}_0(A_n)$ is connected;
 \item[(iv)] $P_0(\mathcal{T}(A_n))$ is connected;
 \item[(v)] $n=3$ or $ n\notin A$.
\end{itemize}
 In particular, there exist infinitely many $n$ such that $P(A_n)$ is $2$-connected.
 \end{corC}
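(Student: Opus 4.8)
The plan is to derive Corollary C as an almost immediate consequence of Theorems A and B together with the relations already established in the excerpt, so that essentially no new graph-theoretic work is needed.

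\smallskip
\textbf{Step 1: the equivalence of (i), (ii), (iii).} The equivalence (i)$\Leftrightarrow$(ii) is recorded in the excerpt itself: every $X\in\mathcal{G}$ is connected and $X$ is $2$-connected if and only if $X_0$ is connected; specialising $X=P(A_n)$ gives (i)$\Leftrightarrow$(ii). The equivalence (ii)$\Leftrightarrow$(iii) is the equality $c_0(A_n)=\widetilde{c}_0(A_n)$ from \eqref{connected}, which reflects that $\pi\colon P_0(A_n)\to\widetilde{P}_0(A_n)$ is a pseudo-covering (hence component-equitable) homomorphism onto a tame quotient; connectedness of one graph is equivalent to connectedness of the other. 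So after this step it remains to compare (iii), (iv), (v).

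\smallskip
\textbf{Step 2: (iv)$\Leftrightarrow$(v) and (iii)$\Leftrightarrow$(v) via Theorem A.} By \eqref{connected} we have $c_0(\mathcal{O}(A_n))\leq c_0(\mathcal{T}(A_n))\leq \widetilde{c}_0(A_n)=c_0(A_n)$, and each of (iii) and (iv) says the corresponding count equals $1$. The cleanest route is to read off from the tables in Theorem A exactly when $c_0(\mathcal{T}(A_n))=1$ and when $\widetilde{c}_0(A_n)=1$. For $3\le n\le 10$, Table~1 shows $c_0(\mathcal{T}(A_n))=1$ only for $n=3$, and $\widetilde{c}_0(A_n)=1$ only for $n=3$; and among $3\le n\le 10$ every value $n\ne 3$ lies in $A$ (one checks $4,5,6,7,8,9,10$ are all of the form $p$, $p+1$, $p+2$, $2p$ or $2p+1$), while $3\notin A$ would have to be argued — in fact $3=2+1\in P+1$, so $3\in A$; hence the clause ``$n=3$ or $n\notin A$'' is what correctly isolates $n=3$ in this range. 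For $n\ge 11$, Table~2 lists $c_0(\mathcal{T}(A_n))=1$ precisely in the last row, labelled $n\notin A$, and in that same row $\widetilde{c}_0(A_n)=1$; in every other row (each of which corresponds to $n\in A$) both $c_0(\mathcal{T}(A_n))\ge 2$ and $\widetilde{c}_0(A_n)\ge 2$. This gives (iii)$\Leftrightarrow$(v) and (iv)$\Leftrightarrow$(v) simultaneously, and closes the cycle of equivalences. I would phrase this as: by \eqref{connected} it suffices to show (v)$\Rightarrow$(iii) and (iv)$\Rightarrow$(v), both of which are direct table inspections, while (iii)$\Rightarrow$(iv) is forced by the sandwich inequality.

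\smallskip
\textbf{Step 3: infinitude of $n$ with $P(A_n)$ $2$-connected.} It remains to exhibit infinitely many $n$ with $n\ne 3$ and $n\notin A$, i.e.\ infinitely many $n$ such that none of $n,n-1,n-2,\tfrac n2,\tfrac{n-1}2$ is prime (the half-integer conditions being vacuous unless the relevant parity holds). This is a sieving/elementary number theory argument: one wants infinitely many $n$ for which $n$, $n-1$, $n-2$ are all composite and, when $n$ is even, $n/2$ is composite, and when $n$ is odd, $(n-1)/2$ is composite. A quick way is to take $n$ in a suitable residue class forcing small prime factors into each of these five quantities, or simply to note that the set of $n\in A$ has density zero (a finite union of shifted/scaled copies of the primes, which have density zero) while $\{3\}$ is finite, so the complement is infinite; I would prefer the soft density argument as it needs nothing beyond the prime number theorem or even just Chebyshev-type bounds. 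This step is the only place where genuinely new (though routine) input is required, and it is the mild ``obstacle'' — everything else is bookkeeping against Theorem A.

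\smallskip
\textbf{Expected main difficulty.} There is no serious obstacle: the real content lives in Theorems A and B, which we are permitted to assume. The only care needed is (a) correctly checking that $3\in A$ so that the clause in (v) genuinely does single out $n=3$ among $n\in A$, and (b) verifying by inspection that every non-final row of Table~2, and every column value $\ne 3$ in Table~1, indeed gives component count $\ge 2$ — a finite check. The infinitude claim is then a one-line density remark.
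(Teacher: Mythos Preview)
Your argument for the equivalences (i)--(v) is correct and matches the paper's approach exactly: both simply read off Tables~1 and~2 of Theorem~A, using the chain \eqref{connected} and the observation that every $n$ with $4\le n\le 10$ lies in $A$ (and that $3\in P+1\subseteq A$, so the clause ``$n=3$ or $n\notin A$'' is what singles out $n=3$).

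The only real difference is in Step~3. The paper does not use a density argument; instead it exhibits the explicit infinite family $\{4k^2:k\ge 2\}\subseteq\mathbb{N}\setminus A$, checking directly that $4k^2$ is even and at least $16$ (hence not in $P\cup(P+2)\cup(2P+1)$), that $\tfrac{n}{2}=2k^2$ is composite (hence $n\notin 2P$), and that $n-1=(2k-1)(2k+1)$ is composite (hence $n\notin P+1$). Your density argument is perfectly valid and requires less case-checking, since each of $P,\,P+1,\,P+2,\,2P,\,2P+1$ has density zero; the paper's explicit construction, on the other hand, has the side benefit of pinpointing $n=16$ as the smallest nontrivial example, which is recorded separately there.
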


\begin{rem}{\rm \begin{itemize}
\item[(a)]$n=16$ is the minimum $n$ for which $P(A_n)$ is $2$-connected with $A_n$ non-abelian simple.
\item[(b)] Exhibiting an infinite family of simple groups with $2$-connected power graph, we give a positive
answer to the question posed by Pourgholi, Yousefi-Azari and Ashrafi \cite[Question 13]{pya}.
\item[(c)] Since Corollary C confirms the role of the proper power type graph in the comprehension of the $2$-connectivity of the power graph for the alternating group,
we wonder if an analogous construction is possible for any simple group.

 \end{itemize}}
\end{rem}
 As recalled before, for the symmetric group the $2$-connectivity
of the order graph is equivalent to the $2$-connectivity of the power graph. This does not happen for
the alternating group.
\begin{corD} Let $n\geq 3.$ Then $\mathcal{O}(A_n)$ is $2$-connected if and only if either $n=3$ or none of $n, n-1$ and $ n-2$ is a prime. The maximum number of components of $\mathcal{O}_0(A_n)$ is $3$ and it is realised if and only if $n=6$ or both $n$ and $n-2$ are prime.
\end{corD}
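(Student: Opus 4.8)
The plan is to read off everything from Theorem A. Since $\mathcal{O}(A_n)\in\mathcal{G}$, it is $2$-connected if and only if its proper graph $\mathcal{O}_0(A_n)$ is connected, that is, if and only if $c_0(\mathcal{O}(A_n))=1$; and all the values of $c_0(\mathcal{O}(A_n))$ have been tabulated in Tables~1 and~2. So the proof is a bookkeeping exercise: single out the entries equal to $1$ (for the $2$-connectivity claim) and the entries equal to the table maximum $3$ (for the second claim), then translate the corresponding hypotheses into the stated primality conditions.

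For $3\le n\le 10$ I would use Table~1 directly: $c_0(\mathcal{O}(A_n))=1$ occurs exactly for $n=3$ and $n=10$, and among $4\le n\le 10$ the condition ``none of $n,n-1,n-2$ is a prime'' holds only for $n=10$, since for each $n\in\{4,5,6,7,8,9\}$ one of $n,n-1,n-2$ lies in $\{2,3,5,7\}$. This matches ``$n=3$ or none of $n,n-1,n-2$ is a prime''. Likewise the entry $3$ appears in Table~1 exactly for $n\in\{5,6,7\}$, and for $3\le n\le 10$ the numbers $n$ and $n-2$ are simultaneously prime exactly for $n\in\{5,7\}$, so the small-case $3$-entries are precisely $n=6$ and $n\in\{5,7\}$.

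For $n\ge 11$ I would run through Table~2. In the $c_0(\mathcal{O}(A_n))$ column the value $1$ appears exactly in the three rows with hypotheses ``$\frac{n-1}{2}\in P$, $n\notin P$, $n-2\notin P$'', ``$\frac n2\in P$, $n-1\notin P$'' and ``$n\notin A$''. The point to verify is that the disjunction of these three hypotheses is equivalent to ``none of $n,n-1,n-2$ is a prime''. If $\frac n2\in P$ then $n$ is even and $n\ge 12$, so $n\notin P$ and, $n-2$ being even and $\ge 10$, also $n-2\notin P$, while $n-1\notin P$ is the stated extra hypothesis; symmetrically, $\frac{n-1}{2}\in P$ forces $n-1$ even and $\ge 10$, hence $n-1\notin P$, the conditions $n\notin P$ and $n-2\notin P$ being stated; and by \eqref{A}, $n\notin A$ means exactly that none of $n,n-1,n-2$ is prime and neither $\frac n2$ nor $\frac{n-1}{2}$ is prime. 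Conversely, if none of $n,n-1,n-2$ is prime, then precisely one of the mutually exclusive cases ``$\frac n2\in P$'', ``$\frac{n-1}{2}\in P$'', ``neither $\frac n2$ nor $\frac{n-1}{2}$ is in $P$'' holds (the first two cannot both hold, one requiring $n$ even and the other $n$ odd), and this places $n$ in one of the three rows above; in particular $n$ cannot lie in any of the remaining rows of Table~2, each of which requires one of $n,n-1,n-2$ to be prime. Since those remaining rows all carry the value $2$ or $3$ in that column, this settles the first assertion for $n\ge 11$, hence for all $n\ge 3$.

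For the maximum, inspecting both tables shows $3$ is the largest value occurring in the $c_0(\mathcal{O}(A_n))$ column, so $\max_{n\ge 3}\,c_0(\mathcal{O}(A_n))=3$. It is attained for $n\in\{5,6,7\}$ by Table~1 and, for $n\ge 11$, by Table~2 exactly in the row ``$n,n-2\in P$''. Combining with the remark that ``$n$ and $n-2$ both prime'' forces $n\ge 5$ and conversely accounts for $n=5,7$ and every larger twin-prime pair, we obtain that $c_0(\mathcal{O}(A_n))=3$ if and only if $n=6$ or both $n$ and $n-2$ are prime. I do not anticipate any genuine difficulty here: the whole statement is a consequence of Theorem A, and the only care needed is the row-by-row translation of the primality hypotheses for $n\ge 11$, where one must keep in mind that $n$ even (resp. $n-1$ even) with $n\ge 11$ already rules out $n,n-2$ (resp. $n-1$) being prime.
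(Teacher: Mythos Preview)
Your proposal is correct and follows essentially the same approach as the paper: the paper derives Corollary~D from Theorem~\ref{thm:11} and Corollary~\ref{thm:23}, the latter being itself just a repackaging of the $c_0(\mathcal{O}(A_n))$ column of Tables~1 and~2, which is exactly what you read off directly. Your row-by-row translation of the primality hypotheses is sound; the only point the paper makes slightly more explicit elsewhere (in the proof of Theorem~A) is that $P\cap(P+2)\cap(2P+1)=\varnothing$ for $n\ge 11$, ensuring the rows of Table~2 are mutually exclusive, but your argument does not actually need this since you correctly observe that every row carrying a value $\neq 1$ already requires one of $n,n-1,n-2$ to be prime.
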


%As said above, we heavily base our arguments on the general results obtained in
%\cite{bis} for quotient graphs. For short,  we usually do not recall them explicitly
%but give at every use the exact reference.
\vskip 0.4 true cm
\section{ The procedure for computing $c_0(A_n)$ }\label{Preliminaries}
\vskip 0.4 true cm
In this section, we explain the terminology involved in  the procedure which we are going to use for the computation of $c_0(A_n)$. We also present some results about the components of the graphs in $\mathcal{G}_0.$

%briefly recall the main notation and the definitions given
%in  \cite{bis}, strictly necessary for the purpose of the paper. For  more details
%and  properties the reader may refer to Sections 2, 4, 5 and 6  in \cite{bis}.

\subsection{Permutations of a fixed type}

Let  $T=[m_1^{t_1},..., m_k^{t_k}]\in\mathcal{T}(A_n)$.
The number of permutations
 of type $T=[m_1^{t_1},..., m_k^{t_k}]$ in $A_n$ is the same
as the number of this type in $S_n$ and is given by

\begin{equation}\label{count}\mu_T(A_n)=\dfrac{n!}{m_1^{t_1}\cdots m_k^{t_k}t_1!\cdots t_k!}
\end{equation}

while the number $\mu_T[A_n]$ of vertices  of type $T$  in $[A_n]$ coincides with $k_{\widetilde{P}_0(A_n)(T)}$ and thus,
 by \cite[Lemma 6.5 ]{bis}),  is  given by

\begin{equation}\label{countilde}\mu_T[A_n]=\dfrac{\mu_T(A_n)}{\phi(o(T))}.
\end{equation}

\subsection{Components of graphs in $\mathcal{G}_0$}  Let $\mathcal{G}_0$ be the set of graphs defined in \eqref{G0}.
Since we do not want to reduce our research to a mere counting of components, we need to collect some information about the components of the graphs in $\mathcal {G}_0$.

\begin{prop}\label{hom-con}  Let $\varphi:X \rightarrow Y$ be a homomorphism between the graphs $X$ and $Y$.
 \begin{itemize}
\item[(i)] If $\hat{X}$ is a complete subgraph of $X$, then $\varphi(\hat{X})$ is a complete subgraph of $Y$.
\item[(ii)] Let $\varphi$ be pseudo-covering.
\begin{itemize}
\item[(a)] If $C$ is a component of $X$, then $\varphi(C)$ is a component of $Y$.
\item[(b)] For every vertex $x\in V_X$, $\varphi(C_{X}(x))=C_{Y}(\varphi(x))\cong C_{X}(x)/\hspace{-1mm}\sim_{\varphi}$.
\end{itemize}
\end{itemize}
 \end{prop}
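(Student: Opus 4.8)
The statement to prove is Proposition~\ref{hom-con}, which collects basic functorial properties of graph homomorphisms: that homomorphisms send complete subgraphs to complete subgraphs, and that pseudo-covering homomorphisms send components to components, with an explicit identification of the image component as a quotient.

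\textbf{Plan for part (i).} This is essentially immediate from the definition of homomorphism together with the convention that every graph is reflexive. Let $\hat{X}$ be a complete subgraph, so $V_{\hat X}$ carries every loop and every proper edge between its vertices. Given $y_1,y_2\in \varphi(V_{\hat X})$, pick preimages $x_1,x_2\in V_{\hat X}$. If $y_1=y_2$ the loop $\{y_1\}$ lies in $E_Y$ by reflexivity; if $y_1\neq y_2$ then $x_1\neq x_2$, so $\{x_1,x_2\}\in E_{\hat X}\subseteq E_X$, and since $\varphi$ is a homomorphism $\{\varphi(x_1),\varphi(x_2)\}=\{y_1,y_2\}\in E_Y$. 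Hence $\{y_1,y_2\}$ is an edge of $\varphi(\hat X)$, so $\varphi(\hat X)$ contains all possible edges among its vertices, i.e.\ it is complete. I would state this in one short paragraph; there is no obstacle here.

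\textbf{Plan for part (ii).} First I would establish (b) and then deduce (a). Fix $x\in V_X$ and write $C=C_X(x)$. Since $\varphi$ is a homomorphism, $\varphi(C)$ is a connected subgraph of $Y$ containing $\varphi(x)$, hence $\varphi(C)\subseteq C_Y(\varphi(x))$ as subgraphs (connectedness is preserved by homomorphic images: a walk in $C$ maps to a walk in $\varphi(C)$). For the reverse inclusion I would use that $\varphi$ is pseudo-covering, i.e.\ surjective and locally strong. Take any vertex $y'\in V_{C_Y(\varphi(x))}$; there is a path $\varphi(x)=y_0, y_1,\dots, y_m=y'$ in $Y$. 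Using local strength repeatedly, I lift this path: given a vertex $\tilde x_i\in \varphi^{-1}(y_i)\cap V_C$, from $\{\varphi(\tilde x_i),y_{i+1}\}=\{y_i,y_{i+1}\}\in E_Y$ and local strength applied to the pair $(\tilde x_i, x')$ for any $x'\in\varphi^{-1}(y_{i+1})$, I obtain $\tilde x_{i+1}\in \varphi^{-1}(y_{i+1})$ with $\{\tilde x_i,\tilde x_{i+1}\}\in E_X$; since $\tilde x_i\in V_C$ this forces $\tilde x_{i+1}\in V_C$. Starting from $\tilde x_0=x$, induction yields a vertex of $C$ over $y'$, so $y'\in V_{\varphi(C)}$. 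Having shown $\varphi(C)=C_Y(\varphi(x))$ as vertex sets, equality as subgraphs follows because the edge set of a component of $Y$ is determined by its vertex set, and every edge of $\varphi(C)$ is an edge of $Y$. Finally, for the isomorphism $\varphi(C)\cong C/\!\sim_\varphi$, I would observe that the map induced by $\varphi$ from the vertex set of $C/\!\sim_\varphi$ (the $\varphi$-cells meeting $V_C$) to $V_{\varphi(C)}$ is a well-defined bijection, and check it is a $2$-homomorphism in both directions: surjectivity of the induced edge map uses local strength exactly as above, and injectivity on cells is immediate. For (a): by (b), $\varphi(C)=C_Y(\varphi(x))$ is a component of $Y$, which is what (a) asserts (for $C$ a component, choose $x\in V_C$ so $C=C_X(x)$).

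\textbf{Main obstacle.} The only genuinely delicate point is the path-lifting argument in (ii)(b): one must be careful that the lift stays inside the single component $C$ (this is automatic once the first lifted vertex is in $C$, since consecutive lifted vertices are joined by an edge of $X$) and that local strength is applied in the correct direction at each step, bearing in mind it only guarantees a lift of the \emph{second} coordinate. I expect this to be routine given the definitions in the excerpt, and in fact closely parallel to the proof that orbit complete homomorphisms are pseudo-covering cited from \cite[Propositions 5.9 and 6.9]{bub}; the rest of the proposition is formal.
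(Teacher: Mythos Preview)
Your argument is correct. The paper itself does not give a proof: it records (i) as ``straightforward'' and obtains (ii) by citing \cite[Proposition 5.11]{bub}. What you have written is precisely the unpacking of that citation, via the standard path-lifting argument for locally strong surjections, and it is sound.

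One small point of exposition: after you establish $V_{\varphi(C)}=V_{C_Y(\varphi(x))}$, the sentence ``equality as subgraphs follows because the edge set of a component of $Y$ is determined by its vertex set, and every edge of $\varphi(C)$ is an edge of $Y$'' only yields the inclusion $E_{\varphi(C)}\subseteq E_{C_Y(\varphi(x))}$. For the reverse inclusion you need one more use of local strength: given an edge $\{y_1,y_2\}$ of $C_Y(\varphi(x))$, pick $x_1\in V_C$ with $\varphi(x_1)=y_1$ (which exists by the vertex equality just proved) and apply local strength to produce $\tilde x_2\in\varphi^{-1}(y_2)$ with $\{x_1,\tilde x_2\}\in E_X$; then $\tilde x_2\in V_C$ and $\{x_1,\tilde x_2\}\in E_C$ maps to $\{y_1,y_2\}$. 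You clearly have this step in mind, since you invoke it a line later for the edge surjectivity in the isomorphism $\varphi(C)\cong C/\!\sim_\varphi$; just move it up so the graph equality is fully justified before you discuss the quotient.
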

 \begin{proof}  (i) is straightforward; (ii)  comes from \cite[Proposition 5.11]{bub}.
 \end{proof}

 \begin{lem}\label{component-quotient}  Let $\pi$ be the projection of $ P_0(A_n)$ on its quotient $\widetilde{P}_0(A_n) $.  Then,  the map from $\mathcal{C}_0 (A_n)$ to $\widetilde{\mathcal{C}}_0 (A_n) $ which associates to every $C\in \mathcal{C}_0 (A_n),$ the component $\pi(C) $ is a bijection. Given $\widetilde{C}\in \widetilde{\mathcal{C}}_0 (A_n) $, the set  of vertices  of the unique $C\in \mathcal{C}_0 (A_n)$ such that $\pi(C)=\widetilde{C}$ is given by $\pi^{-1}(V_{\widetilde{C}})$.
\end{lem}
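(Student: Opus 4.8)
The plan is to exploit the fact, established in the excerpt, that the projection $\pi:P_0(A_n)\to\widetilde P_0(A_n)$ is a pseudo-covering homomorphism (\cite[Lemma 3.6]{bis}), and to apply Proposition \ref{hom-con}(ii) directly. First I would invoke Proposition \ref{hom-con}(ii)(a): for every component $C\in\mathcal C_0(A_n)$, the image $\pi(C)$ is a component of $\widetilde P_0(A_n)$, so the assignment $C\mapsto\pi(C)$ is a well-defined map $\Phi:\mathcal C_0(A_n)\to\widetilde{\mathcal C}_0(A_n)$. For surjectivity, pick any $\widetilde C\in\widetilde{\mathcal C}_0(A_n)$, choose a vertex $[\psi]\in V_{\widetilde C}$, and pick $\psi\in\pi^{-1}([\psi])$; by Proposition \ref{hom-con}(ii)(b) we have $\pi(C_{P_0(A_n)}(\psi))=C_{\widetilde P_0(A_n)}([\psi])=\widetilde C$, so $\Phi$ hits $\widetilde C$.

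For injectivity, suppose $C,\overline C\in\mathcal C_0(A_n)$ with $\pi(C)=\pi(\overline C)=:\widetilde C$. Take $\psi\in V_C$; then $[\psi]\in V_{\widetilde C}$, so there is some $\overline\psi\in V_{\overline C}$ with $[\overline\psi]=[\psi]$, i.e. $\langle\psi\rangle=\langle\overline\psi\rangle$. Since $\psi$ and $\overline\psi$ generate the same cyclic subgroup, $\psi$ is a power of $\overline\psi$, so $\{\psi,\overline\psi\}$ is an edge of $P(A_n)$; as both lie in $A_n\setminus\{id\}$, it is an edge of $P_0(A_n)$. Hence $\psi$ and $\overline\psi$ are in the same component of $P_0(A_n)$, forcing $C=\overline C$. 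This proves $\Phi$ is a bijection.

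For the last assertion, fix $\widetilde C\in\widetilde{\mathcal C}_0(A_n)$ and let $C$ be the unique component with $\pi(C)=\widetilde C$. The inclusion $V_C\subseteq\pi^{-1}(V_{\widetilde C})$ is immediate since $\pi(V_C)=V_{\widetilde C}$. Conversely, if $\psi\in\pi^{-1}(V_{\widetilde C})$, then $[\psi]\in V_{\widetilde C}=\pi(V_C)$, so there is $\overline\psi\in V_C$ with $[\overline\psi]=[\psi]$; arguing exactly as above, $\{\psi,\overline\psi\}\in E_0$, so $\psi$ lies in the component of $\overline\psi$, which is $C$. Thus $V_C=\pi^{-1}(V_{\widetilde C})$, as claimed. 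Alternatively, one observes that the fibres of $\pi$ are connected (the map is tame), in fact each fibre $\pi^{-1}([\psi])$ induces a complete subgraph of $P_0(A_n)$ by \eqref{lato}, which gives the same conclusion more structurally.

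I do not expect a genuine obstacle here: the statement is essentially a packaging of the pseudo-covering property together with the elementary remark \eqref{lato} that an $[\,\cdot\,]$-fibre consists of mutually adjacent vertices. The only point requiring a little care is to be explicit that two generators of the same cyclic group are joined by an edge in $P_0(A_n)$ (not merely in $P(A_n)$), which is exactly what \eqref{lato} guarantees once one notes neither vertex is $id$; after that, both injectivity and the description of $V_C$ follow from the same one-line argument.
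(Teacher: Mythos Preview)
Your argument is correct. The paper's own proof is a one-line citation, ``Apply \cite[Lemma 3.7]{bis} to the group $A_n$'', so there is no internal argument to compare against; what you have written is precisely the unpacking of that external lemma, using only the facts already recorded in the present paper (that $\pi$ is pseudo-covering and tame, together with Proposition~\ref{hom-con}(ii) and \eqref{lato}). In substance your route and the cited lemma coincide: both rest on the observation that each fibre $\pi^{-1}([\psi])=\{\varphi:\langle\varphi\rangle=\langle\psi\rangle\}$ is a clique in $P_0(A_n)$, which simultaneously yields injectivity of $C\mapsto\pi(C)$ and the description $V_C=\pi^{-1}(V_{\widetilde C})$.
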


\begin{proof} Apply \cite[Lemma 3.7]{bis} to the group $A_n.$
\end{proof}

The above lemma says that, once the components of $\widetilde{P}_0(A_n)$ are known, it is immediate to recover those of $P_0(A_n).$ Moreover it leads to a useful result about isolated vertices in the graphs belonging to $\mathcal{G}_0$.

\begin{lem} \label{isolated}\noindent \begin{itemize}\item[(i)] The type
$T\in \mathcal{T}_0(A_n)$ is isolated in $P_0(\mathcal{T}(A_n))$ if and only
if each $[\psi]\in [A_n]_0$ of type $T$ is isolated in $\widetilde{P}_0(A_n)$.
\item[(ii)] If $m\in O_0(A_n)$ is isolated in $\mathcal{O}_0(A_n),$ then each vertex of
order $m$ is isolated in $\widetilde{P}_0(A_n)$.
\item[(iii)] If, for some $\psi\in A_n$, $[\psi]$ is
isolated in $\widetilde{P}_0(A_n)$, then $o(\psi)$ is a prime $p$ and the component of $P_0(A_n)$ containing $\psi$ is a complete graph on $p-1$ vertices.
\end{itemize}
\end{lem}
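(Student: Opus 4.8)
The plan is to prove the three statements by peeling off quotient maps, using the functorial facts already recorded in the excerpt, and then invoking the structural Lemma \ref{component-quotient} to descend from $\widetilde{P}_0(A_n)$ to $P_0(A_n)$. For (i), recall from the discussion preceding the statement that $\widetilde{t}\colon \widetilde{P}_0(A_n)\to P_0(\mathcal{T}(A_n))$ is a complete $2$-homomorphism (\cite[Proposition 5.4]{bis}) and, in the fusion-controlled case, an orbit homomorphism (\cite[Proposition 6.2]{bis}); since $A_n$ is fusion controlled, $\widetilde{t}$ is orbit complete, hence by \cite[Propositions 5.9 and 6.9]{bub} it is component equitable and pseudo-covering. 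Thus Proposition \ref{hom-con}(ii)(a) applies: if $[\psi]$ has type $T$ and $\widetilde{C}$ is its component in $\widetilde{P}_0(A_n)$, then $\widetilde{t}(\widetilde{C})$ is the component of $T$ in $P_0(\mathcal{T}(A_n))$. If $T$ is isolated, then $\widetilde{t}(\widetilde{C})=\{T\}$, and since $\widetilde{t}$ is component equitable the fibre over $T$ inside $\widetilde{C}$ has the same size $k$ for every component meeting $T$; but $\widetilde{t}^{-1}(T)$ consists precisely of the vertices of type $T$ (the fibre is the $\sim$-cell), and one checks via \eqref{lato} or the edge description of $\widetilde{P}_0(A_n)$ that two distinct vertices of the same type cannot be adjacent unless that type admits a proper power — which would give a proper edge in $P_0(\mathcal{T}(A_n))$ at $T$, contradicting isolation. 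Hence every vertex of type $T$ is itself isolated. The converse is immediate: if some $[\psi]$ of type $T$ is isolated in $\widetilde{P}_0(A_n)$, then $T=\widetilde{t}(\widetilde{C}(\ [\psi]\ ))$ is a component of $P_0(\mathcal{T}(A_n))$, i.e. $T$ is isolated.

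For (ii), factor $\widetilde{o}=o'\circ\widetilde{t}$ through the chain $\widetilde{P}_0(A_n)\xrightarrow{\widetilde{t}} P_0(\mathcal{T}(A_n))\xrightarrow{} \mathcal{O}_0(A_n)$, using that $\mathcal{O}_0(A_n)$ is a quotient of $P_0(\mathcal{T}(A_n))$ (\cite[Proposition 5.6]{bis}) and that $\widetilde{o}$ is a complete $2$-homomorphism (\cite[Proposition 4.2]{bis}). If $m$ is isolated in $\mathcal{O}_0(A_n)$, then any type $T$ with $o(T)=m$ must be isolated in $P_0(\mathcal{T}(A_n))$: a proper edge at $T$ would, under the complete $2$-homomorphism $P_0(\mathcal{T}(A_n))\to\mathcal{O}_0(A_n)$, produce a proper edge at $m$ unless the two endpoints have the same order $m$ — but one also checks directly that a type and a proper power of it cannot have equal order (a proper power $T^a$ satisfies $o(T^a)=o(T)/\gcd(a,o(T))<o(T)$ when $\gcd(a,o(T))\neq 1$, using the definition of proper power), so no such edge exists. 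Then apply part (i) to conclude every vertex of order $m$, having some isolated type, is isolated in $\widetilde{P}_0(A_n)$.

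For (iii), suppose $[\psi]$ is isolated in $\widetilde{P}_0(A_n)$. First I would show $o(\psi)$ is prime: if $o(\psi)$ had a proper divisor $d$ with $1<d<o(\psi)$, then some power $\psi^a$ with $o(\psi^a)=d$ would satisfy $[\psi^a]\neq[\psi]$ and $[\psi^a]\neq[1]$, and $\{[\psi],[\psi^a]\}$ would be a proper edge in $\widetilde{P}_0(A_n)$ — provided such a $\psi^a$ still lies in $A_n$, which it does since $A_n$ is a subgroup — contradicting isolation. Hence $o(\psi)=p$ is prime. Now by Lemma \ref{component-quotient}, the component $C$ of $P_0(A_n)$ with $\pi(C)=\{[\psi]\}$ has vertex set $\pi^{-1}(\{[\psi]\})=\langle\psi\rangle\setminus\{id\}$, which has $p-1$ elements, all generating the same cyclic group $\langle\psi\rangle$; any two such elements are powers of one another, so $C$ is the complete graph on these $p-1$ vertices. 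The main obstacle I anticipate is the bookkeeping around \emph{equal-order} (resp. \emph{equal-type}) adjacencies: one must be careful that the only way two distinct vertices of $\widetilde{P}_0(A_n)$ of the same order (or of the same type) can be adjacent is via a proper power relation, and that such a relation forces a strict drop in order, so it cannot occur; this is where the precise definitions of proper power and of the power type graph, together with the pseudo-covering property of $\pi$ from \eqref{lato}, have to be used carefully rather than waved at.
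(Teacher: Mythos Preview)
Your argument is correct, and in fact it unpacks what the paper leaves as bare citations: the paper's entire proof reads ``(i) follows from \cite[Proposition 6.3]{bis}; (ii) and (iii) follow from \cite[Lemma 4.5]{bis}.'' You are essentially reproving those cited results inline, using exactly the machinery (pseudo-covering, complete $2$-homomorphism, Lemma~\ref{component-quotient}) that the excerpt records.

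Two small cleanups. In (i), the appeal to component equitability is a detour you do not need: once $\widetilde{t}(\widetilde C)=\{T\}$, every vertex of $\widetilde C$ has type $T$, and distinct vertices of the same type are \emph{never} adjacent in $\widetilde P_0(A_n)$, with no caveat about proper powers. Indeed, if $[\psi]\neq[\varphi]$ and $\varphi=\psi^m$, then $\gcd(m,o(\psi))\neq 1$ (else $\langle\psi\rangle=\langle\varphi\rangle$), hence $o(T_\psi^m)=o(T_\psi)/\gcd(m,o(T_\psi))<o(T_\psi)$, so $T_\varphi\neq T_\psi$. The paper itself uses this fact later (proof of Theorem~\ref{thm:15}: ``in a quotient power graph there exists no proper edge incident to vertices having the same type''). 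Thus $\widetilde C=\{[\psi]\}$ directly, and your anticipated ``obstacle'' dissolves. In (ii), your order computation $o(T^a)=o(T)/\gcd(a,o(T))$ is the right lever and suffices; the factorisation through $P_0(\mathcal T(A_n))$ is clean and matches how the paper organises its quotient chain.
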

\begin{proof}(i) follows from \cite [Proposition 6.3]{bis};  (ii) and  (iii) follow from \cite [Lemma  4.5]{bis}.
\end{proof}
Let $C\in \widetilde{\mathcal{C}}_0 (A_n)$ and $T\in \mathcal{T}_0(A_n)$.  Applying the definitions given in Section \ref{components-intro} to the graphs $X=\widetilde{P}_0(A_n)$ and $Y=P_0(\mathcal{T}(A_n))$ and to the homomorphism $\widetilde{t}$, we have that $k_{C}(T)$ is the multiplicity of $T$ in $C$, that is, the number of vertices $[\psi]\in V_C$ such that $T_{\psi}=T$; $\widetilde{\mathcal{C}}_0(A_n)_{T}$ is the set of components in $\widetilde{\mathcal{C}}_0 (A_n)$ which are admissible for $T$.
Put $\widetilde{c}_0(A_n)_{T}=|\widetilde{\mathcal{C}}_0(A_n)_{T}|.$
By \cite[Lemma 6.4]{bis} we deduce immediately the following result.

\begin{lem}\label{C(T)} Let $T\in \mathcal{T}_0(A_n)$
and $C\in \widetilde{\mathcal{C}}_0(A_n)_{T}$. Then the following facts hold:
\begin{itemize}\item[(i)] $\mathcal{T}(C)=V_{C(T)}$;
\item[(ii)] $\widetilde{o}(C)$ is a connected subgraph of the component of $\mathcal{O}_0(A_n)$ containing $o(T)$.
\end{itemize}
\end{lem}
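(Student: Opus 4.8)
The final statement to prove is Lemma \ref{C(T)}, which asserts two facts about a component $C \in \widetilde{\mathcal{C}}_0(A_n)_T$ admissible for a type $T$: (i) the set of types admissible for $C$ equals the vertex set of the component $C(T)$ of $P_0(\mathcal{T}(A_n))$ obtained as $\widetilde{t}(C)$, and (ii) $\widetilde{o}(C)$ sits inside the component of $\mathcal{O}_0(A_n)$ through $o(T)$.

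\medskip

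\textbf{Plan of proof.} The strategy is to reduce both parts to the general machinery of \cite{bub} and to the specific properties of the homomorphisms $\widetilde{t}$ and $\widetilde{o}$ already established in \cite{bis}, as the statement itself advertises (``By \cite[Lemma 6.4]{bis} we deduce immediately''). So the proof should essentially be an application: I would set $X = \widetilde{P}_0(A_n)$, $Y = P_0(\mathcal{T}(A_n))$, and $\varphi = \widetilde{t}$, recall that $\widetilde{t}$ is a complete $2$-homomorphism (\cite[Proposition 5.4]{bis}) and, in the fusion controlled case, an orbit homomorphism (\cite[Proposition 6.2]{bis}); since $A_n$ is fusion controlled, $\widetilde{t}$ is an orbit complete homomorphism, hence component equitable and pseudo-covering by \cite[Propositions 5.9 and 6.9]{bub}. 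For part (i): because $\widetilde{t}$ is pseudo-covering, Proposition \ref{hom-con}(ii)(a) tells us $\widetilde{t}(C)$ is a component of $Y$; by definition $\widetilde{t}(C)$ is a subgraph whose vertex set is $\widetilde{t}(V_C) = \mathcal{T}(C)$, the set of types admissible for $C$. Identifying this component with $C(T)$ — the component of $P_0(\mathcal{T}(A_n))$ admissible for $T$, equivalently containing the vertex $T$ since $C$ is admissible for $T$ — gives $\mathcal{T}(C) = V_{C(T)}$. For part (ii): compose with the complete $2$-homomorphism $\widetilde{o}: \widetilde{P}_0(A_n) \to \mathcal{O}_0(A_n)$ (or view $\widetilde{o}$ on $P_0(\mathcal{T}(A_n))$); since the image of a connected subgraph under a homomorphism is connected, $\widetilde{o}(C)$ is a connected subgraph of $\mathcal{O}_0(A_n)$, and it contains $o(T)$ because $C$ is admissible for $T$; a connected subgraph containing $o(T)$ lies in the component of $\mathcal{O}_0(A_n)$ through $o(T)$.

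\medskip

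\textbf{Key steps, in order.} First, record that $A_n$ is fusion controlled (already stated in the text), so \cite[Proposition 6.2]{bis} applies and $\widetilde{t}$ is an orbit homomorphism; combined with completeness this makes it orbit complete, hence pseudo-covering and component equitable by \cite[Propositions 5.9, 6.9]{bub}. Second, invoke Proposition \ref{hom-con}(ii) to conclude $\widetilde{t}(C)$ is a component of $P_0(\mathcal{T}(A_n))$ and unwind the definition of the image subgraph to see its vertex set is $\mathcal{T}(C)$. Third, pin down which component this is: since $T \in \mathcal{T}(C)$ (admissibility of $C$ for $T$), the component $\widetilde{t}(C)$ contains $T$, so it equals $C(T)$; this yields (i). Fourth, for (ii), use that $\widetilde{o}$ is a homomorphism from $\mathcal{O}_0(A_n)$'s source (either $\widetilde{P}_0(A_n)$ via $\widetilde{o} = \widetilde{o}\circ\widetilde{t}$-type composition, or directly from $P_0(\mathcal{T}(A_n))$ using \cite[Proposition 5.6]{bis}), that homomorphic images of connected graphs are connected, and that $o(T) \in V_{\widetilde{o}(C)}$, to place $\widetilde{o}(C)$ inside the component of $\mathcal{O}_0(A_n)$ containing $o(T)$.

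\medskip

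\textbf{Main obstacle.} Frankly, there is almost no obstacle: the lemma is flagged as an immediate consequence of \cite[Lemma 6.4]{bis}, and the only genuine content is checking that the hypotheses of that external lemma (fusion controlled, so $\widetilde{t}$ orbit complete) are met for $G = A_n$ and that $C$ being admissible for $T$ forces $T$ (resp. $o(T)$) into the relevant image. The one point requiring a sentence of care is the identification ``the component $\widetilde{t}(C)$ equals the component $C(T)$ used in Section \ref{components-intro}'': one must note that $C(T)$ was defined as the component admissible for $T$, which is well defined precisely because of component equitability of $\widetilde{t}$ — distinct components of $\widetilde{P}_0(A_n)$ map to distinct components of $P_0(\mathcal{T}(A_n))$, or at least the one containing $T$ is unambiguous — and that since $C \in \widetilde{\mathcal{C}}_0(A_n)_T$ we indeed have $T \in \widetilde{t}(V_C)$. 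Everything else is bookkeeping with the definitions of image subgraph, admissibility, and connectedness under homomorphisms, so the write-up can be kept to a few lines by citing \cite[Lemma 6.4]{bis} directly.
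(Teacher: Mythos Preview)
Your proposal is correct and follows the same route as the paper, which simply cites \cite[Lemma 6.4]{bis}; you have essentially unpacked that citation using the pseudo-covering property of $\widetilde{t}$ (via fusion control of $A_n$) together with Proposition~\ref{hom-con}. One small correction: in the paper's notation $C(T)$ is not ``the component admissible for $T$'' but simply $C_{P_0(\mathcal{T}(A_n))}(T)$, the component of $P_0(\mathcal{T}(A_n))$ containing the vertex $T$ (see Section~\ref{components-intro}), so no appeal to component equitability is needed for it to be well defined --- your identification $\widetilde{t}(C)=C(T)$ then follows immediately from $T\in\widetilde{t}(V_C)$.
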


 From \cite[Lemma 6.6]{bis}, we know that
\begin{equation}\label{term} \widetilde{c}_0(A_n)_{T}={\frac{\mu_{T} [A_n]}{k_{C}(T)}}.\end{equation}
By~\cite[Theorem A ]{bis} applied to the alternating group, we get the
following reediting of the Procedure ~\cite[6.10]{bub}  for counting
the number of components of $\widetilde{P}_0(A_n)$.
\vskip 0.4 true cm

 \begin{proc} {\bf Procedure to compute $\widetilde {c}_0(A_n)$ } \label{procedureA_n}
{\rm

\begin{itemize}\item[ I)] {\it  Selection of types $T_i$ and components $C_i$}
\end{itemize}
\begin{itemize}
\item[ {\it Start}] {\rm :  Pick  $T_1\in \mathcal{T}_0(A_n)$ and choose any
$C_1\in \widetilde{\mathcal{C}}_0(A_n)_{T_1}$.}

\item[ {\it Basic step}]{\rm :  Given $T_1,\dots, T_i\in \mathcal{T}_0(A_n) $
and $C_1,\dots,C_i\in\widetilde{\mathcal{C}}_0(A_n)$ such
that $C_j\in \widetilde{\mathcal{C}}_0(A_n)_{T_j}\ (1\leq j\leq i)$,
choose any $T_{i+1}\in \mathcal{T}_0(A_n) \setminus \bigcup_{j=1}^i \mathcal{T}(C_{j})$
and any $C_{i+1}\in \widetilde{\mathcal{C}}_0(A_n)_{T_{i+1}}.$}

\item[ {\it Stop}]{\rm: The procedure stops in $c_0(\mathcal{T}(A_n))$ steps.}
\end{itemize}
\medskip

\begin{itemize}\item[ II)] {\it  The value of  $\widetilde {c}_0(A_n)$ }
\end{itemize}
{\rm Compute the integers
\begin{equation}\label{term2} \widetilde{c}_0(A_n)_{T_j}={\frac{\mu_{T_j} [A_n]}{k_{C_j}(T_j)}} \quad (1\leq j\leq c_0(\mathcal{T}(A_n))
\end{equation}
 and sum them up to get  $\widetilde {c}_0(A_n)$. }}
\end{proc}

The complete freedom in the choice of the
$C_j\in \widetilde{\mathcal{C}}_0(A_n)_{T_j}$  allows us
to compute  each $ \widetilde{c}_0(A_n)_{T_j}={\frac{\mu_{T_j} [A_n]}{k_{C_j}(T_j)}}$,
selecting $C_j$ as the component containing $[\psi]$, for $[\psi]$ chosen
as preferred among the $\psi\in A_n$ with $T_{\psi}=T_j.$ We will
apply  this fact with no further mention.  We emphasize also that the computing of $\mu_{T_j} [A_n]$ is made easy by \eqref{countilde} and \eqref{count}.
Remarkably, the number
$c_0(\mathcal{T}(A_n))$ counts the steps of the procedure.

 %------------------------------------------------------------------------------------%
\section{ Some small degrees}

\vskip 0.4 true cm
%------------------------------------------------------------------------------------%
In this section, we point out some general properties  of the graphs in $\mathcal {G}_0$ and use them to determine $c_0(A_n)=\widetilde {c}_0(A_n)$, $c_0(\mathcal{T}(A_n))$ and $c_0(\mathcal{ O}(A_n)),$ for $3\leq n\leq 7$. Since
$[A_3]_0=\{[(1~2~3)]\}$, we trivially  have $\widetilde{c}_0(A_3)=c_0(\mathcal{T}(A_3))=c_0(\mathcal{O}(A_3))=1$.
\begin{lem}\label{p-isolated} Let $p$ be a prime number and $n\in\{p, p+1,p+2\}$, with $n\geq 4$. Then the following facts hold:
\begin{itemize}
\item[(i)] $p\in O_0(A_n)$ and is isolated in $\mathcal{O}_0(A_n).$ If $[\psi]\in[A_n]_0$ has order $p$, then $[\psi]$
is isolated in $\widetilde{P}_0(A_n)$ and $T_{\psi}$ is isolated in $P_0(\mathcal{T}(A_n))$;
\item[(ii)] the number $c_p$ of components of $\widetilde{P}_0(A_n)$ containing elements of order $p$ is given by the following table.
 \begin{table}[ht]
\caption{}
\label{eqtable3}
\begin{center}
\begin{tabular}{|c|c|}
\hline
$n$  & $c_p$ \\
\hline
\vspace{-3mm}
&\\
$p$ & $(p-2)!$ \\
\vspace{-3mm}
&\\
\hline
\vspace{-3mm}
&\\
$p+1$ & $(p+1)(p-2)!$  \\
\vspace{-3mm}
&\\
\hline
\vspace{-3mm}
&\\
$p+2$, $p$ \hbox {odd} & $\frac{(p+2)(p+1)(p-2)!}{2}$ \\
\vspace{-3mm}
&\\
 \hline
$4$ & $3$\\
\hline
\end{tabular}
\end{center}
\end{table}

\end{itemize}
\end{lem}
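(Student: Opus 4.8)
For part (i), the plan is to first observe that since $n \in \{p, p+1, p+2\}$ with $p$ prime, a $p$-cycle lies in $S_n$ and, being an even permutation, in $A_n$; hence $p \in O_0(A_n)$. To see that $p$ is isolated in $\mathcal{O}_0(A_n)$, I would argue that any $m \in O_0(A_n)$ with $p \mid m$ or $m \mid p$ and $m \neq p$ would force either $m = 1$ (excluded) or $m$ a proper multiple of $p$, so $m \geq 2p > n$; but no element of $A_n$ has order exceeding $n$ in this range (an element of order $2p$ needs disjoint cycles whose lengths have lcm $2p$, requiring at least $2+p > n$ points, and an element of order $kp$ for $k \geq 3$ needs even more), a contradiction. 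Once $p$ is isolated in $\mathcal{O}_0(A_n)$, Lemma \ref{isolated}(ii) gives that each $[\psi]$ of order $p$ is isolated in $\widetilde{P}_0(A_n)$, and then Lemma \ref{isolated}(i) gives that $T_\psi$ is isolated in $P_0(\mathcal{T}(A_n))$. (Alternatively, isolation of $T_\psi$ follows directly: since $o(T_\psi) = p$ is prime, $T_\psi$ has no proper power.)

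For part (ii), I would use that, by Lemma \ref{isolated}(iii), every $[\psi]$ of order $p$ lies in a component of $P_0(A_n)$ that is a complete graph on $p-1$ vertices, namely the vertices of $\langle\psi\rangle \setminus \{id\}$; equivalently, by Lemma \ref{component-quotient}, the component of $\widetilde{P}_0(A_n)$ containing $[\psi]$ is the single isolated vertex $[\psi]$. Hence $c_p$ equals the number of distinct cyclic subgroups of order $p$ in $A_n$, which is $s_p/\phi(p) = s_p/(p-1)$ where $s_p$ is the number of elements of order $p$. The number of such cyclic subgroups is also the number of vertices of order $p$ in $[A_n]_0$, so $c_p = \mu_{T}[A_n]$ summed over the types $T$ of order $p$; but in each of the relevant cases there is only one such type. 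When $n = p$, the only element order equal to $p$ comes from type $T = [p]$, and \eqref{countilde} together with \eqref{count} gives $\mu_T[A_n] = \frac{p!}{p \cdot (p-1)} = (p-2)!$. When $n = p+1$, the unique type is $[1, p]$, giving $\frac{(p+1)!}{p(p-1)} = (p+1)(p-2)!$. When $n = p+2$ with $p$ odd, the unique type is $[1^2, p]$ (the type $[2, p]$ is odd, hence not in $A_n$), giving $\frac{(p+2)!}{2 \cdot p \cdot (p-1)} = \frac{(p+2)(p+1)(p-2)!}{2}$. For $n = 4$ (so $p = 2$, $n = p+2$), the relevant elements of order $2$ have type $[2^2]$; then $\mu_{[2^2]}[A_4] = \frac{4!}{2^2 \cdot 2! \cdot \phi(2)} = 3$, matching the last row.

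The only genuinely delicate point is verifying that in the cases $n = p+1$ and $n = p+2$ no permutation of $A_n$ has order a proper multiple of $p$ (so that $p$ is truly isolated in $\mathcal{O}_0(A_n)$) and that no type other than the one listed has order exactly $p$; this is a short case analysis on cycle types but must be done carefully, in particular excluding parity-forbidden types such as $[2,p]$ when passing from $S_n$ to $A_n$. Everything else is a direct application of the counting formulas \eqref{count}, \eqref{countilde} and the already-established Lemmas \ref{isolated}, \ref{component-quotient}.
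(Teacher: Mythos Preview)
Your proposal is correct and follows essentially the same approach as the paper: show $p$ is isolated in $\mathcal{O}_0(A_n)$ by excluding elements of order $kp$ with $k\geq 2$, invoke Lemma~\ref{isolated} to transfer isolation to $\widetilde{P}_0(A_n)$ and $P_0(\mathcal{T}(A_n))$, and then count the isolated classes via \eqref{count} and \eqref{countilde}. One small slip: when $n=p+2$ your inequality ``$2+p>n$'' is actually an equality, so the type $[2,p]$ (or $[4]$ when $p=2$) must be excluded on parity grounds rather than on support size---but you flag exactly this in your final paragraph, so the argument closes correctly.
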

\begin{proof} (i)  By the assumptions on $n$, we get $p\mid \frac{n!}{2}$ so that $A_n$ admits elements of order $p$, but there exists no element  in $A_n$ with order $kp$ for $k\geq 2$. This says that $p$ is isolated in $\mathcal{O}_0(A_n).$ Then, by Lemma \ref{isolated},  $[\psi]$
is isolated in $\widetilde{P}_0(A_n)$ and $T_{\psi}$ is isolated in $P_0(\mathcal{T}(A_n)),$ for all $[\psi]\in[A_n]$ with $o(\psi)=p.$

(ii)-(iv) We consider the elements of order $p$ in $A_n$.
If $n=p$, they are those of type $[p];$ if $n=p+1$,  those of type $[1, p];$  if $n=p+2$,  those of type $[1^2 ,p]$ if $p$ is odd and those of type $[2^2]$ if $p=2$. So the counting follows from \eqref{term} through \eqref{count} and \eqref{countilde}.

\end{proof}
%------------------------------------------------------------------------------------%
\begin{thm}\label{thm:11} For $3\leq n \leq 7,$ the values of $c_0(A_n)=\widetilde {c}_0(A_n)$, $c_0(\mathcal{T}(A_n))$ and $c_0(\mathcal{O}(A_n))$ are given by the following table.
\begin{table}[ht]
\caption{$c_0(A_n)=\widetilde {c}_0(A_n)$ and $c_0(\mathcal{T}(A_n))$, for $3\leq n \leq 7$.}\label{eqtable4}
\begin{center}
\begin{tabular}{|c|c|c|c|c|c|}
\hline
$n$  & $3$ & $4$ & $5$ & $6$ & $7$\\
\hline
$c_0(A_n)=\widetilde {c}_0(A_n)$ & $1$ & $7$ & $31$ & $121$ & $421$\\
\hline
$c_0(\mathcal{T}(A_n))$ & $1$ & $2$ & $3$ & $4$& $4$\\
\hline
$c_0(\mathcal{O}(A_n))$ & $1$ &$ 2$ &$ 3$ & $3 $&$ 3$\\

\hline
\end{tabular}
\end{center}
\end{table}
\end{thm}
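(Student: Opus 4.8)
The plan is to compute, for each $n\in\{3,4,5,6,7\}$ separately, the quantities $c_0(\mathcal{O}(A_n))$, $c_0(\mathcal{T}(A_n))$ and $\widetilde c_0(A_n)=c_0(A_n)$, proceeding from the coarsest graph to the finest so that the already-computed data feed into the next step. The case $n=3$ is already disposed of in the text ($[A_3]_0$ is a single vertex). For $n\in\{4,5,6,7\}$ I would first write down the full list of nontrivial types in $\mathcal{T}_0(A_n)$ together with $o(T)$ for each (these lists are short: e.g.\ for $A_5$ the types are $[1^2,3]$, $[1,2^2]$, $[5]$; for $A_7$ one gets $[1^4,3]$, $[1^3,2^2]$, $[1,3^2]$, $[1^2,5]$, $[1,2^2,3]$, $[7]$, $[1,3,2^2]$ etc.). From this list the order graph $\mathcal{O}_0(A_n)$ is drawn by hand using the divisibility rule, and $c_0(\mathcal{O}(A_n))$ is read off. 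For the primes $p$ with $n\in\{p,p+1,p+2\}$, Lemma~\ref{p-isolated}(i) already tells us those primes are isolated vertices of $\mathcal{O}_0(A_n)$, which pins down most of the order-graph components; the remaining (composite) orders must be checked directly.

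Next I would determine $P_0(\mathcal{T}(A_n))$ explicitly. Using the definition of proper power of a partition and the criterion ``$T^a$ proper power of $T$ iff $\gcd(a,o(T))\neq 1,o(T)$'', one checks for each pair of types whether one is a proper power of the other; again the work is finite and small. This gives $c_0(\mathcal{T}(A_n))$ directly as the number of components, and simultaneously identifies, for the Procedure~\ref{procedureA_n}, a system of representative types $T_1,\dots,T_{c_0(\mathcal{T}(A_n))}$ lying in distinct components of $P_0(\mathcal{T}(A_n))$. The consistency of this count with the chain of inequalities \eqref{connected}, $c_0(\mathcal{O}(A_n))\le c_0(\mathcal{T}(A_n))\le \widetilde c_0(A_n)$, serves as a running sanity check.

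Finally I would run Procedure~\ref{procedureA_n} to get $\widetilde c_0(A_n)=c_0(A_n)$. For each representative type $T_j$, I pick a convenient $[\psi]$ with $T_\psi=T_j$, identify the component $C_j$ of $\widetilde P_0(A_n)$ containing $[\psi]$, compute the multiplicity $k_{C_j}(T_j)$ (the number of classes $[\varphi]\in V_{C_j}$ of type $T_j$), and then $\widetilde c_0(A_n)_{T_j}=\mu_{T_j}[A_n]/k_{C_j}(T_j)$, where $\mu_{T_j}[A_n]=\mu_{T_j}(A_n)/\phi(o(T_j))$ by \eqref{countilde} and \eqref{count}. Summing over $j$ gives $\widetilde c_0(A_n)$. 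For the isolated types of prime order $p$ (namely $n\in\{p,p+1,p+2\}$), Lemma~\ref{p-isolated}(ii) already supplies $k_{C_j}(T_j)$ and hence the contribution $c_p$ directly from Table~\ref{eqtable3}, so only the ``main'' types — those whose classes lie in the large connected piece — require genuine analysis.

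The main obstacle is the last step: for the non-isolated types, one must understand how the power-graph edges glue the various type-classes together inside a single big component and correctly count $k_{C_j}(T_j)$, i.e.\ how many cyclic-subgroup classes of a given type sit in the main component. Concretely, for small $n$ one must verify that essentially all nontrivial classes whose order is not one of the ``special'' primes fall into one component — this requires exhibiting enough power relations (e.g.\ a permutation of type $[1^2,2^2,3]$ in $A_7$ powers to one of type $[1^2,3^2]$... adjusted, and to one of type $[1^3,2^2]$, linking those types) and then counting, within that component, the number of distinct cyclic subgroups of each relevant type, which is where the arithmetic with $\mu_T(A_n)$ and $\phi(o(T))$ has to be done carefully. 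The numerical outputs $7,31,121,421$ then follow by addition, and the table is assembled.
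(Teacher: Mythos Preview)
Your outline for computing $c_0(\mathcal{O}(A_n))$ and $c_0(\mathcal{T}(A_n))$ is fine and matches the paper's approach. The gap is in the last step, where your guiding intuition is wrong for these small degrees. You write that one must ``verify that essentially all nontrivial classes whose order is not one of the `special' primes fall into one component'' and speak of ``the large connected piece''. For $4\le n\le 7$ there is no such large piece. In $\widetilde P_0(A_5)$ \emph{every} vertex is isolated, including those of type $[1,2^2]$ (order $2$). In $\widetilde P_0(A_6)$ the vertices of types $[1^3,3]$ and $[3^2]$ (order $3$, not a ``special'' prime here) are isolated, and the vertices of type $[1^2,2^2]$ lie in $45$ separate components, each a path of length $2$ through two vertices of type $[2,4]$. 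In $\widetilde P_0(A_7)$ the vertices of type $[1,3^2]$ are isolated, and the vertices of type $[1^4,3]$ lie in $35$ distinct components. In each case the crucial fact is $k_{C_j}(T_j)=1$, so the contribution from that type-component to $\widetilde c_0(A_n)$ is $\mu_{T_j}[A_n]$, not $1$. Your picture would give, for instance, $\widetilde c_0(A_6)=36+1$ rather than $36+20+20+45=121$.

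A smaller point: your illustrative types for $A_7$ are not partitions of $7$ (both $[1^2,2^2,3]$ and $[1^2,3^2]$ sum to more than $7$); the relevant type is $[2^2,3]$, whose proper powers are $[1^4,3]$ and $[1^3,2^2]$, and this does \emph{not} link to $[1,3^2]$. That matters, because $[1,3^2]$ being unreachable is precisely why it contributes $140$ isolated vertices. The paper's proof proceeds by explicitly describing one component $C_j$ for each non-isolated representative type, checking that $k_{C_j}(T_j)=1$, and summing the resulting $\mu_{T_j}[A_n]$.
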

\begin{proof} Let $G=A_n$, for $3\leq n \leq 7$, acting on $N=\{1,\dots,n\}$. We compute $\widetilde{c}_0(A_n)$ separately for each degree. Since $\widetilde{P}_0(A_4)$ has only three vertices of type $[2^2]$ and four of type $[1, 3]$ and all of them are isolated, we have $\widetilde{c}_0(A_4)=7$, $c_0(\mathcal{T}(A_4))=c_0(\mathcal{O}(A_4))=2$.

In $\widetilde{P}_0(A_5)$, by Lemma \ref{p-isolated}, all the vertices of type $[1^2, 3]$ and $[5]$ are isolated.  Moreover it is immediately  checked  that also the vertices of type $[1, 2^2]$ are isolated. In other words the graph $\widetilde{P}_0(A_5)$ is totally disconnected.
By \eqref{term} and \eqref{count}, applying the procedure \ref{procedureA_n},
 we then  get $$\widetilde{c}_0(A_5)=\mu_{[1, 2^2]}[A_5]+\mu_{[1^2, 3]}[A_5]+\mu_{[5]}[A_5]=31,$$
 while $c_0(\mathcal{T}(A_5))=3$. Since $O_0(A_5)=\{2,3,5\}$ we also have $c_0(\mathcal{O}(A_5))=3.$

In $\widetilde{P}_0(A_6)$, by Lemma~\ref{p-isolated}, all the vertices of type $T_1=[1, 5]$ are contained in $\widetilde{c}_0(A_6)_{T_1}=36$ components which are isolated vertices. Also, having in mind~\cite[Corollary 5.5]{bis}, it is clear that all the vertices of types $T_2=[1^3, 3]$ and $T_3=[3^2]$ are isolated. Thus  by \eqref{term} and \eqref{count}, we get $\widetilde{c}_0(A_6)_{T_2}=\widetilde{c}_0(A_6)_{T_3}=20. $ Let $T_4=[1^2,2^2]$ and consider the component $C_4$ containing $\psi=(1~2)(3~4)$. Since $T_4$ admits no proper power, the edges incident to $[\psi]$ are given only by $\{[\psi],[\varphi]\}$ for those $\varphi\in A_6$ such that $\varphi^2=\psi.$  In particular $\varphi=(a~b)(c~d~e~f)$, where $\{a, b, c, d, e, f\}=N$  and  $(c~e)(d~f)=(1~2)(3~4)$. It follows that $(a~b)=(5~6)$, while either $(c~d~e~f)=(1~3~2~4)$ or $(c~d~e~f)=(3~1~4~2)$. On the other hand, by~\cite[Lemma 5.2]{bis}, the type   $T_{\varphi}=[2,4]$ is not a proper power, because its terms are distinct. Thus $C_4$ is the path:
$$[(1~3~2~4)(5~6)], [(1~2)(3~4)], [(3~1~4~2)(5~6)]$$
and $k_{C_4}(T_4)=1.$ So, by \eqref{term},  $\widetilde{c}_0(A_6)_{T_4}=\mu_{[1^2, 2^2]}[A_6]=45. $
Since all the types admissible for $A_6$ have appeared, the application of the procedure \ref{procedureA_n} gives $\widetilde{c}_0(A_6)=121$, while $c_0(\mathcal{T}(A_6))=4$. Moreover $c_0(\mathcal{O}(A_6))=3$ and the components of $\mathcal{O}_0(A_6)$ have vertex sets $\{3\}$, $\{5\}$ and $\{2,4\}.$

In $\widetilde{P}_0(A_7)$, again by Lemma~\ref{p-isolated}, all the vertices of type $T_1=[1^2, 5]$ and $T_2=[7]$ are isolated  and $\widetilde{c}_0(A_7)_{T_1}=126, \widetilde{c}_0(A_7)_{T_2}=120.$ Moreover, by ~\cite[Corollary 5.5]{bis}, it is clear that all the vertices of types $T_3=[1, 3^2]$  are isolated and, by \eqref{term} and \eqref{count},
we get $\widetilde{c}_0(A_7)_{T_3}=140.$ Let $C_i$ for $i\in \{1,2,3\}$ be a component admissible for $T_i$. Consider now the type $T_4=[1^4, 3]$
and the component $C_4$ containing $\psi=(1~2~3)$. Since $T_4$ admits no proper power, the edges incident to $[\psi]$ are given only
by $\{[\psi],[\varphi]\}$ for those $\varphi\in A_7$ such that $T_{\varphi}=[2^2,3]$ and $\varphi^2=\psi.$ Then  $\varphi=(a~b)(c~d)(e~f~g)$,
where $\{a, b, c, d, e, f,g\}=N$  and  $(e~g~f)=(1~2~3)$, $\{a, b, c, d\}=\{4, 5, 6, 7\} .$ It follows that there are three choices for $[\varphi]$.
Moreover each of those $[\varphi]$ is adjacent to exactly one vertex of type $[1^3, 2^2]$, which in turn  is adjacent to three vertices of
type $[1, 2, 4]$ and these elements are not adjacent to any other vertex.
%(see Figure 1 below).
%\begin{figure}[ht]
%\centering
%\includegraphics[scale=0.70]{A7.eps}
%\caption{ The connected component of $\widetilde{P}_0(A_7)$ which contain $[(a~b~c)]$ }
%\label{fig: fig. 2. }
%\end{figure}
In particular we have $k_{C_4}(T_4)=1$ and thus, by \eqref{term}, $\widetilde{c}_0(A_7)_{T_4}=\mu_{[1^4, 3]}[A_7]=35$. Since $\bigcup_{i=1}^4\mathcal{T}(C_i)=\mathcal{T}(A_7)$, the procedure  \ref{procedureA_n}  closes giving $\widetilde{c}_0(A_7)=421$ and $c_0(\mathcal{T}(A_7))=4$. Moreover $c_0(\mathcal{O}(A_7))=3$ and the three components of $\mathcal{O}_0(A_7)$ have vertex sets $\{2, 3, 4, 6
\}$, $\{5\}$ and $\{7\}.$
\end{proof}
%------------------------------------------------------------------------------------%
 %------------------------------------------------------------------------------------%
 \vskip 0.4 true cm

\section{The degrees $8$ and $9$}

\vskip 0.4 true cm
%------------------------------------------------------------------------------------%
In this section, we determine $c_0(A_n)=\widetilde {c}_0(A_n)$, $c_0(\mathcal{T}(A_n))$ and $c_0(\mathcal{ O}(A_n))$ for $n\in\{8,9\}.$ We also obtain some results to treat all the cases with $n\geq8.$ For every $\psi\in A_n$, we denote by $M_{\psi}=\{i\in N :\psi(i)\neq i\}$ the support of $\psi.$
\begin{defn}\label{def1} For $n\geq 8$, denote by $\widetilde{\Omega}_n$ the component of $\widetilde{P}_0(A_n)$ containing the vertex $[(12)(34)]$.
\end{defn}

\begin{lem}\label{lem:12}
Let $n\geq 8$.
\begin{itemize} \item[(i)] All the vertices of $\widetilde{P}_0(A_n)$ of  type $[1^{n-4}, 2^2]$ belong to $\widetilde{\Omega}_n$.
\item[(ii)] $\mathcal{T}(\widetilde{\Omega}_n) \supseteq \{[1^{n-4}, 2^2],[1^{n-3},3], [1^{n-7},2^2,3] \}$.
\item[(iii)] For every $T\in \mathcal{T}(\widetilde{\Omega}_n)$,
$\widetilde{\Omega}_n$ contains all the vertices of $\widetilde{P}_0(A_n)$ of type $T$ and $\mathcal{T}(\widetilde{\Omega}_n)=V_{C(T)}$.
\end{itemize}

\end{lem}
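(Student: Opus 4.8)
The plan is to establish (i), (ii), (iii) in that order, leveraging the fact that $T_0:=[1^{n-4},2^2]$ admits no proper power (its terms are $1$ and $2$, and $[1^{n-4},2^2]^2=[1^n]$ is trivial), so that every proper edge incident to a vertex of type $T_0$ in $\widetilde{P}_0(A_n)$ must come from a vertex whose \emph{square} has type $T_0$; this is the same phenomenon exploited for $A_6$ and $A_7$ in the proof of Theorem \ref{thm:11}. For (i), I would fix $[\psi]$ with $T_\psi=T_0$ and show it is connected in $\widetilde{P}_0(A_n)$ to $[(12)(34)]$. By symmetry (conjugating in $S_n$, which by fusion-control preserves the relevant structure, or more simply because $A_n$ acts transitively on the set of $T_0$-type elements for $n\ge 5$) it suffices to exhibit \emph{one} proper edge at $[\psi]$ leading toward a ``hub'' type like $[1^{n-3},3]$. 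Concretely, if $\psi=(a\,b)(c\,d)$ then $\varphi=(a\,b)(c\,d)(e)\cdots$ can be extended: take $\varphi$ a $3$-cycle times a $4$-cycle or similar whose square realises $T_0$ — for $n\ge 8$ there is enough room on the fixed points of $\psi$ to build an element of type $[1^{n-7},2^2,3]$ or $[1^{n-6},2,4]$ whose square has type $T_0$. Then all $T_0$-vertices lie in a common component, and since $[(12)(34)]$ is one of them, that component is $\widetilde{\Omega}_n$.

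For (ii): once (i) is known, I want to exhibit explicit paths in $\widetilde{P}_0(A_n)$ from $[(12)(34)]$ to a vertex of type $[1^{n-3},3]$ and to a vertex of type $[1^{n-7},2^2,3]$. The element $\varphi=(1\,5\,2\,6)(3\,4)$ (for $n\ge 6$, with $7,\dots,n$ fixed) has $T_\varphi=[1^{n-6},2,4]$ and $\varphi^2=(1\,2)(3\,4)\cdot(\text{trivial on }5,6)$... wait, $\varphi^2=(1\,2)(5\,6)$, so one adjusts the cycle structure so that $\varphi^2=(1\,2)(3\,4)$; e.g. $\varphi=(1\,3\,2\,4)(5\,6)$ squares to $(1\,2)(3\,4)$ and has type $[1^{n-6},2,4]$. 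From a $[1^{n-3},3]$-vertex, say $[(1\,2\,3)]$, the element $(4\,5)(6\,7)(1\,2\,3)$ of type $[1^{n-7},2^2,3]$ squares to $(1\,3\,2)$, giving the edge to $[1^{n-7},2^2,3]$; and one needs a bridge connecting the $T_0$-component to the $[1^{n-3},3]$-vertex. Here I would use an intermediate element of type $[1^{n-7},2^2,3]$ or $[1^{n-5},2^2]$... The cleanest bridge: an element $\sigma$ of type $[1^{n-6},6]$ has $\sigma^2$ of type $[1^{n-6},3^2]$... that doesn't touch $T_0$. Better: $\sigma=(1\,2)(3\,4)(5\,6\,7)$ has type $[1^{n-7},2^2,3]$ and $\sigma^3=(1\,2)(3\,4)$ of type $T_0$, while $\sigma^2=(5\,7\,6)$ of type $[1^{n-3},3]$ — so $[\sigma]$ is adjacent to \emph{both} $[(1\,2)(3\,4)]$ and $[(5\,7\,6)]$ simultaneously. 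That single vertex $[\sigma]$ witnesses all three types in $\mathcal{T}(\widetilde{\Omega}_n)$ at once, which is the crux of (ii).

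For (iii): this is where the paper's general machinery does the work. Since $\widetilde{t}:\widetilde{P}_0(A_n)\to P_0(\mathcal{T}(A_n))$ is a complete $2$-homomorphism and, in the fusion-controlled case, an orbit homomorphism (cited from \cite{bis}), an orbit complete homomorphism is component equitable and pseudo-covering by \cite[Propositions 5.9, 6.9]{bub}; hence Lemma \ref{C(T)}(i) applies, giving $\mathcal{T}(C)=V_{C(T)}$ for any component $C$ admissible for $T$. Applying this with $C=\widetilde{\Omega}_n$ and any $T\in\mathcal{T}(\widetilde{\Omega}_n)$ yields $\mathcal{T}(\widetilde{\Omega}_n)=V_{C(T)}$. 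That $\widetilde{\Omega}_n$ contains \emph{all} vertices of type $T$ then follows because, again by pseudo-coveringness (component equitability), the fibre over $T$ meets only components in $\widetilde{\mathcal{C}}_0(A_n)_T$ with equal multiplicity, but the argument of (i)–(ii) combined with the orbit structure forces a single such component — or, more directly, one invokes that $A_n$ (acting by conjugation, which realises all the graph automorphisms from fusion-control) is transitive on vertices of a fixed type, so they cannot be split between two components. \textbf{The main obstacle} I anticipate is (i): one must check, uniformly in $n\ge 8$, that for \emph{every} $T_0$-type vertex there genuinely is a proper edge of $\widetilde{P}_0(A_n)$ at it — equivalently that some element of $A_n$ has square of type $T_0$ and order $>2$ — and then chain such edges to a common vertex; the $n=8$ boundary case (where room on fixed points is tightest, only $4$ fixed points) needs explicit verification, exactly as the small degrees were handled by hand in Theorem \ref{thm:11}.
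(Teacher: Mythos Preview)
Your bridge element $\sigma=(1\,2)(3\,4)(5\,6\,7)$ is exactly the right object, and your treatment of (ii) and the first half of (iii) (via Lemma~\ref{C(T)} and the pseudo-covering machinery) matches the paper. The gap is in (i), and it propagates into your ``more direct'' alternative for (iii).

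The claim ``by symmetry \dots\ it suffices to exhibit one proper edge at $[\psi]$ leading toward a hub type like $[1^{n-3},3]$'' is not valid. Transitivity of the conjugation action on vertices of a fixed type tells you only that the components containing such vertices are pairwise isomorphic; it does \emph{not} rule out several of them. Likewise your closing sentence in (iii), ``$A_n$ is transitive on vertices of a fixed type, so they cannot be split between two components'', is false as stated --- the components of $\widetilde{P}_0(A_9)$ admissible for $T=[3^3]$ (there are $1120$ of them, see the proof of Theorem~\ref{thm:15}) are a counterexample. What you actually need for (i) is a genuine path between an \emph{arbitrary pair} of $T_0$-vertices. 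Your $\sigma$-bridge connects $[(a\,b)(c\,d)]$ to any $3$-cycle on $N\setminus\{a,b,c,d\}$, but to chain two $T_0$-vertices through a common $3$-cycle you would need three points outside \emph{both} supports, which for $n=8$ forces the supports to share at least three letters --- a constraint you cannot assume.

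The paper handles (i) by a short direct case analysis: given $\pi_1=(a\,b)(c\,d)$ and $\pi_2$ of the same type, it first treats the case where $\pi_1,\pi_2$ share a transposition (writing down the explicit path $[\pi_1],\,[(a\,b)(c\,d)(x\,y\,z)],\,[(x\,y\,z)],\,[(a\,b)(c\,f)(x\,y\,z)],\,[\pi_2]$ with $x,y,z$ chosen from the $\ge 3$ leftover points, which is where $n\ge 8$ is used), and then reduces every other configuration of $M_{\pi_1}\cap M_{\pi_2}$ to that case via an intermediate $\pi$. Part (ii) is then read off from the types appearing along those paths, and (iii) is deduced from (i) exactly as in the first half of your argument, citing \cite[Corollary 6.7(iii)]{bis} rather than the (incorrect) transitivity shortcut.
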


\begin{proof}
(i) Let  $[\pi_1]$ and $[\pi_2]$ be distinct vertices in $[A_n]_0$ of type $[1^{n-4}, 2^2]$. Then $\pi_1, \pi_2\in A_n$ are distinct and share at most one transposition. Let $\pi_1=(a~b)(c~d)$ for suitable distinct $a, b, c, d\in N.$  We  analyze the two possibilities:
\begin{itemize}
\item[(1)] $\pi_1$ and $\pi_2$ share a transposition;
\item[(2)]$\pi_1$ and $\pi_2$ share no transposition.
\end{itemize}
(1) In this case we have $\pi_2=(a~b)(e~f)$, for suitable  distinct $ e, f\in N$ and $\{c,d\}\neq \{e,f\}$. If $|\{c,d\}\cap \{e,f\}|=1,$ we may assume that $e=c$, that is, $\pi_2=(a~b)(c~f)$. Since $n\geq 8$, there exist $x,y,z \in N\setminus \{a,b,c,e,f\}$
and we have  the following path  between $[\pi_1]$ and $[\pi_2]$:\\
$[\pi_1], [(a~b)(c~d)(x~y~z)], [(x~y~z)], [(a~b)(c~f)(x~y~z)], [\pi_2].$

If $|\{c,d\}\cap \{e,f\}|=0,$ let $\pi=(a~b)(c~e)$. By the previous case, there is a path between $[\pi_1]$ and $[\pi]$ as well as a path between $[\pi_2]$ and $[\pi]$ and so also a path
 between $[\pi_1]$ and $[\pi_2]$.

(2) We distinguish the three possible subcases:
\begin{itemize}
\item[(a)] $|M_{\pi_1}\cap M_{\pi_2}|=2$. Then $\pi_2=(a~e)(c~f)$, for suitable $e,f\in N,$ such that $\{a,e,c,f\}\cap \{a,b,c,d\} =\{a,c\}$.
Let $\pi=(a~b)(c~f)$. Then, by  1), there exists a path between $[\pi_1]$ and $[\pi]$ and a path between $[\pi_2]$ and $[\pi]$. So there exists a path also between $[\pi_1]$ and $[\pi_2]$.
\item[(b)] $|M_{\pi_1}\cap M_{\pi_2}|=1$. Then $\pi_2=(a~e)(f~g)$, for suitable $e ,f,g\in N$ such that $\{a,e,f,g\}\cap \{a,b,c,d\} =\{a\}$.
Now consider $\pi=(a~b)(f~g)$ and argue as in (a).
\item[(c)] $|M_{\pi_1}\cap M_{\pi_2}|=0$. Then $\pi_2=(e~f)(g~h)$ for suitable $e ,f,g,h\in N$ such that $\{e,f,g,h\}\cap \{a,b,c,d\} =\varnothing$.
Now consider $\pi=(a~b)(e~f)$ and argue as in (a).
\end{itemize}
This shows that all the vertices of $\widetilde{P}_0(A_n)$  of type $[1^{n-4}, 2^2]$ lie  in the same component $\widetilde{\Omega}_n.$

(ii)  Collecting the types met in the paths used for i), we immediately get  $$\mathcal{T}(\widetilde{\Omega}_n)\supseteq \{[1^{n-4},2^2], [1^{n-3},3], [1^{n-7},2^2,3]\}.$$

(iii)  Let  $T\in \mathcal{T}(\widetilde{\Omega}_n)$. The fact that $\mathcal{T}(\widetilde{\Omega}_n)=V_{C(T)}$ is an application of Lemma \ref{C(T)}.  The fact that $\widetilde{\Omega}_n$ contains all the vertices in $[A_n]_0$ of type $T$  is instead a consequence of (i) and of \cite[Corollary 6.7(iii)]{bis}.
\end{proof}

\begin{cor} \label{complete} Let $n\geq 8$ and $\Omega_n$ be the unique component of $P_0(A_n)$ such that $\pi (\Omega_n)=\widetilde{\Omega}_n.$ Then neither one of the components $\Omega_n,\  \widetilde{\Omega}_n,\  \widetilde{t}(\widetilde{\Omega}_n)$ of the graphs $P_0(A_n)$, $\widetilde{P}_0(A_n)$, $P_0(\mathcal{T}(A_n))$
respectively, nor  the connected subgraph $\widetilde{o}(\widetilde{\Omega}_n)$ of $\mathcal{O}_0(A_n)$ is a complete graph. %In particular none of them is an isolated vertex.
\end{cor}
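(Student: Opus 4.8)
The plan is to exhibit, inside each of the four graphs, a small explicit configuration which fails to be complete, and then to transport this failure along the homomorphisms relating the graphs. The cleanest place to work is the proper order graph $\mathcal{O}_0(A_n)$, since being complete there is the most restrictive condition and the easiest to violate. The key observation is that for $n\geq 8$ the component $\widetilde{\Omega}_n$ contains, by Lemma~\ref{lem:12}(ii), vertices of types $[1^{n-4},2^2]$, $[1^{n-3},3]$ and $[1^{n-7},2^2,3]$, whose orders are $2$, $3$ and $6$ respectively. Hence $\{2,3,6\}\subseteq V_{\widetilde{o}(\widetilde{\Omega}_n)}$. But in $\mathcal{O}_0(A_n)$ there is no proper edge between $2$ and $3$, since neither divides the other; therefore $\widetilde{o}(\widetilde{\Omega}_n)$, containing the two non-adjacent vertices $2$ and $3$, is not a complete graph.

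Next I would push this down—more precisely, pull it back—through the various homomorphisms. Recall from the excerpt that $\widetilde{o}$ is a complete $2$-homomorphism from $\widetilde{P}_0(A_n)$ onto $\mathcal{O}_0(A_n)$, that $\widetilde{t}$ is a complete $2$-homomorphism from $\widetilde{P}_0(A_n)$ onto $P_0(\mathcal{T}(A_n))$, and that $\pi$ is pseudo-covering from $P_0(A_n)$ onto $\widetilde{P}_0(A_n)$. By Proposition~\ref{hom-con}(i), the image of a complete subgraph under a homomorphism is complete; equivalently, if the image is not complete then neither is the source. Since $\widetilde{o}(\widetilde{\Omega}_n)$ is not complete, $\widetilde{\Omega}_n$ is not complete; since $\widetilde{o}$ factors through $\widetilde{t}$ (the order graph is a quotient of the power type graph by \cite[Proposition 5.6]{bis}), the image $\widetilde{o}(\widetilde{t}(\widetilde{\Omega}_n))=\widetilde{o}(\widetilde{\Omega}_n)$ is not complete, so $\widetilde{t}(\widetilde{\Omega}_n)$ is not complete either. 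Finally, applying Proposition~\ref{hom-con}(i) once more to the homomorphism $\pi$ restricted to $\Omega_n$, whose image is $\widetilde{\Omega}_n$, we conclude that $\Omega_n$ is not complete.

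Alternatively, and perhaps more transparently, one can argue directly in each graph by pointing at two vertices that are not adjacent: in $P_0(A_n)$ take $(1\,2)(3\,4)$ and $(1\,2\,3)$, both in $\Omega_n$ by Lemma~\ref{lem:12}(i)–(ii) together with \eqref{lato}; they are non-adjacent in $P_0(A_n)$ because neither is a power of the other (one has order $2$, the other order $3$, and $2\nmid 3$, $3\nmid 2$). The same pair of types $[1^{n-4},2^2]$ and $[1^{n-3},3]$ supplies non-adjacent vertices in $\widetilde{P}_0(A_n)$ and in $P_0(\mathcal{T}(A_n))$, and the orders $2$ and $3$ supply non-adjacent vertices in $\mathcal{O}_0(A_n)$. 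I do not expect any real obstacle here: the only point requiring care is to confirm that all four of these distinguished vertices or types genuinely lie in the relevant component, which is exactly the content of Lemma~\ref{lem:12}, already available. The main (minor) subtlety is just to make sure the case $n\geq 8$ hypothesis is used precisely where Lemma~\ref{lem:12}(ii) needs it—namely to guarantee the type $[1^{n-7},2^2,3]$ makes sense and, for the direct argument, that all of $a,b,c,d$ and the auxiliary points $x,y,z$ in the paths of Lemma~\ref{lem:12} can be chosen distinct.
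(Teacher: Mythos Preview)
Your proposal is correct and follows essentially the same approach as the paper: show first that $\widetilde{o}(\widetilde{\Omega}_n)$ is not complete because, by Lemma~\ref{lem:12}, it contains the non-adjacent vertices $2$ and $3$, and then apply Proposition~\ref{hom-con}(i) contrapositively along the chain of homomorphisms $\pi$, $\widetilde{t}$, $o_{\mathcal{T}}$ (with $\widetilde{o}=o_{\mathcal{T}}\circ\widetilde{t}$) to conclude the same for $\Omega_n$, $\widetilde{\Omega}_n$ and $\widetilde{t}(\widetilde{\Omega}_n)$. The only slip is notational: where you write $\widetilde{o}(\widetilde{t}(\widetilde{\Omega}_n))$ you mean $o_{\mathcal{T}}(\widetilde{t}(\widetilde{\Omega}_n))$, since $\widetilde{o}$ has domain $[A_n]_0$, not $\mathcal{T}_0(A_n)$; the paper makes this factorization explicit.
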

\begin{proof}  First of all note that  the existence of a unique component  $\Omega_n$ of  $P_0(A_n)$ such that $\pi(\Omega_n)=\widetilde{\Omega}_n$ is guaranteed by Lemma \ref{component-quotient}. Moreover, by Proposition \ref{hom-con}, we have that  $\widetilde{t}(\widetilde{\Omega}_n)$ is a component of $\widetilde{P}_0(A_n)$ with  $V_{\widetilde{t}(\widetilde{\Omega}_n)}=\mathcal{T}(\widetilde{\Omega}_n)$. On the other hand, by \cite[Proposition 5.6 ]{bis}, the  map
$o_{\mathcal{T}}:\mathcal{T}_0(A_n)\rightarrow O_0(A_n)$ defined by $o_{\mathcal{T}}(T)=o(T)$ for all $T\in \mathcal{T}_0(A_n)$, defines a complete $2$-homomorphism $o_{\mathcal{T}}:P_0(\mathcal{T}(A_n))\rightarrow \mathcal{O}_0(A_n)$ such that $o_{\mathcal{T}}\circ \widetilde{t}=\widetilde{o}$, which gives
$\widetilde{o}(\widetilde{\Omega}_n)=o_{\mathcal{T}}(\widetilde{t}(\widetilde{\Omega}_n)).$
It follows that we can interpret the sequence  of graphs $$\Omega_n,\  \widetilde{\Omega}_n,\  \widetilde{t}(\widetilde{\Omega}_n),\ \widetilde{o}(\widetilde{\Omega}_n)$$ as \begin{equation}\label{new}\Omega_n,\  \pi(\Omega_n),\  \widetilde{t}( \pi(\Omega_n)),\ o_{\mathcal{T}}(\widetilde{t}( \pi(\Omega_n))).\end{equation}
It is immediate to check that $\widetilde{o}(\widetilde{\Omega}_n)$ is not a complete graph because, by Lemma \ref{lem:12}, it admits as vertices the integer $2$ and $3$ and no edge exists between them in $\mathcal{O}_0(A_n)$. Then to deduce that no graph in the sequence \eqref{new} is complete, we start from the bottom and apply three times Proposition \ref{hom-con}\,(i). %For the last affirmation note that a graph reduced to a single vertex is complete.
\end{proof}
Note that, in principle, $\widetilde{o}(\widetilde{\Omega}_n)$ is not a component of $\mathcal{O}_0(A_n)$ because $\widetilde{o}$ is not, in general,  pseudo-covering. For instance $\widetilde{o}(\widetilde{\Omega}_9)$ is not a component of $\mathcal{O}_0(A_9)$ because $3$ is a vertex of $\widetilde{o}(\widetilde{\Omega}_9)$ but $9$, adjacent to $3$ in $\mathcal{O}_0(A_9)$, is not a vertex of  $\widetilde{o}(\widetilde{\Omega}_9)$. That  fact is easily understood looking inside the proof of Theorem \ref{thm:15}.
Anyway we will see, along the proof of Theorem A, that $\widetilde{o}(\widetilde{\Omega}_n)$ is indeed a component at least  for $n\geq 11$.

 %------------------------------------------------------------------------------------%
\begin{lem}\label{lem:13}
Let $n\in\{8,9\}$.
\begin{itemize}
 \item[(i)] The vertices of  $\widetilde{P}_0(A_n)$ of type $[1^{n-8}, 2^4]$ belong to the same component  $\widetilde{\Lambda}_n$ of $\widetilde{P}_0(A_n)$.
\item[(ii)] $\mathcal{T}(\widetilde{\Lambda}_n)= \{[1^{n-8},2^4], [1^{n-8},2,6], [1^{n-6},3^2],[1^{n-8},4^2]\}$.
\item[(iii)] $\widetilde{\Lambda}_n\neq \widetilde{\Omega}_n$.
\end{itemize}
\end{lem}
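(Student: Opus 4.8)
The plan is to prove Lemma~\ref{lem:13} for $n\in\{8,9\}$ by three essentially computational arguments, using the same machinery already deployed in the proof of Lemma~\ref{lem:12}, namely explicit paths in $\widetilde{P}_0(A_n)$, the edge criterion ``$[\varphi]$ adjacent to $[\psi]$ iff one of $T_\varphi,T_\psi$ is a proper power of the other'', Lemma~\ref{C(T)} and \cite[Corollary 6.7(iii)]{bis}. For part~(i), I would take two distinct vertices $[\pi_1],[\pi_2]$ of type $[1^{n-8},2^4]$ and exhibit a path between them. The key observation is that any such $\pi$ is a square: writing $\pi=(a_1b_1)(a_2b_2)(a_3b_3)(a_4b_4)$, we can pair up the four transpositions and realise $\pi=\varphi^2$ with $\varphi$ of type $[1^{n-8},4^2]$ (so $[\pi]$ is adjacent to a type-$[1^{n-8},4^2]$ vertex), and we can also pass through type-$[1^{n-8},2,6]$ and type-$[1^{n-6},3^2]$ vertices since $[1^{n-8},2^4]$ and $[1^{n-6},3^2]$ are both proper powers of $[1^{n-8},2,6]$. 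Concretely, I would mimic the case analysis of Lemma~\ref{lem:12}(i): first connect two such permutations sharing three transpositions via a common neighbour, then reduce the general case to this one by a chain of intermediaries, always having enough points available since $n\ge 8$. Collecting the types appearing on these paths gives the inclusion $\mathcal{T}(\widetilde{\Lambda}_n)\supseteq\{[1^{n-8},2^4],[1^{n-8},2,6],[1^{n-6},3^2],[1^{n-8},4^2]\}$, which is the nontrivial half of~(ii).

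For the reverse inclusion in part~(ii), I would argue that no vertex of a type outside this list can lie in $\widetilde{\Lambda}_n$. By Lemma~\ref{C(T)}(i) we have $\mathcal{T}(\widetilde{\Lambda}_n)=V_{C(T)}$ for the relevant component of $P_0(\mathcal{T}(A_n))$, so it suffices to determine the full connected component of $P_0(\mathcal{T}(A_n))$ containing $[1^{n-8},2^4]$. This is a finite check: enumerate the partitions of $n$ (for $n=8,9$) and their proper powers, and verify that $\{[1^{n-8},2^4],[1^{n-8},2,6],[1^{n-6},3^2],[1^{n-8},4^2]\}$ is closed under taking proper powers and proper ``roots'' (i.e.\ no partition of $n$ outside the set has one of these four as a proper power, and none of these four has a proper power outside the set). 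For instance $[1^{n-8},4^2]$ has only $[1^{n-8},2^4]$ as a proper power; $[1^{n-8},2,6]$ has proper powers $[1^{n-8},2^4]$ and $[1^{n-6},3^2]$ only; and $[1^{n-6},3^2]$, $[1^{n-8},2^4]$ admit no proper power (all terms coprime to the order, or order $2$). One must also check no partition of $n$ not in the list is a proper power of one in the list or has one in the list as a proper power — the candidate culprits would be types like $[1^{n-12},2,6^2]$ or $[1^{n-8},2^2,4]$, which for $n\le 9$ simply do not exist as partitions of $n$, so the list is closed.

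For part~(iii), I would simply note that $[1^{n-8},2^4]\in\mathcal{T}(\widetilde{\Lambda}_n)$ while, by Lemma~\ref{lem:12}(iii), $\widetilde{\Omega}_n$ contains all vertices of every type in $\mathcal{T}(\widetilde{\Omega}_n)$ and $\mathcal{T}(\widetilde{\Omega}_n)=V_{C([1^{n-4},2^2])}$; since $[1^{n-8},2^4]\notin\{[1^{n-8},2^4]$'s list is disjoint from $[1^{n-4},2^2]$'s list$\}$ — more precisely, the component of $P_0(\mathcal{T}(A_n))$ through $[1^{n-4},2^2]$ does not contain $[1^{n-8},2^4]$ (check that $[1^{n-4},2^2]$ is not a proper power nor has proper powers leading into the $\widetilde{\Lambda}_n$ list, and indeed $[1^{n-4},2^2]$ admits no proper power at all) — the two types lie in different components of $P_0(\mathcal{T}(A_n))$, hence $[1^{n-8},2^4]\notin\mathcal{T}(\widetilde{\Omega}_n)$, forcing $\widetilde{\Lambda}_n\neq\widetilde{\Omega}_n$.

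The main obstacle is part~(ii)'s reverse inclusion: one has to be genuinely careful to enumerate \emph{all} partitions of $8$ and of $9$ and check the power relation exhaustively, rather than overlooking a sporadic type (this is exactly the kind of slip the authors warn about in comparing with \cite{df}). The path-building in~(i) is routine once the pairing-of-transpositions idea is in hand; the subtlety there is only bookkeeping the available fixed points, which is guaranteed by $n\ge 8$ (e.g.\ for $n=8$ there are no spare points after a type-$[2^4]$ element, so some intermediate steps must be phrased to pass through type $[1^{n-8},2,6]$ or $[1^{n-6},3^2]$ directly rather than inserting an extra $3$-cycle on fresh points). I would therefore organise~(i) so that the path between two type-$[1^{n-8},2^4]$ vertices travels through the type-$[1^{n-8},4^2]$ and type-$[1^{n-8},2,6]$ neighbours, which exist regardless of whether $n=8$ or $n=9$.
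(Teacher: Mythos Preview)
Your plan is essentially the paper's own approach: build explicit paths between type-$[1^{n-8},2^4]$ vertices passing through types $[1^{n-8},2,6]$, $[1^{n-6},3^2]$, $[1^{n-8},4^2]$; then verify that this four-element set of types is closed under the edge relation of $P_0(\mathcal{T}(A_n))$; then separate $\widetilde{\Lambda}_n$ from $\widetilde{\Omega}_n$ by disjointness of their admissible types. One small slip: your proposed base case ``two permutations sharing three transpositions'' is vacuous for $n=8$, since two distinct elements of type $[2^4]$ in $S_8$ can share at most two transpositions (sharing three forces the fourth to coincide as well). The paper instead takes ``sharing exactly one transposition'' as the pivotal case, writes down the path $[\pi_1],[(a\,b)(c\,e\,g\,d\,f\,h)],[(c\,g\,f)(e\,d\,h)],[(c\,h\,g\,e\,f\,d)(a\,b)],[\pi_2]$, and reduces the zero- and two-shared-transposition cases to it via an intermediary; for $n=9$ it first reduces to a common fixed point (landing inside a copy of $A_8$) and handles the remaining case with one more explicit path. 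For~(iii) the paper argues in the opposite direction from you (noting $[1^{n-4},2^2]\in\mathcal{T}(\widetilde{\Omega}_n)\setminus\mathcal{T}(\widetilde{\Lambda}_n)$), but your argument is equally valid.
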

\begin{proof} Let first $n=8$. Let $[\pi_1]$ and $[\pi_2]$ be distinct elements in $[A_8]_0$ of type $[2^4]$, with $\pi_1=(a~b)(c~d)(e~f)(g~h)$, where $\{a, b, c, d, e, f, g, h\}=N.$
Since $\pi_1, \pi_2$ are distinct, they share at most two transpositions. We  analyze the three possibilities:
\begin{itemize}
\item[(1)] $\pi_1$ and $\pi_2$ share one transposition;
\item[(2)] $\pi_1$ and $\pi_2$ share two transposition;
\item[(3)]$\pi_1$ and $\pi_2$ share no transposition.
\end{itemize}
(1)  In this case, since the $2$-cycles in which $\pi_1$ splits commute and also the entries in each cycle commute,  we can assume that $\pi_2=(a~b)(c~e)(h~f)(g~d)$ and look to the  path
$$[\pi_1], [(a~b)(c~e~g~d~f~h)], [(c~g~f)(e~d~h)], [(c~h~g~e~f~d)(a~b)], [\pi_2].$$

\noindent (2) Here, for the same considerations as in (1), we reduce to\newline $\pi_2=(a~b)(c~d)(e~g)(f~h)$. Consider $\pi=(a~b)(c~e)(d~h)(f~g)$. Then by (1), there exists a path between $[\pi_1]$ and $[\pi]$ and a path between $[\pi_2]$ and $[\pi]$. Thus there exists also a path between $[\pi_1]$ and $[\pi_2]$.\\

\noindent (3)  Here we reduce to the two cases $\pi_2=(a~c)(b~d)(e~g)(f~h)$ or $\pi_2=(a~c)(b~g)(e~d)(f~h)$. In the first case let $\pi=(a~b)(c~d)(e~g)(f~h)$. Then by (2), there exists a path between $[\pi_1]$ and $[\pi]$ and a path between $[\pi_2]$ and $[\pi]$. In the second case let $\pi=(a~b)(c~g)(e~d)(f~h)$. Then by (1), there exists a path between $[\pi_1]$ and $[\pi]$ and by (2), there exists a path between $[\pi_2]$ and $[\pi]$. In both cases we then get a path between $[\pi_1]$ and $[\pi_2]$.

This shows that all vertices of type $[2^4]$ in $[A_8]_0$ are in the same component $\widetilde{\Lambda}_8$ of $\widetilde{P}_0(A_8)$. Collecting the types met along the considered paths we also  get  $\mathcal{T}(\widetilde{\Lambda}_8)\supseteq\{[2^4], [2,6], [1^{2},3^2]\}.$ It is also clear that $[4^2]\in \mathcal{T}(\widetilde{\Lambda}_8)$ because $[(1~2~3~4)(5~6~7~8)]^2=(1~3)(2~4)(5~7)(6~8)$. To show that  $\mathcal{T}(\widetilde{\Lambda}_8)=\{[2^4], [2,6], [1^{2},3^2],[4^2]\}$ it is enough to observe that the set of types $\{[2^4], [2,6], [1^{2},3^2],[4^2]\}$ is closed by proper powers within the types admissible for $A_8$. Finally, since the type $[1^4,2^2]$ is admissible for $\widetilde{\Omega}_8$ but not for $\widetilde{\Lambda}_8$ we deduce that $\widetilde{\Lambda}_8\neq \widetilde{\Omega}_8$.

Next let $n=9$ and let $[\pi_1]$, $[\pi_2]$ be distinct elements in $[A_9]_0$ of type $[1, 2^4]$. If $\pi_1,\pi_2$ have the same fixed point we are inside a copy of $A_8$ and so there is a path between $[\pi_1]$ and $[\pi_2]$ in $\widetilde{P}_0(A_8)$, which is also a path in $\widetilde{P}_0(A_9).$ If $\pi_1,\pi_2$ have a different fixed point, without loss of generality we may assume that $\pi_1$ fixes $9$ and $\pi_2$ fixes $8$.
Now consider $\varphi_1=(1~2)(3~4)(5~6)(7~8)$ which fixes $9$ and $\varphi_2= (1~2)(3~4)(5~6)(7~9)$ which fixes $8$. We have the following path between $[\varphi_1]$ and $[\varphi_2]$:
$$[\varphi_1], [(1~3~5~2~4~6)(7~8)], [(1~5~4)(3~2~6)], [(1~3~5~2~4~6)(7~9)],[\varphi_2].$$

Since $\pi_i$ and $\varphi_i$ have the same fixed point, there is a path between $[\pi_i]$ and $[\varphi_i]$, for all $i=1,2$ and thus also a path between $[\pi_1]$ and $[\pi_2]$. This shows that all the vertices of type $[1,2^4]$ in $[A_9]_0$ are in the same component $\widetilde{\Lambda}_9$ of $\widetilde{P}_0(A_9)$. Collecting the types met along the paths we get $\mathcal{T}(\widetilde{\Lambda}_9)\supseteq\{[1,2^4], [1,2,6], [1^{3},3^2]\}.$ It is also clear that $[1,4^2]\in \mathcal{T}(\widetilde{\Lambda}_9)$ because $[(1~2~3~4)(5~6~7~8)]^2=(1~3)(2~4)(5~7)(6~8)$. Arguing as for $n=8$, we then get  $$\mathcal{T}(\widetilde{\Lambda}_9)=\{[1,2^4], [1,2,6], [1^{3},3^2],[1,4^2]\}.$$ Finally, since the type $[1^5,2^2]$ is admissible for $\widetilde{\Omega}_9$ but not for $\widetilde{\Lambda}_9$ we deduce that $\widetilde{\Lambda}_9\neq \widetilde{\Omega}_9$.

\end{proof}
%------------------------------------------------------------------------------------%
\begin{thm}\label{thm:15} Let $n\in\{8, 9\}$. Then
the values of $c_0(A_n)=\widetilde {c}_0(A_n)$, $c_0(\mathcal{T}(A_n))$ and $c_0(\mathcal{O}(A_n))$ are given in Table \ref{eqtable5} below.
\begin{table}[ht]
\caption{$c_0(A_n)=\widetilde {c}_0(A_n)$, $c_0(\mathcal{T}(A_n))$ and $c_0(\mathcal{O}(A_n)),$ for $n=8, 9$.}\label{eqtable5}
\begin{center}
\begin{tabular}{|c|c|c|}
\hline
$n$  & $8$ & $9$\\
\hline
$c_0(A_n)=\widetilde {c}_0(A_n)$ & $962$ & $5442$\\
\hline
$c_0(\mathcal{T}(A_n))$ & $3$ & $4$\\

\hline
$c_0(\mathcal{O}(A_n))$ & $2$ & $2$\\

\hline
\end{tabular}
\end{center}
\end{table}
\end{thm}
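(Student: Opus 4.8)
The plan is to follow Procedure~\ref{procedureA_n} in both degrees $n=8$ and $n=9$, organising the element types of $A_n$ into groups governed by the three or four components we already understand, and then summing the multiplicities $\widetilde{c}_0(A_n)_{T_j}=\mu_{T_j}[A_n]/k_{C_j}(T_j)$. First I would list $\mathcal{T}_0(A_n)$ explicitly. For $n=8$ the admissible types (with even permutations only) are $[1^4,2^2]$, $[1^2,2^2,3]$, $[1^3,3]$, $[2^2,3]$, $[1,3,4]$, $[2^4]$, $[2,6]$, $[1^2,3^2]$, $[4^2]$, $[1^3,5]$, $[1,5,2]$ (type $[1,2,5]$), $[1^2,2^2,\dots]$ etc.\ together with $[3,5]$, $[1,2,4,\dots]$ — I would enumerate them carefully from the partition count. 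For $n=9$ one adds the types involving a ninth fixed point or a $9$-cycle, in particular $[1^5,2^2]$, $[1^4,3]$, $[1,2^4]$, $[1^3,3^2]$, $[1,2,6]$, $[1,4^2]$, $[1^4,5]$, $[1^2,2,5]$, $[1^2,7]$, $[3^3]$, $[9]$, $[1,3,5]$, $[4,5]$, $[2,7]$, and $[1^2,3,\dots]$ — again read off from $\mathcal{T}_0(9)$.

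Next I would distribute these types among the known components. By Lemma~\ref{lem:12}, the component $\widetilde{\Omega}_n$ absorbs $[1^{n-4},2^2]$, $[1^{n-3},3]$, $[1^{n-7},2^2,3]$ and, tracing the paths, also $[1^{n-5},2,4]$, $[1^{n-8},2^3,\dots]$ — i.e.\ I would compute $\mathcal{T}(\widetilde{\Omega}_n)=V_{C([1^{n-4},2^2])}$ explicitly by closing the reachable set under ``adjacent in $\widetilde{P}_0(A_n)$''. By Lemma~\ref{lem:13}, the component $\widetilde{\Lambda}_n$ accounts exactly for the types in $\mathcal{T}(\widetilde{\Lambda}_n)=\{[1^{n-8},2^4],[1^{n-8},2,6],[1^{n-6},3^2],[1^{n-8},4^2]\}$. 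By Lemma~\ref{p-isolated} (with $p=7$ for $n=8$, and $p=7$ for $n=9$), all vertices of order $7$ are isolated and contribute $(p-2)!$ or $(p+1)(p-2)!$ isolated components; similarly one handles the $5$-elements and, for $n=9$, the $9$-elements and the type $[3^3]$ via \cite[Corollary 5.5]{bis}. The remaining types — e.g.\ $[1^3,5]$, $[1,2,5]$, $[3,5]$ for $n=8$ — I would check are either isolated or already swallowed by $\widetilde{\Omega}_n$; the key mechanical point is that a type with all distinct terms is never a proper power (\cite[Lemma 5.2]{bis}), so types such as $[1,2,4]$, $[2,6]$, $[3,5]$ sit as leaves attached to the relevant component.

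For each selected representative type $T_j$, I would then pin down $k_{C_j}(T_j)$ — the number of vertices of type $T_j$ in its component — by the local analysis already illustrated in the proofs of Theorems~\ref{thm:11}: for a type admitting no proper power, the vertices of that type in a component are exactly those reachable by square-root relations, so one counts preimages under $\psi\mapsto\psi^2$ of a fixed square-free-type vertex. Having $k_{C_j}(T_j)$ and $\mu_{T_j}[A_n]$ from \eqref{countilde}--\eqref{count}, I sum $\widetilde{c}_0(A_n)_{T_j}$ over the $c_0(\mathcal{T}(A_n))$ representatives. Counting the distinct components $\widetilde{\Omega}_n$, $\widetilde{\Lambda}_n$ and the isolated-vertex families gives $c_0(\mathcal{T}(A_n))=3$ for $n=8$ and $4$ for $n=9$; projecting via $\widetilde{o}$ and noting which prime orders collapse gives $c_0(\mathcal{O}(A_n))=2$ in both cases, with the main $\mathcal{O}_0$-component containing $\{2,3,4,6\}$ and the isolated vertices being $5,7$ (and $9$ joining the main component for $n=9$, as remarked after Corollary~\ref{complete}). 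The main obstacle is the careful bookkeeping: ensuring every type in $\mathcal{T}_0(A_8)$ and $\mathcal{T}_0(A_9)$ has been assigned to exactly one component, that $\mathcal{T}(\widetilde{\Omega}_n)$ has been computed correctly (so no type is double-counted between $\widetilde{\Omega}_n$ and the isolated families), and that each $k_{C_j}(T_j)$ is right — a single miscounted fibre size propagates into the final totals $962$ and $5442$.
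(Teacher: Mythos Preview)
Your overall strategy matches the paper's: apply Procedure~\ref{procedureA_n} using Lemmas~\ref{lem:12}, \ref{lem:13}, and \ref{p-isolated}, then sum the contributions. However, two concrete errors in your plan would derail the computation.

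First, your description of $\mathcal{O}_0(A_n)$ is wrong and internally inconsistent: you claim both $5$ and $7$ are isolated vertices while also asserting $c_0(\mathcal{O}(A_n))=2$. In fact $5$ is \emph{not} isolated. For $n=8$ the type $[3,5]\in\mathcal{T}_0(A_8)$ has order $15$, and $[3,5]^5=[1^5,3]\in\mathcal{T}(\widetilde{\Omega}_8)$ while $[3,5]^3=[1^3,5]$; hence $[1^3,5]$ and $[3,5]$ both lie in $\widetilde{\Omega}_8$, and $5,15\in V_{\widetilde{o}(\widetilde{\Omega}_8)}$. The same mechanism (via $[1,3,5]$) works for $n=9$. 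So the order-$5$ elements are absorbed into $\widetilde{\Omega}_n$, not handled ``similarly'' to the $7$-elements. Only $7$ is isolated in $\mathcal{O}_0(A_n)$, giving the two components. Relatedly, several types you list (e.g.\ $[1,2,5]$ for $n=8$, $[2,7]$ and $[1^2,2,5]$ for $n=9$) are odd permutations and do not belong to $\mathcal{T}_0(A_n)$ at all.

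Second, your recipe for $k_{C_j}(T_j)$ via ``square-root relations'' does not cover the $[3^3]$ component when $n=9$. The type $[3^3]$ has no proper power and is a proper power only of $[9]$; the relevant root is a \emph{cube} root, not a square root. The paper's argument is that any path leaving a vertex $[\varphi]$ of type $[3^3]$ must go to some $[\psi]$ of type $[9]$ with $\psi^3=\varphi$, and the only neighbour of $[\psi]$ of type $[3^3]$ is again $[\psi^3]=[\varphi]$; hence $k_{C_4}([3^3])=1$ and $\widetilde{c}_0(A_9)_{[3^3]}=\mu_{[3^3]}[A_9]=1120$. Without this step your count for $n=9$ would be off. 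With these two corrections the sums are $960+1+1=962$ and $4320+1+1+1120=5442$, exactly as in the paper.
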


\begin{proof}
Let $n=8$. By Lemma~\ref{p-isolated}, the prime $7$ is isolated in $\mathcal{O}_0(A_8).$ It follows that also
$T_1=[1, 7]$ is isolated in $\widetilde{P}_0(A_8)$ and thus $\widetilde {c}_0(A_8)_{T_1}=960.$ By Lemma~\ref{lem:12},
all the vertices  of $\widetilde{P}_0(A_8)$ of type $T_2=[1^4, 2^2]$ are in $\widetilde{\Omega}_8$, so that $\widetilde {c}_0(A_8)_{T_2}=1$,
and $\mathcal{T}(\widetilde{\Omega}_8) \supseteq \{[1^{4}, 2^2],[1^{5},3], [1,2^2,3] \}.$
Moreover looking at the types admissible for $A_8$ which can have as proper power one of the types in $\{[1^{4}, 2^2],[1^{5},3], [1,2^2,3] \}$,
we get immediately that $$\mathcal{T}(\widetilde{\Omega}_8) =\{[1^{4}, 2^2],[1^{5},3], [1,2^2,3] , [3, 5], [1^3, 5], [1^2, 2, 4]\}.$$
Thus $$V_{\widetilde{o}(\widetilde{\Omega}_8)} =\{2,3,4,5,6,15\},$$ so that $\widetilde{o}(\widetilde{\Omega}_8)$ is included in the component of $\mathcal{O}_0(A_8)$ containing $2$.

By Lemma ~\ref{lem:13}, all the vertices of type $T_3=[2^4]$ are in $\widetilde{\Lambda}_8$ and
$$\mathcal{T}(\widetilde{\Lambda}_8)=\{[2^4], [2,6], [1^{2},3^2],[4^2]\},\quad \widetilde{o}(\widetilde{\Lambda}_8)=\{2,3,4,6\} $$
so that $\widetilde {c}_0(A_8)_{T_3}=1$ and the procedure \ref{procedureA_n} closes giving $c_0(A_8)=962$. The discussion above shows also that $c_0(\mathcal{T}(A_8))=3$ and $c_0(\mathcal{O}(A_8))=2.$

Let $n=9$. By Lemma~\ref{p-isolated}, the prime $7$ is isolated in $\mathcal{O}_0(A_9).$ Thus, for $T_1=[1^2, 7]$  we have $\widetilde {c}_0(A_9)_{T_1}=4320$.  By Lemma~\ref{lem:12},
all the vertices of type $T_2=[1^5, 2^2]$ are in $\widetilde{\Omega}_9$ and $\mathcal{T}(\widetilde{\Omega}_9) \supseteq \{[1^{5}, 2^2],[1^{6},3], [1^2,2^2,3] \}.$ In particular  $\widetilde {c}_0(A_9)_{T_2}=1$.
Moreover, looking at the types admissible for $A_9$ which can have as proper power one of the types in $\{[1^{5}, 2^2],[1^{6},3], [1^2,2^2,3] \},$
we get  that $$\mathcal{T}(\widetilde{\Omega}_9) =\{[1^{5}, 2^2],[1^{6},3], [1^2,2^2,3] , [1,3, 5], [1^4, 5], [1^3, 2, 4]\}$$
 and so $$V_{\widetilde{o}(\widetilde{\Omega}_9)} =\{2,3,4,5,6,15\}.$$

 By Lemma ~\ref{lem:13}, all the vertices of type $T_3=[1,2^4]$ are in $\widetilde{\Lambda}_9$ and
$$\mathcal{T}(\widetilde{\Lambda}_9)=\{[1,2^4], [1,2,6], [1^{3},3^2],[1,4^2]\},\quad V_{\widetilde{o}(\widetilde{\Lambda}_9)}=\{2,3,4,6\}$$
so that $\widetilde {c}_0(A_9)_{T_3}=1$.

Finally consider the type $T_4=[3^3]$. Let  $\varphi\in A_9$ with $T_{\varphi}=T_4$ and let $C_4$ be the component of $\widetilde{P}_0(A_9)$ containing $[\varphi]$. We show that $k_{C_4}(T_4)=1$.
Note, first of all, that $T_4$ has no proper power and that it is the proper power only of $[9].$ Moreover, by~\cite[Lemma 5.2]{bis}, the type $[9]$
is not a proper power and its only proper power has exponent $a\in\mathbb{N}$, with $1<a<9$ and $a$ not coprime to $9$, so that $a=3$.
Thus, by Lemma \ref{C(T)}, we have $V_{C(T_4)}=\{T_4,[9]\}=\mathcal{T}(C_4)$ and so also $V_{ \widetilde{o}(C_4)}=\{3,9\}.$

 We show that $[\varphi]$ is the only vertex of type $T_4$ in $C_4$, showing that there exists no path between $[\varphi]$ and some $[\sigma]\in [A_9]_0$ such that $[\sigma]\neq [\varphi]$ and $T_{\sigma}=T_4.$ Let $\gamma$ be a path of length at least $1$ with end vertex $[\varphi]$. Since the only types admissible for $C_4$ are $T_4$ and $[9]$ and in a quotient power graph there exists no proper edge incident to vertices  having the same type, we have that the first proper  edge of $\gamma$  is $\{[\varphi], [\psi]\}$, for some $\psi\in A_9$ such that $T_{\psi}=[9]$. Thus, by  \eqref{lato}, $\{\varphi,\psi\}$ is a proper edge in $P_0(A_9)$ and so necessarily  $\psi^3=\varphi.$ Consider  now the proper edges of $\widetilde{P}_0(A_9)$ of type $\{[\psi], [\delta]\}$, for some $[\delta]\in [A_9]_0$. Again, since the only types admissible for $C_4$ are $T_4$ and $[9]$,
 we get  $T_{\delta}=T_4.$ Thus, by \eqref{lato}, $\psi$ and $\delta$ are one the power of the other. It follows that $\delta=\psi^3=\varphi.$ This means that $\gamma$ is a path of length $1$ in which one end is not of type $T_4.$
Then $\widetilde {c}_0(A_9)_{T_4}=\mu_{T_4}[A_9]=1120.$ Since all the types admissible for $A_9$ have appeared, the procedure \ref{procedureA_n} stops giving $\widetilde{c}_0(A_9)=5442$. The discussion above shows also that $c_0(\mathcal{T}(A_9))=4$ and that $c_0(\mathcal{O}(A_9))=2.$

\end{proof}
\begin{cor} \label{more-even} Let $n\in \mathbb{N}$ with $4\leq n\leq 9.$ Then the vertices of even order in $\widetilde{P}_0(A_n)$ are distributed in more than one component.
\end{cor}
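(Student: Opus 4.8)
The plan is to establish the statement by a short case analysis on the degree $n$, reading off the required information from the explicit descriptions of the components of $\widetilde{P}_0(A_n)$ produced in the proofs of Theorems \ref{thm:11} and \ref{thm:15}. The only elementary observation needed is that a vertex of type $[1^{n-4},2^2]$ and a vertex of type $[1^{n-8},2^4]$ both have order $2$, hence even order; so it suffices, in each degree, to exhibit two distinct components of $\widetilde{P}_0(A_n)$ each containing a vertex of one of these two types.

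For $n\in\{8,9\}$ I would combine Lemmas \ref{lem:12} and \ref{lem:13}. By Lemma \ref{lem:12}\,(i) the component $\widetilde{\Omega}_n$ contains every vertex of type $[1^{n-4},2^2]$, and by Lemma \ref{lem:13}\,(i) the component $\widetilde{\Lambda}_n$ contains every vertex of type $[1^{n-8},2^4]$; both these types have order $2$. Since $\widetilde{\Lambda}_n\neq\widetilde{\Omega}_n$ by Lemma \ref{lem:13}\,(iii), we obtain two distinct components carrying vertices of even order, which settles these two degrees.

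For $4\leq n\leq 7$ I would instead invoke the counting performed in the proof of Theorem \ref{thm:11}. For $n=4$ the three vertices of type $[2^2]$ are isolated, hence form three one-vertex components of even order; for $n=5$ the graph $\widetilde{P}_0(A_5)$ is totally disconnected and already has $\mu_{[1,2^2]}[A_5]=15$ vertices of type $[1,2^2]$, each its own component; for $n=6$ the vertices of type $[1^2,2^2]$ are spread over the $45$ path components described there; and for $n=7$ the $35$ components admissible for the type $[1^4,3]$ each contain a vertex of type $[2^2,3]$ (of order $6$) and a vertex of type $[1^3,2^2]$. In every one of these four degrees the number of components of $\widetilde{P}_0(A_n)$ meeting the vertices of even order is therefore at least $3$, and in particular exceeds $1$.

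The argument is essentially a matter of bookkeeping, so no genuine difficulty arises; the one point to watch is simply to confirm that the components produced really do contain a vertex of even order, which is automatic because the types $[1^{n-4},2^2]$, $[1^{n-8},2^4]$ and $[2^2,3]$ all have even order. Combining the two cases disposes of all $n$ with $4\leq n\leq 9$.
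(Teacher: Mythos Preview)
Your proposal is correct and follows essentially the same approach as the paper: the paper's own proof simply refers back to the case-by-case analysis in Theorems \ref{thm:11} and \ref{thm:15}, and you have spelled out that analysis in more detail. Your use of Lemmas \ref{lem:12} and \ref{lem:13} for $n\in\{8,9\}$ is exactly what the proof of Theorem \ref{thm:15} provides, and your bookkeeping for $4\leq n\leq 7$ accurately reflects the component descriptions in Theorem \ref{thm:11}.
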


\begin{proof} For $4\leq n\leq 7,$ this is immediate by the case by case analysis of the proof of Theorem \ref {thm:11}.  For $8\leq n\leq 9,$ it follows  by the case by case analysis of the proof of Theorem \ref {thm:15}.
\end{proof}
\vskip 0.4 true cm

%------------------------------------------------------------------------------------%
 %------------------------------------------------------------------------------------%
\section{ Higher degrees}
\vskip 0.4 true cm
%------------------------------------------------------------------------------------%
In this section, we determine $c_0(A_n)=\widetilde {c}_0(A_n)$, $c_0(\mathcal{T}(A_n))$ and $c_0(\mathcal{ O}(A_n))$ for $n\geq 10.$
We start showing that the fact that the vertices of even order are distributed in more than one component is a peculiarity of  $4\leq n\leq 9.$

\begin{lem}\label{lem:16}
For every $n\geq 10$, the vertices of even order of $\widetilde{P}_0(A_n)$  are contained in  $\widetilde{\Omega}_n$.
\end{lem}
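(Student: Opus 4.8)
The plan is to show that every vertex $[\psi]$ of $\widetilde{P}_0(A_n)$ with $o(\psi)$ even lies in $\widetilde{\Omega}_n$ by producing, for each such $\psi$, a path in $\widetilde{P}_0(A_n)$ joining $[\psi]$ to a vertex of type $[1^{n-4},2^2]$, which belongs to $\widetilde{\Omega}_n$ by Lemma~\ref{lem:12}(i). Since $\langle\psi\rangle$ contains an element of order $2$, replacing $\psi$ by a suitable power we may first assume $o(\psi)=2$; indeed if $o(\psi)$ is even then $[\psi^{o(\psi)/2}]$ is an involution adjacent to $[\psi]$ in $\widetilde{P}_0(A_n)$ (via~\eqref{lato} and the fact that $\psi^{o(\psi)/2}$ is a power of $\psi$), so it suffices to connect every involution class to $\widetilde{\Omega}_n$. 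An involution in $A_n$ has type $[1^{n-2k},2^{2k}]$ for some $k\geq 1$, so the task reduces to: for each $k\geq 2$ (the case $k=1$ being Lemma~\ref{lem:12}(i)), show that every vertex of type $[1^{n-2k},2^{2k}]$ is joined by a path to a vertex of type $[1^{n-4},2^2]$.

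The key step is an ``unpacking'' move that decreases $k$ by one at the level of types. Given $\pi$ of type $[1^{n-2k},2^{2k}]$ with $k\geq 2$, pick two of its transpositions, say $(a~b)(c~d)$, together with the remaining $2(k-1)$ transpositions forming a permutation $\rho$ of type $[1^{n-2k},2^{2(k-1)}]$ on the complementary support, so $\pi=(a~b)(c~d)\rho$. The product $(a~b)(c~d)(x~y~z)$, where $x,y,z$ are three fixed points of $\pi$ (available since $n\geq 10$ and $k\leq n/2$ leaves at least $n-2k\geq$ enough room — one must check the count, but for $n\geq 10$ there is always slack unless $k$ is maximal, a case handled separately), has order $6$, hence is adjacent to both $[\pi]$-side pieces and to the $3$-cycle $[(x~y~z)]$. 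The cleanest realisation: the element $\sigma=(a~c~b~d)\cdot(\text{a }6\text{-cycle on }x,y,z\text{ and three moved points of }\rho)\cdots$ is engineered so that $\sigma^2$ has type $[1^{n-4},2^2]$ while $\sigma$ is adjacent (by being a cube root or via an intermediate element of order divisible by both $2$ and the order of a piece of $\rho$) to $[\pi]$. Concretely I would merge one transposition $(c~d)$ of $\pi$ with a $6$-cycle absorbing part of $\rho$, producing an element whose square drops two transpositions; iterating, and using the already-established connectivity within type $[1^{n-4},2^2]$, collapses any $k$ down to $k=1$.

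The main obstacle — and the point requiring genuine care — is the \textbf{boundary case} where $\pi$ is a fixed-point-free (or near fixed-point-free) involution, i.e. $2k\in\{n,n-1\}$, so that there are too few fixed points to host an auxiliary $3$-cycle. Here the argument must instead ``borrow'' space by first passing to an element of order $6$ obtained by fusing three of the transpositions of $\pi$ into two $3$-cycles times a leftover, e.g. replacing $(a~b)(c~d)(e~f)$ by an element of order $6$ whose cube is $(a~b)(c~d)(e~f)$-compatible and whose square lands in a type with more fixed points; this is exactly the kind of move made in Lemma~\ref{lem:13} for $n=8,9$, and the hypothesis $n\geq 10$ is what guarantees enough room to perform it cleanly and then reconnect. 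One also has to treat parity: if $n$ is odd the maximal involution type is $[1,2^{n-1}]$, and if $n\equiv 2\pmod 4$ the type $[2^{n/2}]$ is itself even-order and fixed-point-free — in both subcases a single explicit path (mirroring the $[\varphi_1],\dots,[\varphi_2]$ path in the proof of Lemma~\ref{lem:13} for $n=9$, lifted to general $n$) reduces to an involution with at least two fixed points, after which the generic argument applies.

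Once every involution class is shown to meet $\widetilde{\Omega}_n$, Lemma~\ref{lem:12}(iii) (that $\widetilde{\Omega}_n$ contains \emph{all} vertices of any type admissible for it) upgrades this to: all vertices of even order lie in $\widetilde{\Omega}_n$, because each such vertex is adjacent to an involution class contained in $\widetilde{\Omega}_n$ and hence lies in $\widetilde{\Omega}_n$ as well. I would organise the write-up as an induction on $k$ with the base case $k=1$ being Lemma~\ref{lem:12}(i), the inductive step being the unpacking move, and a separate short paragraph dispatching the fixed-point-poor extreme types using $n\geq 10$; throughout, edges are certified via~\eqref{lato} by exhibiting one element as a power of another, so no new connectivity machinery is needed beyond what Lemmas~\ref{lem:12} and~\ref{lem:13} already provide.
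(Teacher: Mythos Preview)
Your reduction to involutions is correct and matches the paper. After that, however, what you have written is a strategy outline, not a proof: the ``unpacking move'' element $\sigma$ is never actually specified (you write ``is engineered so that\ldots''), the inductive step is only sketched, and the fixed-point-poor boundary case is deferred entirely to ``a single explicit path\ldots mirroring Lemma~\ref{lem:13}, lifted to general $n$'' without saying what that path is. There is also a notational slip: an involution with $2k$ transpositions has type $[1^{n-4k},2^{2k}]$, not $[1^{n-2k},2^{2k}]$; with the correct count your ``generic'' step already fails for $n=10$, $k=2$, since only two fixed points are available, not the three you need for an auxiliary $3$-cycle. So the boundary case is not an exotic extreme but the very first case beyond the base.

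The paper's argument avoids all of this with a single, uniform construction. Write the involution as $\pi^k=(a\,b)(c\,d)(e\,f)(g\,h)\sigma$ with $\sigma$ the remaining product of transpositions (possibly $id$), and set $\varphi=(a\,c\,e\,b\,d\,f)(g\,h)\sigma$. Then $\varphi^3=\pi^k$ and $\varphi^2=(a\,e\,d)(c\,b\,f)$. Since $n\geq 10$ there exist $i,j\in N\setminus\{a,b,c,d,e,f,g,h\}$ (this is the \emph{only} place $n\geq 10$ is used, and it needs just two extra points, not three), and one gets the explicit path
\[
[\pi],\ [\pi^k],\ [\varphi],\ [(a\,e\,d)(c\,b\,f)],\ [(a\,e\,d)(c\,b\,f)(g\,h)(i\,j)],\ [(g\,h)(i\,j)]
\]
ending at a vertex of type $[1^{n-4},2^2]\in\mathcal{T}(\widetilde{\Omega}_n)$. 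No induction, no separate boundary case: the trick is to fuse \emph{three} transpositions into a $6$-cycle (not two into a $4$-cycle plus a $3$-cycle on fixed points), so that squaring $\varphi$ kills $\sigma$ and $(g\,h)$ simultaneously and lands on two disjoint $3$-cycles regardless of how large $s$ was.
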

\begin{proof}
By Lemma~\ref{lem:12}, all the vertices of type $[1^{n-4}, 2^2]$
are contained in  $\widetilde{\Omega}_n$ of $\widetilde{P}_0(A_n)$. Let $[\pi] \in [A_n]_0$ with $o(\pi)=2k$, for a positive integer $k$. Then $o(\pi^k)=2$, so that $\pi^k$ is the product of $s$ cycles of length two, for some  $s\in\mathbb{N}$ even. If $s=2$, then $[\pi^k]\in \widetilde{\Omega}_n$ and so $[\pi]\in \widetilde{\Omega}_n$.
If $s\geq 4$ then $\pi^k=(a~b)(c~d)(e~f)(g~h)\sigma$,  for suitable distinct $a, b, c, d, e, f, g, h\in N$ and $\sigma\in A_n$, with
$\sigma=id$ or the product of $s-4$ cycles of order two. Let $\varphi=(a~c~e~b~d~f)(g~h)\sigma$. Then $\varphi\in A_n$ and $\varphi^3=\pi^k$. Due to $n\geq 10$, there exist distinct elements $i, j \in N \setminus \{a, b, c, d, e, f, g, h\}$ and  we have the following path:
$$[\pi], [\pi^k], [\varphi], [(a~e~d)(c~b~f)], [(a~e~d)(c~b~f)(g~h)(i~j)], [(g~h)(i~j)].$$
Since $[(g~h)(i~j)]\in \widetilde{\Omega}_n$, we also get  $[\pi]\in \widetilde{\Omega}_n$.
\end{proof}
%------------------------------------------------------------------------------------%
\begin{thm}\label{thm:17} $c_0(A_{10})=\widetilde {c}_0(A_{10})=29345$, $c_0(\mathcal{T}(A_{10}))=3$ and
$c_0(\mathcal{O}(A_{10}))=1.$
\end{thm}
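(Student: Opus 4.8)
The plan is to apply Procedure \ref{procedureA_n}, whose first task is to pin down the components of $P_0(\mathcal{T}(A_{10}))$; this simultaneously gives $c_0(\mathcal{T}(A_{10}))$ and tells us how many steps the procedure has. I would first invoke Lemma \ref{lem:16}: every vertex of even order lies in $\widetilde{\Omega}_{10}$, so every even-order type of $A_{10}$ belongs to $\mathcal{T}(\widetilde{\Omega}_{10})$, and by Lemma \ref{lem:12}(iii) $\widetilde{\Omega}_{10}$ is then the unique component over each of those types. It remains to sort the odd-order nontrivial types, which (after listing $\mathcal{T}_0(A_{10})$ as the partitions of $10$ with an even number of even parts) are $[1^7,3]$, $[1^4,3^2]$, $[1,3^3]$, $[1^5,5]$, $[5^2]$, $[1^3,7]$, $[1,9]$, $[1^2,3,5]$, $[3,7]$. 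Using the power operation on partitions one checks $[1^3,2^2,3]^2=[1^7,3]$, $[1^2,2,6]^2=[1^4,3^2]$, $[1,2^2,5]^2=[1^5,5]$ (the left-hand sides being even-order types), $[1^2,3,5]^3=[1^5,5]$, $[3,7]^7=[1^7,3]$ and $[3,7]^3=[1^3,7]$; hence $[1^7,3],[1^4,3^2],[1^5,5],[1^2,3,5],[3,7],[1^3,7]$ all lie in $\mathcal{T}(\widetilde{\Omega}_{10})$ as well.

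The three types $[1,9]$, $[1,3^3]$, $[5^2]$ stay out, and I would verify this directly. The type $[1,9]$ has distinct terms, so is not a proper power, and its only proper power is $[1,3^3]$; conversely the only partition $T$ of $10$ with $T^a=[1,3^3]$ for a proper exponent would have to be $[1,9]$ or $[1,3,6]$, and $[1,3,6]\notin\mathcal{T}(A_{10})$ because the $6$-cycle makes it odd — so $\{[1,9],[1,3^3]\}$ is a component. Similarly $[5^2]$ has no proper power, and the only partition of $10$ whose suitable power is $[5^2]$ is the odd cycle type $[10]\notin\mathcal{T}(A_{10})$, so $[5^2]$ is isolated in $P_0(\mathcal{T}(A_{10}))$. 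Thus $P_0(\mathcal{T}(A_{10}))$ has exactly the three components $\widetilde{t}(\widetilde{\Omega}_{10})$, $\{[1,9],[1,3^3]\}$ and $\{[5^2]\}$, giving $c_0(\mathcal{T}(A_{10}))=3$ and a three-step procedure.

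For the count I would pick one representative type from each component and evaluate $\widetilde{c}_0(A_{10})_{T_j}=\mu_{T_j}[A_{10}]/k_{C_j}(T_j)$, with $\mu$-values from \eqref{count} and \eqref{countilde}. For $T_1=[1^6,2^2]$, Lemma \ref{lem:12}(i) gives that all vertices of this type sit in $\widetilde{\Omega}_{10}$, so $\widetilde{c}_0(A_{10})_{T_1}=1$. For $T_2=[1,3^3]$, I would argue as in the treatment of $T_4=[3^3]$ for $A_9$ in the proof of Theorem \ref{thm:15}: taking $\psi$ a $9$-cycle, the component of $[\psi^3]$ is a star with centre $[\psi^3]$ and leaves the classes of $9$-cycles $\chi$ with $\langle\chi^3\rangle=\langle\psi^3\rangle$ (a class of type $[1,9]$ has a single neighbour), so $k_{C_2}(T_2)=1$ and $\widetilde{c}_0(A_{10})_{T_2}=\mu_{[1,3^3]}[A_{10}]=\frac{10!}{27\cdot 6}/\phi(3)=11200$. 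For $T_3=[5^2]$, since $[5^2]$ is isolated in $P_0(\mathcal{T}(A_{10}))$, Lemma \ref{isolated}(i) makes every $[5^2]$-vertex isolated in $\widetilde{P}_0(A_{10})$, so $\widetilde{c}_0(A_{10})_{T_3}=\mu_{[5^2]}[A_{10}]=\frac{10!}{25\cdot 2}/\phi(5)=18144$. Summing over the three steps, $c_0(A_{10})=\widetilde{c}_0(A_{10})=1+11200+18144=29345$. Finally, for $\mathcal{O}_0(A_{10})$ I would list $O_0(A_{10})=\{2,3,4,5,6,7,8,9,10,12,15,21\}$ and note it is connected under the divisibility adjacency (e.g.\ $7-21-3-6-2$, $5-10-2$, $4-8$, $9-3$), so $c_0(\mathcal{O}(A_{10}))=1$.

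I expect the main obstacle to be the bookkeeping in the second paragraph — making sure no odd-order type has been missed in the chase into $\widetilde{\Omega}_{10}$ — together with the genuinely asymmetric point that $[1^5,5]$ is absorbed (via the even type $[1,2^2,5]$) while $[5^2]$ is not, the parallel asymmetry being that $[1^7,3]$ is absorbed but $[1,3^3]$ is not; this is exactly where the $9$-cycle structure of $A_{10}$ behaves differently from what one might naively expect. Computing $k_{C_2}([1,3^3])=1$ is the other delicate point, but it is a verbatim copy of the $A_9$ argument already in the text.
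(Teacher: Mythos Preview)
Your proof is correct and follows essentially the same route as the paper's: invoke Lemma~\ref{lem:16} for the even-order types, absorb the remaining odd-order types into $\mathcal{T}(\widetilde{\Omega}_{10})$ one by one, isolate $[5^2]$ and the pair $\{[1,9],[1,3^3]\}$, then run Procedure~\ref{procedureA_n} with the same three representative types and the same $A_9$-style argument for $k_{C_2}([1,3^3])=1$. The only cosmetic differences are that you argue with type powers (e.g.\ $[1^2,2,6]^2=[1^4,3^2]$) where the paper writes explicit permutation paths, and you verify $c_0(\mathcal{O}(A_{10}))=1$ by listing $O_0(A_{10})$ rather than by noting that $\widetilde{o}(\widetilde{\Omega}_{10})$ contains every prime below~$10$.
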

\begin{proof}
By Lemma~\ref{lem:16}, the vertices of even order in $[A_{10}]_0$ are contained in $\widetilde{\Omega}_{10}$. Thus, for $T_1=[1^6,2^2],$ we have $\widetilde {c}_0(A_{10})_{T_1}=1.$ Moreover from the paths:
$$ [(1~2~3)(4~5~6)], [(1~2~3)(4~5~6)(7~8)(9~10)],$$
and
$$ [(1~2~3~4~5~6~7)], [(1~2~3~4~5~6~7)(8~9~10)], [(8~9~10)], [(8~9~10)(1~2~3~4~5)],$$
$$ [(1~2~3~4~5)], [(1~2~3~4~5)(6~7)(8~9)]$$

\noindent we deduce that the types $[1^4, 3^2]$, $[1^3, 7]$, $[3, 7]$, $[1^7, 3]$, $[1^2, 3, 5]$ and $[1^5, 5]$ are  admissible for  $\widetilde{\Omega}_{10}$. Since $[10]\notin  \mathcal {T}(A_{10}),$ the vertices of type $T_2=[5^2]\in \mathcal {T}(A_{10})$ are instead isolated and thus $\widetilde {c}_0(A_{10})_{T_2}=\mu_{[5^2]}[A_{10}]=18144$. The same argument used for $\widetilde{P}_0(A_9)$ shows also that for $T_3=[1, 3^3]$ we have $\widetilde {c}_0(A_{10})_{T_3}=\mu_{[1, 3^3]}[A_{10}]=11200$. Since all the types admissible for $A_{10}$ have appeared, the procedure \ref{procedureA_n} stops giving $\widetilde{c}_0(A_{10})=29345$ and $c_0(\mathcal{T}(A_{10}))=3$. Moreover, since $\widetilde{o}(\widetilde{\Omega}_{10})$ contains all the prime less than $10,$ we deduce that $c_0(\mathcal{O}(A_{10}))=1.$

\end{proof}
%------------------------------------------------------------------------------------%
\begin{lem}\label{lem:18}
Let $n\geq 11$. Then the vertices of order $3$ in $\widetilde{P}_0(A_n)$ are contained in $\widetilde{\Omega}_n$.
\end{lem}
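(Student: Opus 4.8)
The plan is to prove that every $\psi\in A_n$ with $o(\psi)=3$ satisfies $[\psi]\in\widetilde{\Omega}_n$ by producing, for almost all such $\psi$, an element $\chi\in A_n$ of order $6$ with $\chi^2=\psi$. Once such a $\chi$ is found, $\{[\psi],[\chi]\}$ is a proper edge of $\widetilde{P}_0(A_n)$ (the two classes are distinct, having different orders, and one element is a power of the other), and since $o(\chi)=6$ is even and $n\ge 11>10$, Lemma~\ref{lem:16} puts $[\chi]$, hence $[\psi]$, into $\widetilde{\Omega}_n$. Write $\psi=c_1\cdots c_k$ as a product of $k\ge 1$ disjoint $3$-cycles, so that $\psi$ has type $[1^{n-3k},3^k]$. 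The case $k=1$ is already settled: by Lemma~\ref{lem:12}(ii) one has $[1^{n-3},3]\in\mathcal{T}(\widetilde{\Omega}_n)$, and by Lemma~\ref{lem:12}(iii) the component $\widetilde{\Omega}_n$ then contains every vertex of that type.

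For $k\ge 2$ the construction of $\chi$ rests on three elementary observations: for disjoint $3$-cycles $c=(a\,b\,c)$ and $c'=(d\,e\,f)$ the $6$-cycle $\sigma=(a\,d\,b\,e\,c\,f)$ satisfies $\sigma^{2}=cc'$; for any $3$-cycle $c$ one has $(c^{-1})^{2}=c$; and a $6$-cycle and a transposition are odd permutations, whereas a $3$-cycle is even. If $k\ge 4$, set $\chi=\sigma_1\sigma_2\,c_5^{-1}\cdots c_k^{-1}$, where $\sigma_1,\sigma_2$ are the $6$-cycles with $\sigma_1^{2}=c_1c_2$ and $\sigma_2^{2}=c_3c_4$ on the corresponding twelve points; the displayed factors are pairwise disjoint, $\chi^{2}=\psi$, $o(\chi)=6$, and $\mathrm{sgn}(\chi)=(-1)^{2}=1$, so $\chi\in A_n$. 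If $k\in\{2,3\}$, then $n\ge 11$ gives $n-3k\ge 2$, so $\psi$ fixes two points $p,q$; set $\chi=\sigma_1\,(p\,q)\,c_3^{-1}\cdots c_k^{-1}$ with $\sigma_1^{2}=c_1c_2$, and again $\chi^{2}=\psi$, $o(\chi)=6$, $\mathrm{sgn}(\chi)=(-1)(-1)=1$, so $\chi\in A_n$. In either case $\chi\in A_n$ is an order-$6$ square root of $\psi$, and the reduction of the first paragraph finishes the argument.

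The one delicate point — and the place that forces a little care — is parity: the most naive square root of $\psi$, a single $6$-cycle together with $3$-cycles, is an odd permutation, so one must absorb the extra sign by adjoining either a second $6$-cycle (which consumes two further $3$-cycles of $\psi$, hence requires $k\ge 4$) or a transposition on two fixed points of $\psi$ (which requires $n-3k\ge 2$). The hypothesis $n\ge 11$ is exactly what guarantees that for every $k\ge 2$ at least one of these two routes is available, since $k\in\{2,3\}$ already forces $n-3k\ge 2$ while all larger $k$ are covered by the two-$6$-cycle construction. Beyond that, only routine verifications remain: that the chosen cycles are pairwise disjoint, that $\chi$ has order exactly $6$ (all its cycle lengths divide $6$ and a $6$-cycle occurs), that $\chi^{2}=\psi$ exactly (using $(c_i^{-1})^{2}=c_i$ for the leftover $3$-cycles and $\sigma_j^{2}=c_{2j-1}c_{2j}$), and that $[\psi]\neq[\chi]$, so that $\{[\psi],[\chi]\}$ is a proper edge and $[\psi]$ lies in the same component of $\widetilde{P}_0(A_n)$ as $[\chi]$, namely $\widetilde{\Omega}_n$.
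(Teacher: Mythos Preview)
Your proof is correct and follows essentially the same strategy as the paper: for each $\psi$ of order $3$ you exhibit an even-order square root in $A_n$ and then invoke Lemma~\ref{lem:16}. The constructions coincide almost verbatim (the paper writes the leftover factor as $\sigma^2$ rather than $c_5^{-1}\cdots c_k^{-1}$, which is the same thing since each $c_i$ has order $3$), and the case split $k\in\{2,3\}$ versus $k\ge 4$ matches the paper's $s=2,3$ versus $s\ge 4$. The only genuine difference is the case $k=1$: you dispose of it by quoting Lemma~\ref{lem:12}(ii)--(iii), whereas the paper keeps the argument uniform by taking $\varphi=\pi^2(a\,b)(c\,d)$ on four fixed points (available since $n\ge 11$). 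Either route is fine, and your identification of $k=3$ as the case forcing the hypothesis $n\ge 11$ is exactly right.
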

\begin{proof}
Let $n\geq 11$ and $[\pi] \in [A_n]_0$, with $o(\pi)=3$. Then $\pi$ is the product of $s\geq 1$ cycles of length $3$. To show that $[\pi]\in V_{\widetilde{\Omega}_n}$, we show that whatever $s$ is,  there exists $\varphi\in A_n$ such that  $\varphi^2=\pi$ with $o(\varphi)$ even and apply Lemma~\ref{lem:16}.
 If $s=1$ then, since $n\geq 11$, there exist $a, b, c, d \in N\setminus M_{\pi}$ and we take $\varphi=\pi^2(a~b)(c~d)$. Let $s=2$ and $\pi=(a~b~c)(d~e~f)$.
 Then we take $\varphi=(a~d~b~e~c~f)(g~h)$, where $g, h\in N\setminus M_{\pi}$. Let $s=3$ and $\pi=(a~b~c)(d~e~f)(g~h~i)$. Then we take $\varphi=(a~d~b~e~c~f)(g~i~h)(j~k)$, where $j, k\in N\setminus M_{\pi}$. Finally, let $s\geq 4.$ Then $n\geq 12$ and $\pi=(a~b~c)(d~e~f)(g~h~i)(j~k~l)\sigma$, for suitable $a,b,c,d,e,f,g,h,i,j,k,l\in N$ and $\sigma\in A_n$, with $\sigma^3=id$. We then take $\varphi=(a~d~b~e~c~f)(g~j~h~k~i~l)\sigma^2$.
\end{proof}
%------------------------------------------------------------------------------------%
\begin{lem}\label{lem:19}
Let $n\geq 11$ and $p$ be a prime number such that $\,5\leq p\leq n-3$ and $ p\neq {\frac{n}{2}, \frac{n-1}{2}}.$  Then the vertices of order $p$ in $\widetilde{P}_0(A_n)$ are contained in $\widetilde{\Omega}_n$.
\end{lem}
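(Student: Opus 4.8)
The plan is to attach $[\pi]$ to $\widetilde{\Omega}_n$ by producing, adjacent to it, a vertex that is already known to lie there. Concretely, given $[\pi]\in[A_n]_0$ of order $p$, I would construct $\psi\in A_n$ such that $\pi$ is a power of $\psi$ --- so that $\{[\pi],[\psi]\}$ is a proper edge of $\widetilde{P}_0(A_n)$ by~\eqref{lato}, the two vertices being distinct since their orders differ --- and such that \emph{either} $o(\psi)$ is even, in which case $[\psi]\in\widetilde{\Omega}_n$ by Lemma~\ref{lem:16}, \emph{or} $o(\psi)=3p$, in which case $\psi^p$ has order $3$, so $[\psi^p]\in\widetilde{\Omega}_n$ by Lemma~\ref{lem:18} and $[\pi],[\psi],[\psi^p]$ is a path. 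Either way $[\pi]\in\widetilde{\Omega}_n$. Since $p$ is prime, $\pi=\alpha_1\cdots\alpha_s$ is a product of $s$ disjoint $p$-cycles with $1\le s\le\lfloor n/p\rfloor$ and has $f=n-sp$ fixed points, and I would split the construction of $\psi$ according to $s$ and $f$, using repeatedly that $\alpha^{(p+1)/2}$ is a $p$-cycle squaring to $\alpha$ (as $p$ is odd) and that any two disjoint $p$-cycles are the square of a $2p$-cycle, necessarily an odd permutation.

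When $s$ and $f$ leave enough room, a square root of even order works directly. If $s\ge4$, I would merge $\alpha_1\alpha_2$ and $\alpha_3\alpha_4$ into $2p$-cycles $c_{12},c_{34}$ and take $\psi=c_{12}\,c_{34}\prod_{i=5}^{s}\alpha_i^{(p+1)/2}$, so that $\psi^2=\pi$, $o(\psi)=2p$, and $\psi$ is even (a product of two $2p$-cycles and some $p$-cycles). If $s\in\{2,3\}$ and $\pi$ fixes two points $a,b$, I would replace one merge by a transposition, taking $\psi=c_{12}\,(a\,b)\prod_{i=3}^{s}\alpha_i^{(p+1)/2}$, again with $\psi^2=\pi$, $o(\psi)=2p$ and $\psi$ even. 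If $s=1$, then $f=n-p\ge3$ because $p\le n-3$; when $f\ge4$ I would use two transpositions on four fixed points, $\psi=\alpha_1^{(p+1)/2}(a\,b)(c\,d)$, with $\psi^2=\pi$ of order $2p$ and $\psi$ even.

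The delicate cases are those where $f$ is too small for this parity bookkeeping, and they are where the hypotheses bite. If $s=1$ and $f=3$, that is $p=n-3$, I would instead use the three fixed points for a $3$-cycle: $\psi=\pi\,(a\,b\,c)$ has order $3p$, $\psi^3=\pi^3$ still generates $\langle\pi\rangle$, and $\psi^p$ has order $3$ because $3\nmid p$, so Lemma~\ref{lem:18} applies. If $s\in\{2,3\}$ and $f\le1$, I would first note that $s=2$ would force $2p\in\{n,n-1\}$, hence $p\in\{n/2,\tfrac{n-1}{2}\}$, excluded by hypothesis; so $s=3$, and I would pick a $3p$-cycle $\gamma$ on the support of $\pi$ with $\gamma^3=\pi$ (any product of three disjoint $p$-cycles arises as such a cube), observing that $\gamma\in A_n$ since $3p$ is odd, that $\pi=\gamma^3$ is a power of $\gamma$, and that $\gamma^p$ has order $3$, so Lemma~\ref{lem:18} again closes it. The main obstacle is exactly this clash between the sign constraint and the scarcity of fixed points: the hypotheses $p\ne n/2,\tfrac{n-1}{2}$ are precisely what rules out the irreparable configurations --- the same ones responsible for the extra components in Theorem~A --- while $p\le n-3$ secures $f\ge3$ when $s=1$, and $5\le p$ keeps $\lcm(p,2)=2p$, $\lcm(p,3)=3p$ and $3\nmid p$.
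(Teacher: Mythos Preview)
Your argument is correct and complete: every case is covered, the parity checks are right, and the excluded values $p=\tfrac{n}{2},\tfrac{n-1}{2}$ are used exactly where needed (to dispose of $s=2$, $f\le1$). The idea is the same as the paper's --- bridge $[\pi]$ to a vertex of even order (Lemma~\ref{lem:16}) or of order~$3$ (Lemma~\ref{lem:18}) via a suitable $\psi$ with $\pi\in\langle\psi\rangle$ --- but the organisation differs. The paper works at the level of the power type graph $P_0(\mathcal{T}(A_n))$: it shows each type $T_s=[1^{n-sp},p^s]$ lies in $\mathcal{T}(\widetilde{\Omega}_n)$ and then invokes Lemma~\ref{lem:12}\,(iii) to pull all vertices of that type into $\widetilde{\Omega}_n$; its primary case split is $sp\le n-3$ versus $sp\ge n-2$, using the $3$-cycle trick whenever three fixed points are available (any $s$) and reserving the $2p$-cycle and $3p$-cycle constructions for the crowded cases. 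You instead work directly with permutations, split first on $s$, and prefer even-order square roots whenever parity allows, falling back on the $3$-cycle or $3p$-cycle only for $(s,f)=(1,3)$ and $(s,f)=(3,\le1)$. Your route is slightly more elementary in that it avoids the type-graph apparatus (Lemmas~\ref{C(T)} and~\ref{lem:12}\,(iii)); the paper's route buys a cleaner uniform treatment of the many-fixed-points case via a single type $[p^s,3,1^{n-sp-3}]$.
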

\begin{proof}
Let $[\pi] \in [A_n]_0$, with $o(\pi)=p$. Then $T_{\pi}\in\mathcal{T}$, where $$\mathcal{T}=\{T_s=[1^{n-sp},p^s]: s\in\mathbb{N},s\leq n/p\}. $$ By Lemma \ref{lem:12} iii) it is enough to  show that $\mathcal{T}\subseteq \mathcal{T}(\widetilde{\Omega}_n).$ Recall now that, by Lemma \ref{C(T)}, $\mathcal{T}(\widetilde{\Omega}_n)$ is the set of vertices of a component of $P_0(\mathcal{T}(A_n)).$
Thus, by Lemma ~\ref{lem:16} and Lemma~\ref{lem:18}, it is enough to prove that for all $s\in\mathbb{N}$, with $s\leq n/p$, there exist $T^*\in \mathcal{T}(A_n)$ with $o(T^*)$ even or $o(T^*)=3$ and a path in $P_0(\mathcal{T}(A_n))$ between $T^*$ and $T_s$.
If $sp\leq n- 3$, consider $T=[p^s,3,1^{n-sp-3}]$. Since $T^3=T_s$,  we take $T^*=T^p=[3,1^{n-3}]$. If $sp\geq n-2$, since $p\leq n-3$, we have $s\geq 2$. Let first $s=2.$ Then $ n-2\leq 2p\leq n$ and, by $ p\neq {\frac{n}{2}, \frac{n-1}{2}}$, we get $2p=n-2.$
So we can consider $T^*=[2p,2]$, because $o(T^*)$ is even and
$(T^*)^2=T_2$. Next let $s=3$ and consider $T=[3p,1^{n-3p}]$; as $p\geq 5$, we have $T^3=T_3$ while $o(T^p)=3$ and we take $T^*=T^p.$
 Finally,  for $s\geq 4,$  we take $T^*=[(2p)^2,p^{s-4},1^{n-sp}]$ after having observed that $(T^*)^2=T_s$ and that $o(T^*)$ is even.
 \end{proof}
%------------------------------------------------------------------------------------%
\begin{lem}\label{lem:20}
Let $n\geq 11$ with $ n=2p$ or $n= 2p+1$, for some prime $p$. Let $[\pi]\in [A_n]_0$, with $o(\pi)=p$. Then, the following holds:
 \begin{itemize}\item[(i)] if $T_{\pi}=[1^{n-2p},p^2]$, then $[\pi]$ is an isolated vertex of $\widetilde{P}_0(A_n)$;
 \item[(ii)] if  $T_{\pi}= [1^{n-p},p]$, then  $[\pi]\in \widetilde{\Omega}_n$;
 \item[(iii)] $p\in V_{\widetilde{o}(\widetilde{\Omega}_n)}$.
   \end{itemize}
\end{lem}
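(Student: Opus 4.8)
The plan is to carry out all the work inside the proper power type graph $P_0(\mathcal{T}(A_n))$ and then transfer the conclusions back to $\widetilde{P}_0(A_n)$ by means of Lemmas \ref{isolated} and \ref{lem:12}. Two arithmetic facts will be used throughout: $n\in\{2p,2p+1\}$ together with $n\geq 11$ forces $p\geq 5$; and $2p\leq n<3p$, while a cycle of length $2p$ cannot occur in $A_n$ (it would give either a single $n$-cycle or an $n$-cycle together with one fixed point, in both cases an odd permutation). Hence the only multiple of $p$ that can appear as a cycle length of an element of $A_n$ is $p$ itself.

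For (i) I would first prove that $T:=[1^{n-2p},p^2]$ is an isolated vertex of $P_0(\mathcal{T}(A_n))$ and then apply Lemma \ref{isolated}(i). Since $o(T)=p$ is prime, $\gcd(a,o(T))\in\{1,o(T)\}$ for every $a\in\mathbb{N}$, so $T$ has no proper power, and the only possibility left to exclude is that $T=(T')^{a}$ is a proper power of some $T'\in\mathcal{T}_0(A_n)$, i.e.\ that $[1^n]\neq (T')^{a}\neq T'$. By the remark above, every part of $T'$ divisible by $p$ equals $p$, and $T'$ has at most one such part, since two of them would force $T'=[1^{n-2p},p^2]=T$, contradicting $(T')^{a}\neq T'$. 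Now a part $x$ of $T'$ contributes a $p$-cycle to $(T')^{a}$ only when $x/\gcd(a,x)=p$, which requires $p\mid x$, hence $x=p$, and then it contributes exactly one $p$-cycle; therefore $(T')^{a}$ contains at most one $p$-cycle, whereas $T$ contains two, a contradiction. Thus $T$ is isolated in $P_0(\mathcal{T}(A_n))$, and Lemma \ref{isolated}(i) gives that every $[\psi]\in[A_n]_0$ of type $T$, in particular $[\pi]$, is isolated in $\widetilde{P}_0(A_n)$.

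For (ii), by Lemma \ref{lem:12}(iii) it is enough to show $[1^{n-p},p]\in\mathcal{T}(\widetilde{\Omega}_n)$; since $[1^{n-4},2^2]\in\mathcal{T}(\widetilde{\Omega}_n)$ by Lemma \ref{lem:12}(ii), that lemma also identifies $\mathcal{T}(\widetilde{\Omega}_n)$ with the vertex set of the component of $P_0(\mathcal{T}(A_n))$ containing $[1^{n-4},2^2]$, so it suffices to join $[1^{n-p},p]$ in $P_0(\mathcal{T}(A_n))$ to a vertex of that component. I would take $S:=[1^{n-p-4},2^2,p]$; since $n\geq 2p$ and $p\geq 5$, this is a legitimate partition of $n$ (here $n-p-4\geq p-4>0$) with an even number of even parts, hence $S\in\mathcal{T}_0(A_n)$, and $o(S)=2p$ is even. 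A direct computation gives $S^{2}=[1^{n-p},p]$, and as $S^{2}\notin\{[1^n],S\}$ this is a proper power, so $\{[1^{n-p},p],S\}$ is a proper edge of $P_0(\mathcal{T}(A_n))$. By Lemma \ref{lem:16}, every vertex of $\widetilde{P}_0(A_n)$ of the even type $S$ lies in $\widetilde{\Omega}_n$, so $S\in\mathcal{T}(\widetilde{\Omega}_n)$, whence its neighbour $[1^{n-p},p]$ lies in the same component, i.e.\ $[1^{n-p},p]\in\mathcal{T}(\widetilde{\Omega}_n)$. Lemma \ref{lem:12}(iii) then places all vertices of type $[1^{n-p},p]$, and in particular $[\pi]$, in $\widetilde{\Omega}_n$. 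Finally (iii) follows at once: $A_n$ contains a $p$-cycle $\psi$, which has type $[1^{n-p},p]$ and order $p$, so $[\psi]\in V_{\widetilde{\Omega}_n}$ by (ii), and hence $p=\widetilde{o}([\psi])\in V_{\widetilde{o}(\widetilde{\Omega}_n)}$.

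The step I expect to be the main obstacle is (i): the combinatorial bookkeeping showing that no admissible type can acquire a second $p$-cycle under powering. The decisive inputs are the inequalities $2p\leq n<3p$ and the parity argument excluding a $2p$-cycle in $A_n$; once these are isolated, the count of $p$-cycles is elementary, and parts (ii) and (iii) become routine applications of the path-building lemmas already established.
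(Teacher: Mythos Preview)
Your proof is correct. The overall architecture---work in $P_0(\mathcal{T}(A_n))$ and pull back via Lemmas~\ref{isolated} and~\ref{lem:12}---is exactly the paper's, and part~(iii) is identical.

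There are two minor differences worth noting. In part~(i) the paper simply observes that $n-2p\leq 1$ and asserts that $[1^{n-2p},p^2]$ is neither a proper power nor has one among types admissible for $A_n$; you unpack this fully, including the parity argument ruling out a $2p$-cycle in $A_n$. Your expansion is correct (one wording slip: when $n=2p+1$ the forbidden permutation is an $(n-1)$-cycle with a fixed point, not an ``$n$-cycle with a fixed point'', but the sign is $-1$ either way). In part~(ii) you bridge $[1^{n-p},p]$ to $\widetilde{\Omega}_n$ through the even-order type $S=[1^{n-p-4},2^2,p]$ and Lemma~\ref{lem:16}, whereas the paper uses $T=[1^{n-p-3},3,p]$, reaching $\widetilde{\Omega}_n$ via the order-$3$ power $T^{p}$ and Lemma~\ref{lem:18}. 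Both auxiliary types are legitimate partitions of $n$ with admissible parity, both produce $[1^{n-p},p]$ as a proper power ($S^2$ resp.\ $T^3$), and both routes land in $\widetilde{\Omega}_n$ by previously established lemmas; the choice is a matter of taste.
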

\begin{proof}  If $T_{\pi}=[1^{n-2p},p^2]$, since $n-2p\leq 1$, $T_{\pi}$ admits no proper power and is not a proper power of a type admissible for $A_n.$ Thus, by Lemma \ref{isolated},  $[\pi]$ is isolated in $\widetilde{P}_0(A_n).$ Next let $T_{\pi}= [1^{n-p},p]$. By $n\geq 11$, we get $n-p\geq p\geq 5$ and thus, the type $T=[1^{n-p-3},3,p]\in\mathcal{T}(A_n)$ satisfies $o(T^p)=3$, while $T^3=T_{\pi}$. So, by Lemma \ref{lem:18}, $T^p\in \mathcal{T}(\widetilde{\Omega}_n)$ and, by Lemma \ref{C(T)}, $T_{\pi}\in \mathcal{T}(\widetilde{\Omega}_n).$ Finally, Lemma \ref{lem:12} iii), guarantees $[\pi]\in \widetilde{\Omega}_n$. In particular, $p=\widetilde{o}([\pi])\in V_{\widetilde{o}(\widetilde{\Omega}_n)}$.
\end{proof}
%------------------------------------------------------------------------------------%
\begin{lem}\label{lem:21}
For $n\geq 11$, all the vertices of $\widetilde{P}_0(A_n)$ apart those of order a prime $p$ such that $p\geq n-2$ or $p={\frac{n}{2}, \frac{n-1}{2}}$ are contained in $\widetilde{\Omega}_n$.
\end{lem}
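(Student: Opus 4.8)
The plan is to reduce to vertices of prime order and then invoke Lemmas \ref{lem:16}, \ref{lem:18} and \ref{lem:19}. Let $[\pi]\in[A_n]_0$ and set $m=o(\pi)$; assume $m$ is not a prime $p$ with $p\geq n-2$ or $p\in\{\frac n2,\frac{n-1}2\}$, and let us show $[\pi]\in\widetilde\Omega_n$. The basic observation is that for any prime $p\mid m$ the element $\sigma:=\pi^{m/p}$ has order $p$ and lies in the same component of $\widetilde P_0(A_n)$ as $[\pi]$: if $m=p$ then $\sigma=\pi$, while if $m>p$ then $o(\sigma)=p<m=o(\pi)$ forces $[\sigma]\neq[\pi]$, and since $\sigma$ is a power of $\pi$ the pair $\{[\pi],[\sigma]\}$ is a proper edge of $\widetilde P_0(A_n)$ by \eqref{lato}. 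Thus it is enough to pick a prime $p\mid m$ for which the order-$p$ vertex $[\sigma]$ can be placed inside $\widetilde\Omega_n$.

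First I would dispose of the two cheap cases. If $m$ is even, Lemma \ref{lem:16} already yields $[\pi]\in\widetilde\Omega_n$. If $m$ is odd and $3\mid m$, take $p=3$: by Lemma \ref{lem:18} the order-$3$ vertex $[\pi^{m/3}]$ lies in $\widetilde\Omega_n$, hence so does $[\pi]$. This reduces us to $\gcd(m,6)=1$, so that $m\geq 5$ and its least prime divisor $p$ satisfies $p\geq 5$.

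The heart of the proof is the claim that, under the non-exceptionality hypothesis on $m$, this least prime divisor $p$ already meets the requirements $5\leq p\leq n-3$ and $p\notin\{\frac n2,\frac{n-1}2\}$ of Lemma \ref{lem:19}; granting it, $[\sigma]\in\widetilde\Omega_n$ (and when $m=p$ this is just $[\pi]\in\widetilde\Omega_n$). If $m=p$, the claim is precisely the assumption that $m$ is not exceptional. If $m$ is composite, I would argue from the cycle type of $\pi$: since $p$ is the least prime divisor of $m$ and $m>p$, there is a prime $q\geq p$ with $pq\mid m$. If $q=p$ then $p^2\mid m$, so some cycle of $\pi$ has length divisible by $p^2$ and $n\geq p^2$. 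If $q>p$ then either some cycle has length divisible by $pq$, whence $n\geq pq$, or two distinct cycles have lengths divisible by $p$ and by $q$ respectively, whence $n\geq p+q$; moreover $q\geq p+2$ since $p<q$ are odd primes. In all three cases $p<n/2$, so (using $n\geq 11$) $p\leq n-3$ and $p\neq\frac n2$; and $p\neq\frac{n-1}2$, for $p=\frac{n-1}2$ would give $n=2p+1$, incompatible with $n\geq p^2$ and with $n\geq pq$, and forcing $q\leq p+1$ in the remaining subcase, against $q\geq p+2$. As an alternative for this last point, when $n\in\{2p,2p+1\}$ one may instead observe that $\sigma$ cannot have type $[1^{n-2p},p^2]$ while $m$ is composite, so $\sigma$ has type $[1^{n-p},p]$ and Lemma \ref{lem:20}(ii) applies directly.

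I expect the composite case to be the only genuine obstacle: the task is to keep the inequalities on the least prime divisor $p$ sharp enough to exclude the borderline values $p=\frac n2$ and $p=\frac{n-1}2$, which is exactly where the hypothesis $n\geq 11$ enters; everything else is an immediate appeal to Lemmas \ref{lem:16}, \ref{lem:18}, \ref{lem:19} and to \eqref{lato}.
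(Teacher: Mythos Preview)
Your proof is correct and follows the same strategy as the paper: pass to a prime-order power of $\pi$ and then invoke Lemmas \ref{lem:16}, \ref{lem:18}, \ref{lem:19}. The only notable difference is in the composite coprime-to-$6$ case: you reduce to the \emph{least} prime divisor $p$ and extract the sharp bound $p<\tfrac{n}{2}$ from the cycle structure (cleanly excluding $p=\tfrac{n}{2},\tfrac{n-1}{2}$), whereas the paper writes $o(\pi)=tpq$ with $q\geq 5$, passes to $\pi^{tp}$ of order $q$, and records only $q\leq n-3$ before citing Lemma \ref{lem:19}; your bookkeeping is actually a bit tidier at this step.
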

\begin{proof}
Let $[\pi]\in [A_n]_0$, with $o(\pi)$ not a  prime $p$ such that $p\geq n-2$ and not a prime $p$ such that $p={\frac{n}{2}, \frac{n-1}{2}}$. By Lemma \ref{lem:16}, we can assume that $o(\pi)$ is odd. Let first $o(\pi)=p$, for some prime $p.$ If $p=3$, then Lemma \ref{lem:18} applies. If $p\geq 5,$ then,  since $p\leq n-3$, Lemma \ref{lem:19} applies. Assume now that  $o(\pi)$ is composite. If $o(\pi)=3^k$ for some $k\in \mathbb{N}, k\geq 2$, then we have $o(\pi^{3^{k-1}})=3$ and, by Lemma \ref{lem:18}, we get $[\pi^{3^{k-1}}]\in\widetilde{\Omega}_n$, so that also $[\pi]\in \widetilde{\Omega}_n$.
Finally let $o(\pi)=tpq$,  with $p \geq 3, q\geq 5$ prime numbers and $t$ an odd positive integer. Since $o(\pi^t)=pq$, then  $T_{\pi}$ contains a term equal to $pq$ or two terms equal to $p$ and $q$ respectively. In any case, as $n\geq 11$, this implies $o(\pi^{tp})= q\leq n-3$ so that, by Lemma~\ref{lem:19}, $[\pi^{tp}]\in \widetilde{\Omega}_n$ and hence also $[\pi]\in \widetilde{\Omega}_n$.
\end{proof}

\begin{proof}[Proof of Theorem A ] (i) Combine Theorems \ref{thm:11}, \ref{thm:15} and \ref{thm:17}.

(ii) Let $n\geq 11$ be fixed and recall that
 $c_0(A_n)=\widetilde {c}_0(A_n)$. First note that, as $P\cap O_0(A_n)=\{p\in P: p\leq n\},$  we can reformulate Lemma \ref{lem:21} saying that all the vertices of  $\widetilde{P}_0(A_n)$ of order not belonging to \[B(n)=P\cap \Big\{n, n-1, n-2, {\frac{n}{2}, \frac{n-1}{2}}\Big\}\] are in $\widetilde{\Omega}_n.$ We refer to the numbers in $B(n)$ as to the critical orders for $n$.  By Lemma \ref{C(T)} ii), there exists a  unique component  $\Theta_n$ of $\mathcal{O}_0(A_n)$ containing the connected subgraph $\widetilde {o}(\widetilde{\Omega}_n)$, and we have $V_{\Theta_n}\supseteq V_{\widetilde {o}(\widetilde{\Omega}_n)}\supseteq O_0(A_n)\setminus B(n).$

If $n\notin A$, then $B(n)=\varnothing$ and thus all the vertices in $[A_n]_0$ belongs to $\widetilde{\Omega}_n$. So, by \eqref{connected}, $c_0(A_n)=\widetilde {c}_0(A_n)=c_0(\mathcal{T}(A_n))=c_0(\mathcal{O}(A_{n}))=1.$

Next let $n\in A$.
We examine all the possible  positions of $n$ with respect to the sets $P,\, P+1,$ $P+2,\,2P,\,2P+1$ whose union is $A.$ %.10.................................................................................
First of all  we show that $ P\cap(P+2)\cap (2P+1)=\varnothing$, proving that if $n\in P\cap(P+2)$, then ${\frac{n-1}{2}}\notin P.$
Let $n=p\in P$ and $n-2=p-2=q\in P$. Then, due to $n\geq 11$, we have  $p, q\geq 11$.  Since $3$ divides $n(n-1)(n-2)=p(p-1)q$,  we deduce that $3$ also divides  ${\frac{p-1}{2}=\frac{n-1}{2}}.$ The possibility $3={\frac{n-1}{2}}$ is excluded by $n\geq 11$ and thus ${\frac{n-1}{2}}\notin P.$

Since $2P,\,P+1$ are subsets of the even positive integers, while $P, P+2, 2P+1$ are subsets of the odd positive integers, we have nine cases to deal with. For all those cases we need to understand the number and nature of the components of $\widetilde {P}_0(A_n)$ and $P_0(\mathcal{T}(A_n))$
containing a vertex of order belonging to $B(n)$;  the number and nature of components of  $\mathcal{O}_0(A_n),$ different from  $\Theta_n$, containing a vertex belonging to $B(n)$. Once that is done, by Lemma \ref{component-quotient}, we immediately reach full information for the components of $P_0(A_n)$ too. The counting of $\widetilde {c}_0(A_n)$ is carried on applying Procedure \ref{procedureA_n}.

%.1.................................................................................
Let $n\in 2P\setminus (P+1)$. Then $n$ is even, $ {\frac{n}{2}}=p\in P$ and $n-1\notin P$, so that the only critical order is $p.$  But, by Lemma \ref{lem:20}, $p$ is a vertex of $\widetilde {o}(\widetilde{\Omega}_n)$ and thus $V_{\Theta_n}=O_0(A_n)=V_{\widetilde {o}(\widetilde{\Omega}_n)}$  and $c_0(\mathcal{O}(A_{n}))=1.$ To examine the graph  $\widetilde {P}_0(A_n)$,
let $[\pi]\in[A_n]_0$ with $o(\pi)= p.$ Then $T_{\pi}\in \{[p^2], [1^p,p]\}$ and, by Lemma \ref{lem:20} the vertices of type $[1^p,p]$ are in  $\widetilde{\Omega}_n,$ while those of type $T_1=[p^2]$ are isolated.  Thus $\widetilde {c}_0(A_n)_{T_1}=\mu_{[p^2]}[A_{n}]$ and hence
$\widetilde {c}_0(A_n)=\frac{4(n-1)(n-3)!}{n}+1.$ Moreover $c_0(\mathcal{T}(A_n))=2.$

%.2.................................................................................
Let $n\in 2P\cap (P+1)$. Then $n$ is even, ${\frac{n}{2}}=p\in P$ and $n-1=q\in P$, so that the critical orders are the primes $p$ and $q.$ By Lemmas \ref{lem:20} and \ref{p-isolated}, we have that $p\in V_{\widetilde {o}(\widetilde{\Omega}_n)}\subseteq V_{\Theta_n}$ while $q$ is isolated in $\mathcal{O}_0(A_{n})$. Thus $c_0(\mathcal{O}(A_{n}))=2$ and, by Lemma \ref{isolated}, $V_{\Theta_n}=O_0(A_n)\setminus \{q\}=V_{\widetilde {o}(\widetilde{\Omega}_n)}.$
To examine the graph  $\widetilde {P}_0(A_n)$, let first
 $[\pi]\in[A_n]_0$  with  $o(\pi)=p$. As in the previous case, $T_{\pi}\in \{T_1=[p^2], T=[1^p,p]\}$ and $T\in \mathcal{T}(\widetilde{\Omega}_n)$ while $\widetilde {c}_0(A_n)_{T_1}=\mu_{[p^2]}[A_{n}].$
 Moreover each $[\pi]\in[A_n]_0$  with
 $o(\pi)=q$ is isolated and $T_{\pi}=[1,q]$. So, by Lemma \ref{isolated}, $T_2=[1,q]$  is isolated in $P_0(\mathcal{T}(A_n))$ and $\widetilde {c}_0(A_n)_{T_2}=n(n-3)!$. It follows that $\widetilde {c}_0(A_n)=\frac{4(n-1)(n-3)!}{n}+n(n-3)!+1$ and $c_0(\mathcal{T}(A_n))=3.$

%.3.................................................................................
Let $n\in (P+1)\setminus 2P$. Here  $n$ is even, ${\frac{n}{2}}\notin P$ and $n-1=p\in P$, so that the only critical order is the prime $p$. By Lemma \ref{p-isolated}, $p$ is isolated in $\mathcal{O}_0(A_{n})$ and so $c_0(\mathcal{O}(A_n))=2.$ Thus, by Lemma \ref{isolated}, we also get $V_{\Theta_n}=O_0(A_n)\setminus \{p\}=V_{\widetilde {o}(\widetilde{\Omega}_n)}.$ Moreover, each $[\pi]\in[A_n]_0$ with $o(\pi)= p$ is isolated in $\widetilde {P}_0(A_n).$ Thus  $T_1=[1,p]$ is isolated in $P_0(\mathcal{T}(A_n))$ and
 $\widetilde {c}_0(A_n)_{T_{1}}=n(n-3)!$, so that  $\widetilde {c}_0(A_n)=n(n-3)!+1$ and $c_0(\mathcal{T}(A_n))=2.$

%.4................................................................................
Let $n\in P\setminus [(P+2)\cup (2P+1)]$. Here the only critical order is $n$, which by Lemma \ref{p-isolated} is isolated, so that $c_0(\mathcal{O}(A_n))=2.$ Then, by Lemma \ref{isolated}, $V_{\Theta_n}=O_0(A_n)\setminus \{n\}=V_{\widetilde {o}(\widetilde{\Omega}_n)}.$
For $T_1=[n]$, we get
 $\widetilde {c}_0(A_n)_{T_{1}}=(n-2)!$, so that $\widetilde {c}_0(A_n)=(n-2)!+1$ and $c_0(\mathcal{T}(A_n))=2.$

%.5.................................................................................
Let $n\in (2P+1)\setminus [P\cup (P+2)]$. Here the only critical order is the prime $p={\frac{n-1}{2}}$, which by Lemma \ref{lem:20}, is a vertex of $\widetilde {o}(\widetilde{\Omega}_n)$. Thus $c_0(\mathcal{O}(A_n))=1$ and $V_{\Theta_n}=O_0(A_n)=V_{\widetilde {o}(\widetilde{\Omega}_n)}.$ Moreover $T_1=[1,p^2]$ is isolated in $P_0(\mathcal{T}(A_n))$ and $\widetilde {c}_0(A_n)_{T_{1}}=\mu_{T_1}[A_{n}]$. Thus
$\widetilde {c}_0(A_n)={\frac{4n!}{(n-1)^2(n-3)}}+1$ and
$c_0(\mathcal{T}(A_n))=2.$

%.6.................................................................................
Let $n\in (P+2)\setminus [P\cup (2P+1)]$. Here the only critical order is the prime $n-2.$ By Lemmas \ref{p-isolated} and \ref{isolated}, $n-2$ is isolated in $\mathcal{O}_0(A_n)$  while the type $[1^2,n-2]$ is isolated in  $P_0(\mathcal{T}(A_n))$. Then $c_0(\mathcal{O}(A_n))=2,$ $V_{\Theta_n}=V_{\widetilde {o}(\widetilde{\Omega}_n)},$
$\widetilde {c}_0(A_n)={\frac{n(n-1)(n-4)!}{2}}+1$ and $c_0(\mathcal{T}(A_n))=2.$

%.7.................................................................................
Let $n\in P\cap (P+2)$. Here the critical orders are the primes $n, n-2$, both isolated in $\mathcal{O}_0(A_n)$, by Lemma \ref{p-isolated}. Moreover the types $[n], [1,n-1]$ are isolated in $P_0(\mathcal{T}(A_n))$.
Thus $c_0(\mathcal{O}(A_n))=3,$  $\widetilde {c}_0(A_n)=(n-2)!+{\frac{n(n-1)(n-4)!}{2}}+1$ and $c_0(\mathcal{T}(A_n))=3.$ Applying Lemma \ref{isolated} we also get $V_{\Theta_n}=V_{\widetilde {o}(\widetilde{\Omega}_n)}.$

%.8.................................................................................
Let $n\in P\cap (2P+1)$.  The critical orders are the primes $n,{\frac{n-1}{2}}.$ By Lemmas \ref{p-isolated}, \ref{isolated} and \ref{lem:20}, with the usual arguments, we get $c_0(\mathcal{O}(A_n))=2,$  $V_{\Theta_n}=V_{\widetilde {o}(\widetilde{\Omega}_n)},$ $\widetilde {c}_0(A_n)=(n-2)!+\frac{4n(n-2)(n-4)!}{n-1}+1$ and
$c_0(\mathcal{T}(A_n))=3.$ Moreover the types $[n]$ and $[1,(\frac{n-1}{2})^2]$ are isolated in $P_0(\mathcal{T}(A_n))$.

%.9.................................................................................
Let $n\in (P+2)\cap (2P+1)$.  The critical orders are the primes $n-2$ and $\frac{n-1}{2}$. By Lemmas \ref{p-isolated} and \ref{lem:20}, $n$ is isolated in $\mathcal{O}_0(A_n)$ while $\frac{n-1}{2}\in V_{\widetilde {o}(\widetilde{\Omega}_n)}.$ Thus $c_0(\mathcal{O}(A_n))=2$ and $V_{\Theta_n}=V_{\widetilde {o}(\widetilde{\Omega}_n)}.$ Moreover the types $[1^2,n-2]$ and $[1,(\frac{n-1}{2})^2]$ are isolated in $P_0(\mathcal{T}(A_n))$, so that
  $\widetilde {c}_0(A_n)=\frac{n(n-1)(n-4)!}{2}+\frac{4n(n-2)(n-4)!}{n-1}+1$ and $c_0(\mathcal{T}(A_n))=3.$
  \end{proof}

\begin{proof}[Proof of Theorem B ]
The case by case analysis in the proof of  Theorem A has shown many facts. In particular, $\widetilde{\Omega}_n$ is the only possible component of $\widetilde{P}_0(S_n)$ not reduced to an isolated vertex and $V_{\Theta_n}=V_{\widetilde {o}(\widetilde{\Omega}_n)}.$ Since $\widetilde {o}$ is complete, by \cite[Proposition 5.2]{bub}, we then deduce $\Theta_n=\widetilde {o}(\widetilde{\Omega}_n).$

Call now main component of $P_0(A_n)$,  $\widetilde{P}_0(A_n)$, $P_0(\mathcal{T}(A_n))$ and $\mathcal{O}_0(A_n)$, respectively, the component $\Omega_n$  such that $\pi(\Omega_n)=\widetilde{\Omega}_n$ defined in Corollary \ref{complete}, $\widetilde{\Omega}_n$,  $\widetilde{t}(\widetilde{\Omega}_n)$ and
$\widetilde {o}(\widetilde{\Omega}_n)$. Note that $V_{\widetilde{t}(\widetilde{\Omega}_n)}=\mathcal{T}(\widetilde{\Omega}_n)$ and that $\widetilde{t}(\widetilde{\Omega}_n)$ is a component  because $\widetilde{t}$ is a pseudo-covering.
By Corollary \ref{complete}, no main component is complete.% and, in particular, is not an isolated vertex.

We show  now that, for every $X_0\in \{\widetilde{P}_0(A_n),\ P_0(\mathcal{T}(A_n))\}$, all the components except the main are isolated vertices of order a prime $p\in B(n).$ Let $\widetilde{C}\in  \widetilde{\mathcal{C}}_0 (A_n)$, with $\widetilde{C}\neq \widetilde{\Omega}_n$. Choose $[\psi]\in V_{\widetilde{C}}$, so that $\widetilde{C}=C([\psi]).$ Since $\widetilde{t}$ is pseudo-covering, by Proposition \ref{hom-con}\,(ii)\,(b), we get $\widetilde{t}(\widetilde{C})=\widetilde{t}(C([\psi]))=C(T_{\psi})$. But, from $[\psi]\notin V_{\widetilde{\Omega}_n}$ we deduce, by Lemma \ref{lem:21}, that $o(\psi)\in B(n)$ and thus, by our case by case  analysis, $T_{\psi}$ is isolated. By Lemma \ref{isolated}\,(i), this is equivalent to $[\psi]$ isolated and so $\widetilde{C}$ is reduced to the vertex $[\psi]$.
Next let $C\in \mathcal{C}_0(\mathcal{T}(A_n))$, with $C\neq \widetilde{t}(\widetilde{\Omega}_n)$. By Proposition \ref{hom-con}\,(ii)\,(b), we have $C=\widetilde{t}(\widetilde{C})$ for a suitable $\widetilde{C}\in  \widetilde{\mathcal{C}}_0 (A_n)$ and $\widetilde{C}\neq \widetilde{\Omega}_n$ and, by the above case, we know that $\widetilde{C}$ is an isolated vertex $[\psi]$ of $\widetilde{P}_0(A_n)$. Thus, by Lemma \ref{isolated}\,(i),  $T_{\psi}\in V_C$ is an isolated vertex of $P_0(\mathcal{T}(A_n))$ and so $C$ is reduced to the vertex $T_{\psi}.$

Now, to obtain that each component of $P_0(A_n)$ different from $\Omega_n$
is a complete graph on $p-1$ vertices it is enough to invoke Lemma \ref{component-quotient}.
Finally observe that the case by case analysis in the proof of  Theorem A has shown that $|B(n)|\leq 2,$ which says that the primes involved in components different from the main one are at most $2$ for all the graphs in $\mathcal{G}_0.$

\end{proof}

 \begin{cor} \label{final} The minimum $n\geq 4$ such that $P(A_n)$ is $2$-connected  is $n=16$. There exists infinitely many $n$ such that $P(A_n)$ is $2$-connected.
 \end{cor}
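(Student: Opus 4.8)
The plan is to deduce everything from Corollary C, which tells us that for $n\geq 4$ the power graph $P(A_n)$ is $2$-connected precisely when $n\notin A$, where $A=P\cup(P+1)\cup(P+2)\cup(2P)\cup(2P+1)$ as in \eqref{A}. Thus the statement reduces to a purely elementary claim about the set $A$: that the smallest integer $n\geq 4$ avoiding $A$ is $16$, and that infinitely many integers avoid $A$.

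For the minimality part I would run through $n=4,\dots,15$ and in each case exhibit a membership witness: $4,6,10\in 2P$; $5,7,11,13\in P$; $8,12,14\in P+1$; $9,15\in P+2$. This shows $\{4,\dots,15\}\subseteq A$. Then I would check $16\notin A$ directly: $16$ is composite, both $15=16-1$ and $14=16-2$ are composite, $8=16/2$ is composite, and $16$ is even so it is not of the form $2p+1$; hence $16$ lies in none of the five sets whose union is $A$, and Corollary C gives that $P(A_{16})$ is $2$-connected.

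For the infinitude part, the idea is to produce an explicit arithmetic progression all of whose terms avoid $A$, by choosing a common modulus so that divisibility by the small primes $2$ and $3$ simultaneously destroys the primality of $n$, $n-1$, $n-2$ and $n/2$. Concretely I would take $n=12t+4$ for $t\geq 1$ and verify the five conditions: $n=4(3t+1)$, $n-1=3(4t+1)$, $n-2=2(6t+1)$ and $n/2=2(3t+1)$ are all composite (each displayed factor being at least $2$ since $t\geq 1$), and $n$ is even, so $n\notin 2P+1$. Therefore $12t+4\notin A$ for every $t\geq 1$, and Corollary C yields infinitely many $n$ with $P(A_n)$ $2$-connected; note $t=1$ recovers $n=16$, consistent with the first part.

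The only mild obstacle is the infinitude statement, where the progression must be chosen with care: a naive choice (say, $n$ merely even) controls $n$, $n-2$ and $n/2$ but leaves the odd number $n-1$ possibly prime, so one needs the extra congruence $n\equiv 4\pmod{12}$ to force $3\mid n-1$. Everything else is a finite verification.
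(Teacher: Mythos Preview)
Your proof is correct and follows essentially the same strategy as the paper: reduce via the characterization of $2$-connectivity to an elementary arithmetic claim about the set $A$, verify $\{4,\dots,15\}\subseteq A$ and $16\notin A$ case by case, and then exhibit an explicit infinite family avoiding $A$. The only substantive difference is the choice of family: the paper uses $\{4k^2:k\geq 2\}$, factoring $n-1=(2k-1)(2k+1)$ to kill $P+1$, whereas you use the arithmetic progression $\{12t+4:t\geq 1\}$, forcing $3\mid n-1$ instead. Your family is denser and arguably more systematic; the paper's works with a single identity but yields a sparser set.

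One structural caveat: in the paper, Corollary~\ref{final} is proved \emph{before} Corollary~C, and the ``infinitely many'' clause of Corollary~C in fact cites Corollary~\ref{final}. So invoking Corollary~C wholesale here risks circularity. This is harmless in practice, since you only use the equivalence (ii)$\Leftrightarrow$(v), which follows directly from Theorem~A (Tables~1 and~2) and is established independently of Corollary~\ref{final}; but it would be cleaner to cite Theorem~A directly, as the paper does.
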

 \begin{proof}  Let first $4\leq n\leq 15$. Then we have $4\leq n\leq 10$,  or $n\geq 11$ and $n\in A$. Thus, by Theorem A, we get $c_0(A_{n})>1$. Moreover, since $16\in \mathbb{N}\setminus A$, Theorem A implies that $c_0(A_{16})=1$. To prove that $\mathbb{N}\setminus A$ is infinite we show that it contains $ B=\{ 4k^2: k\in  \mathbb{N}, k \geq 2 \}.$
 Let $n = 4k^2\in B$. Since $n$ is even and $n\geq 16$, we have $n\notin P\cup (P+2)\cup (2P+1)$. Moreover ${\frac{n}{2}}=2k^2$ is not a prime so that $n\notin 2P.$ Finally, we have $n-1=(2k -1)(2k + 1)$, so that $n\notin P+1.$ Hence $n\in \mathbb{N}\setminus A$.
 \end{proof}
 Note that there exists also some $n$ odd  such that $P(A_n)$ is $2$-connected. For instance this happens for $n=51.$
 \begin{proof} [Proof of Corollary C]  It is immediate by checking that $c_0(\mathcal{T}(A_n))=1$ if and only if  $c_0(A_n)=1$, using Table $1$ and Table $2$,  and noting that the natural numbers $n$ such that $4\leq n\leq 10$ are contained in $A.$ To argue about the infinitely many $n\in \mathbb{N}$ such that $P(A_n)$ is $2$-connected, we use Corollary \ref{final}.
 \end{proof}
 
We finally give a more concise overview on the number of components of the graph $\mathcal{O}_0(A_n).$
\begin{cor}\label{thm:23} The values of $c_0(\mathcal{O}(A_n))$ for $n\geq 4$, with $ n\neq 6,$ are given in Table \ref{eqtable6} below.

\begin{table}[ht]
\caption{$c_0(\mathcal{O}(A_n))$, for $n\geq 4,\  n\neq 6$.}\label{eqtable6}
\begin{center}
\begin{tabular}{|c|c|}

\hline
\vspace{-3mm}
& \\

$c_0(\mathcal{O}(A_n))$  & $n\geq 4,\  n\neq 6$\\
\vspace{-3mm}
& \\

\hline
\vspace{-3mm}
& \\

$1$ & $n\notin P\cup (P+1)\cup (P+2)$\\
\vspace{-3mm}
& \\

\hline
\vspace{-3mm}
& \\

$2$ & $n\in [P\setminus (P+2)]\cup [(P+2)\setminus P]\cup (P+1)$\\
\vspace{-3mm}
& \\

\hline
\vspace{-3mm}
& \\

$3$ & $n\in P\cap (P+2)$\\

\hline
\end{tabular}
\end{center}
\end{table}
\end{cor}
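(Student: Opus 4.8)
The plan is to obtain Corollary~\ref{thm:23} as a concise repackaging of Corollary~D, followed by one elementary set-theoretic identity; I will also indicate the fully self-contained route through the tables of Theorem~A. Since $\mathcal{O}(A_n)\in\mathcal{G}$, the graph $\mathcal{O}(A_n)$ is $2$-connected exactly when $\mathcal{O}_0(A_n)$ is connected, i.e.\ when $c_0(\mathcal{O}(A_n))=1$, and Corollary~D states that $c_0(\mathcal{O}(A_n))\le 3$ in all cases, that $c_0(\mathcal{O}(A_n))=1$ if and only if $n=3$ or none of $n,n-1,n-2$ is prime, and that $c_0(\mathcal{O}(A_n))=3$ if and only if $n=6$ or both $n$ and $n-2$ are prime. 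For $n\ge 4$ with $n\ne 6$ the two exceptional values $n=3$ and $n=6$ are void, so directly $c_0(\mathcal{O}(A_n))=1$ iff $n\notin P\cup(P+1)\cup(P+2)$ and $c_0(\mathcal{O}(A_n))=3$ iff $n\in P\cap(P+2)$; since the only other available value is $2$, every remaining $n$ gives $c_0(\mathcal{O}(A_n))=2$.

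It then remains only to identify the set of these remaining $n$, namely $\bigl(P\cup(P+1)\cup(P+2)\bigr)\setminus\bigl(P\cap(P+2)\bigr)$, with the set $[P\setminus(P+2)]\cup[(P+2)\setminus P]\cup(P+1)$ from the middle row of Table~\ref{eqtable6}. For the inclusion $\supseteq$ one needs only that no three consecutive integers $n-2,n-1,n$ are all prime (immediate for $n\le 4$ and, for $n\ge 5$, because one of them is a multiple of $3$ exceeding $3$), so that $n\in P+1$ already forces $n\notin P\cap(P+2)$. For $\subseteq$ one splits on whether $n\in P$, and if not on whether $n-2\in P$, using that if some member of $\{n-2,n-1,n\}$ is prime while neither $n$ nor $n-2$ is, then $n-1$ must be prime. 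This disposes of all three rows.

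For the alternative, self-contained argument one reads $c_0(\mathcal{O}(A_n))$ directly off Table~\ref{eqtable1} for the finitely many $n$ with $4\le n\le 10$, $n\ne 6$, and off the ten rows of Table~\ref{eqtable2} for $n\ge 11$, translating each row hypothesis --- which may involve $\frac{n}{2}$ or $\frac{n-1}{2}$ --- into a condition on $n,n-1,n-2$ only. The only delicate point, and the one I would be most careful with, is this hidden parity bookkeeping in Table~\ref{eqtable2}: for instance $\frac{n}{2}\in P$ forces $n$ even with $n-2$ composite (as $n\ge 11$), $\frac{n-1}{2}\in P$ forces $n$ odd hence $n-1$ composite, and $n\notin A$ in particular implies $n\notin P\cup(P+1)\cup(P+2)$; getting these constraints right is precisely what aligns each row of Table~\ref{eqtable2} with the correct value in Table~\ref{eqtable6}. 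There is no genuine conceptual obstacle, the substance being already contained in Corollary~D.
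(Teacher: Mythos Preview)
Your alternative route---reading $c_0(\mathcal{O}(A_n))$ directly off Tables~\ref{eqtable1} and~\ref{eqtable2}---is exactly what the paper does; its entire proof is the single sentence ``A check on the Tables $1$ and $2$.'' So that part of your proposal is correct and matches the paper.

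Your primary route through Corollary~D, however, is circular within the paper's logical structure: the paper's proof of Corollary~D reads ``It follows from Theorem~\ref{thm:11} and Corollary~\ref{thm:23}.'' That is, Corollary~D is deduced \emph{from} the present corollary, not the other way round. The content is of course equivalent---Corollary~D and Corollary~\ref{thm:23} are two presentations of the same information extracted from Theorem~A---but you cannot invoke~D here without first establishing it independently, and the independent argument is precisely the table check you relegated to ``alternative''. Simply swap the emphasis: the table verification is the proof, and the set-theoretic repackaging you wrote out (which is correct and cleanly argued) is then the derivation of Corollary~D from Corollary~\ref{thm:23}, not vice versa.
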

\begin{proof}  A check on the Tables $1$ and $2$.

\end{proof}

 \begin{proof} [Proof of Corollary D]  It follows from Theorem \ref{thm:11} and Corollary \ref{thm:23}.
 \end{proof}

 \vskip 0.4 true cm
 {\bf Acknowledgements.}  The first author is partially supported by GNSAGA of INdAM.
\vskip 0.4 true cm

 %-----------------------------------------------------%

%-----------------------------------------------------------------------------
%-----------------------------------------------------------------------------
\bigskip
\bigskip

{\footnotesize \pn{\bf Daniela~Bubboloni}\; \\  {Dipartimento di Matematica e Informatica U.Dini},\\
{ Viale Morgagni 67/a},{ 50134 Firenze, Italy}\\
{\tt Email:  daniela.bubboloni@unifi.it}\\

{\footnotesize \pn{\bf Mohammad~A.~Iranmanesh}\; \\ {Department of
Mathematics}, {Yazd University,\\  89195-741,} { Yazd, Iran}\\
{\tt Email: iranmanesh@yazd.ac.ir}\\

{\footnotesize \pn{\bf Seyed~M.~Shaker}\; \\  {Department of
Mathematics}, {Yazd University,\\  89195-741,} { Yazd, Iran}\\
{\tt Email: seyed$_{-}$shaker@yahoo.com}\\

\end{document}